\definecolor{mygrey}{gray}{0.70}
\definecolor{mygreen}{rgb}{0,.75,0}
\definecolor{myred}{rgb}{1,0,0}
\definecolor{orange}{rgb}{1,.5,0}
\numberwithin{equation}{section}
\newtheorem{thm}[equation]{Theorem}
\newtheorem{cor}[equation]{Corollary}
\newtheorem{lem}[equation]{Lemma}
\newtheorem{prop}[equation]{Proposition}
\theoremstyle{definition}
\newtheorem{defn}[equation]{Definition}
\theoremstyle{remark}
\newtheorem{rem}[equation]{Remark}
\newtheorem{ex}[equation]{Example}
\newcommand{\thmref}[1]{Theorem~\ref{#1}}
\newcommand{\propref}[1]{Proposition~\ref{#1}}
\newcommand{\lemref}[1]{Lemma~\ref{#1}}
\newcommand{\corref}[1]{Corollary~\ref{#1}}
\newcommand{\defref}[1]{Definition~\ref{#1}}
\newcommand{\remref}[1]{Remark~\ref{#1}}
\newcommand{\exref}[1]{Example~\ref{#1}}
\newcommand{\figref}[1]{Figure~\ref{#1}}
\newcommand{\secref}[1]{Section~{\bf\ref{#1}}}
\newcommand\A{\mathcal A}
\renewcommand\a{\alpha}
\renewcommand\b{\beta}
\newcommand\bA{{\breve\A}}
\newcommand\bs{\backslash}
\newcommand\bS{\breve\S}
\newcommand\C{\mathcal C}
\newcommand\D{\mathcal D}
\renewcommand\d{\delta}
\newcommand\De{\Delta}
\newcommand\diag{\operatorname{diag}}
\newcommand\e{\varepsilon}
\newcommand\E{\mathcal E}
\newcommand\Edges{\mathsf{Edges}}
\newcommand\f{\varphi}
\newcommand\g{\gamma}
\newcommand\G{\Gamma}
\newcommand\h{\zeta}
\newcommand\HBall{\operatorname{HBall}}
\newcommand\HH{\mathbf H}
\newcommand\Hor{\operatorname{Hor}}
\renewcommand\l{\lambda}
\newcommand\La{\Lambda}
\newcommand\la{\langle}
\newcommand\m{\mathfrak m}
\renewcommand\O{\Omega}
\renewcommand\o{\omega}
\newcommand\ov{\overline}
\newcommand\Paths{\mathsf{Paths}}
\newcommand\pt{\partial}
\renewcommand\r{\rho}
\renewcommand\Re{\mathbb R}
\newcommand\ra{\rangle}
\renewcommand\S{\mathcal S}
\newcommand\s{\sigma}
\newcommand\Si{\Sigma}
\newcommand\supp{\mathop{\text{\rm supp}}}
\newcommand\T{\mathcal T}
\renewcommand\t{\tau}
\newcommand\Th{\Theta}
\renewcommand\th{\theta}
\newcommand\vs{\varsigma}
\newcommand\wh{\widehat}
\newcommand\wt{\widetilde}
\newcommand\X{\mathcal X}
\newcommand\Z{\mathbb Z}
\begin{document}

\phantom{z}

\vskip -2cm

\title[Ergodic properties of boundary actions]{Ergodic properties of
boundary actions and Nielsen--Schreier theory}

\author[R. Grigorchuk]{Rostislav Grigorchuk}
\address{Mathematics Department, Texas A{\&}M University, College Station, TX 77843-3368, USA}
\email{grigorch@math.tamu.edu}

\thanks{The authors acknowledge support of the Fonds National Suisse de la Recherche
Scientifique and of the Swedish Royal Academy of Sciences. The authors are
grateful to the Erwin Schr\"odinger Institute (ESI) in Vienna for the
hospitality and the excellent working conditions during the ``Amenability
2007'' programme. The first author was also supported by the NSF grants
DMS-0456185, DMS-0600975, whereas the second author was partially supported by
funding from the Canada Research Chairs program. The first and the second
authors also express their gratitude to the Institut des Hautes \'Etudes
Scientifiques in Bures-sur-Yvette}

\author[V. Kaimanovich]{Vadim A. Kaimanovich}
\address{Department of Mathematics and Statistics, University of Ottawa, 585 King Edward, Ottawa ON, K1N 6N5, Canada}
\email{vkaimano@uottawa.ca, vadim.kaimanovich@gmail.com}

\author[T. Nagnibeda]{Tatiana Nagnibeda}
\address{Section de math\'ematiques, Universit\'e de Gen\`eve, 2-4, rue du Li\`evre, c.p. 64 1211 Gen\`eve 4,
Switzerland}
\email{tatiana.smirnova-nagnibeda@unige.ch}

\date{\today}

\begin{abstract}
We study the basic ergodic properties (ergodicity and conservativity) of the
action of an arbitrary subgroup $H$ of a free group $F$ on the boundary $\pt
F$ with respect to the uniform measure. Our approach is geometrical and
combinatorial, and it is based on choosing a system of Nielsen--Schreier
generators in $H$ associated with a geodesic spanning tree in the Schreier
graph $X=H\bs F$. We give several (mod 0) equivalent descriptions of the Hopf
decomposition of the boundary into the conservative and the dissipative parts.
Further we relate conservativity and dissipativity of the action with the
growth of the Schreier graph $X$ and of the subgroup $H$ ($\equiv$ cogrowth of
$X$), respectively. We also construct numerous examples illustrating
connections between various relevant notions.
\end{abstract}

\maketitle

\thispagestyle{empty}

\vskip-2cm

{\footnotesize\tableofcontents}

\section*{Introduction}

In 1921 Jacob Nielsen \cite{Nielsen21} proved that any finitely generated
subgroup of a free group is itself a free group. His proof was based on a
rewriting procedure which allows one to reduce an arbitrary finite system of
elements of a free group to a system of free generators. Since then Nielsen's
method has become one of the main tools in the combinatorial group theory
\cite{Magnus-Karrass-Solitar76,Lyndon-Schupp01}. It is used in the study of
the group of automorphisms of a free group, for solving equations in free
groups and in numerous other applications. Its scope is by no means restricted
to free groups and extends to the combinatorial group theory at large,
$K$-theory and topology.

In 1927 Nielsen's result was extended by Otto Schreier \cite{Schreier27} to
arbitrary subgroups in another seminal work (where, in particular, what is
currently known as Schreier graphs was introduced). Under the name of the
Nielsen--Schreier theorem it is now one of the bases of the theory of infinite
groups. Schreier's method is at first glance quite different from Nielsen's
and uses families of coset representatives (transversals). That Nielsen and
Schreier actually arrived at essentially the same generating systems became
clear much later and was proved in \cite{Hall-Rado48,Karrass-Solitar58}.

In this work we show that the Nielsen--Schreier theory is useful in the
ergodic theory, and our main result is its application to the study of the
ergodic properties of the boundary action of arbitrary subgroups of a finitely
generated free group. On the other hand, our point of view ``from infinity''
(based on using dynamical invariants of the boundary action) sheds new light
on geometry of Schreier graphs and associated subgroups.

\vskip5mm

The boundary theory occupies an important place in various mathematical
fields: geometric group theory, rigidity theory, theory of Kleinian groups,
potential analysis, Markov chains, to name just a few. The free group is one
of the central objects in the study of boundaries of groups. Its simple
combinatorial structure makes of it a convenient test-case which contributes
to the understanding of general concepts, both in the group-theoretic (as the
free group is the universal object in the category of discrete groups) and
geometric (as its Cayley graph, the homogeneous tree, is a discrete analogue
of the constant curvature hyperbolic space) frameworks.

There exist many different boundaries of a group corresponding to various
compactifications: the space of ends, the Martin boundary, the visual
boundary, the Busemann boundary, the Floyd boundary, etc. There is also a
measure-theoretical notion of the Poisson(--Furstenberg) boundary, which is
the one especially important for the present study. In the case of the free
group $F$ freely generated by a finite set $\A$, all these notions coincide,
and the \emph{boundary} $\pt F$ can be realized as the space $\bA_r^\infty$ of
infinite freely reduced words in the alphabet $\bA=\A\cup \A^{-1}$. The action
of the group on itself extends by continuity to a continuous action on $\pt
F$.

The choice of the generating set $\A$ determines a natural \emph{uniform}
probability measure $\m$ on $\pt F$ which is quasi-invariant under the action
of $F$. This measure can also be interpreted in a number of other ways.
Namely, as the measure of maximal entropy of the unilateral Markov shift in
the space of infinite irreducible words, as a conformal density (Patterson
measure), or as the hitting ($\equiv$ harmonic) measure of the simple random
walk on the group. In the latter interpretation the measure space $(\pt F,\m)$
is actually isomorphic to the Poisson boundary of the random walk, and it is
this interpretation that plays an important role in our work.

\medskip

The main goal of the present paper is to study the basic ergodic properties,
i.e., ergodicity and conservativity, of the action of an arbitrary subgroup
$H\le F$ on the boundary $\pt F$ with respect to the measure $\m$. Our
principal results are:

\begin{itemize}
    \item
An explicit combinatorial description of the Hopf decomposition of the
boundary action in terms of what we call the Schreier limit set (\thmref{th:1}
and \thmref{th:2});
    \item
Identification of the conservative part of the Hopf decomposition with the
horospheric limit set (\thmref{th:coinc});
    \item
A sufficient condition of complete dissipativity of the boundary action
(\thmref{th:4}) and a necessary and sufficient condition of its conservativity
(\thmref{th:cor3}) in terms of the growth of the group $H$ ($\equiv$ the
cogrowth of the associated Schreier graph $X$) and of $X$, respectively;
    \item
Numerous new examples illustrating and clarifying the interrelations between
various conditions (\secref{sec:exincl} and \secref{sec:exx}).
\end{itemize}

On the other hand, we expect our approach to be useful for purely algebraic
problems as well. For instance, our analysis of the ergodic properties of the
boundary action allows us to give a conceptual proof of an old theorem of
Karrass--Solitar on finitely generated subgroups of a free group
(\remref{rem:KarSol}).

\medskip

Recall that an action of a countable group is called \emph{ergodic} with
respect to a quasi-invariant measure if it has no non-trivial invariant sets.
Any action (on a Lebesgue space) admits a unique \emph{ergodic decomposition}
into its \emph{ergodic components}. An action is called \emph{conservative} if
it admits no non-trivial \emph{wandering set} (i.e, such that its translations
are pairwise disjoint). There is always a maximal wandering set, and the union
of its translations is called the \emph{dissipative part} of the action. Any
action admits the so-called \emph{Hopf decomposition} into the conservative
and dissipative parts. These parts can also be described as the unions of all
the purely non-atomic, and, respectively, of all the atomic ergodic
components. It is important to keep in mind that the Hopf decomposition (as
well as other measure theoretic notions) is defined (mod 0), i.e., up to
measure $0$ subsets.

It is pretty straightforward to see that ergodicity of the boundary action is
equivalent to the \emph{Liouville property} of the simple random walk on the
Schreier graph $X$ (i.e., to the absence of non-constant bounded harmonic
functions on $X$), see \secref{sec:RW}. On the other hand, as was shown by
Kaimanovich \cite{Kaimanovich95}, the boundary action of a non-trivial normal
subgroup is always conservative. In particular, if $G=F/H$ is any
non-Liouville (for example, non-amenable) group, then the action of the normal
subgroup $H$ on $(\pt F,\m)$ is conservative without being ergodic. The only
other previously known example of the Hopf decomposition of a boundary action
was the one of completely dissipative $\Z$-actions \cite{Kaimanovich95}.

\medskip

The starting point of our approach is the \emph{Schreier graph} structure on
the quotient homogeneous space $X=H\bs F$. To quote \cite[Section
2.2.6]{Stillwell93}, Schreier's method ``begs to be interpreted in terms of
spanning trees'' in the Schreier graph. Indeed, there is a one-to-one
correspondence between Schreier generating systems for the subgroup $H$ and
spanning trees in $X$ rooted at the origin $o=H$, which we remind in
\secref{sec:span} and \secref{sec:Sch}. This correspondence consists in
assigning the associated cycle in $X$ to any edge removed when passing to the
spanning tree (\thmref{th:span}).

By interpreting points of the boundary $\pt F$ as infinite paths without
backtracking issued from the origin $o=H$ in the Schreier graph $X$, we define
two subsets of $\pt F$: the \emph{Schreier limit set} $\O$ and the
\emph{Schreier fundamental domain} $\De$ (\defref{def:od}). The set $\O$
corresponds to the paths which pass infinitely many times through
$\Edges(X)\setminus\Edges(T)$ and is homeomorphic to the set $\pt H$ of
infinite irreducible words in the alphabet $\bS=S\sqcup S^{-1}$, whereas the
set $\De$ corresponds to the rays issued from the origin in the tree $T$, and
is homeomorphic to the boundary $\pt T$ of $T$. These sets give rise to a
decomposition $\pt F = \left(\bigsqcup_{h\in H}h \De\right) \sqcup \O$
(\thmref{th:1}).

However, in order to study this decomposition further we have to impose an
additional condition on the Schreier generating system by requiring it to be
\emph{minimal}, which means that the corresponding spanning tree $T$ is
geodesic (the class of minimal Schreier systems coincides with the class of
\emph{Nielsen} generating systems, see \secref{sec:minSch} and the references
therein). Under this assumption we prove that the above decomposition, indeed,
coincides (mod 0) with the Hopf decomposition of the boundary action
(\thmref{th:2}).

The topological counterpart of the Hopf decomposition of the boundary action
is the decomposition of the boundary $\pt F$ into a union of the closed
$H$-invariant \emph{limit set} $\La=\La_H$ (the closure of $H$ in the
compactification $\wh F= F \cup \pt F$) and its complement $\La^c$. According
to a general result (valid for all Gromov hyperbolic spaces), the restriction
of the $H$-action to $\La$ is \emph{minimal} (any orbit is dense), whereas its
restriction to $\La^c$ is \emph{properly discontinuous} (no orbit has
accumulation points). The decomposition $\pt F=\La\sqcup\La^c$ corresponds to
the decomposition of the Schreier graph $X$ into a union of its \emph{core}
$X_*$ and the collection of \emph{hanging branches} (\thmref{th:full}; see
\secref{sec:hang} for the definitions). In particular, $\La=\pt F$ if and only
if $X$ has no hanging branches.

The Schreier limit set $\O$ is contained in the full limit set $\La$, which
corresponds to the fact that proper discontinuity of the boundary action on
$\La^c$ implies its complete dissipativity with respect to any quasi-invariant
measure (in particular, the uniform measure~$\m$). Geometrically, any hanging
branch in $X$ gives rise to a non-trivial wandering set in $\La^c$. However,
the action on $\La$ may also have a non-trivial dissipative part, or even be
completely dissipative. For instance, it may so happen that the Schreier graph
$X$ has no hanging branches at all (i.e., $\La=\pt F$), but nonetheless the
boundary action is completely dissipative (\exref{ex:mingrowthdiss}).

We introduce the \emph{small} (resp., \emph{big}) \emph{horospheric limit set}
$\La^{horS}=\La^{horS}_H$ (resp., $\La^{horB}=\La^{horB}_H$) of the subgroup
$H$ as the set of all the points $\o\in\La$ such that any (resp., a certain)
horoball centered at $\o$ contains infinitely many points from $H$, and show
that the Schreier limit set $\O$ is sandwiched between $\La^{horS}$ and
$\La^{horB}$, but coincides with them (mod~0) with respect to the measure $\m$
(\thmref{th:3} and \thmref{th:coinc}). We also establish certain other
inclusions and show by appropriate examples that all of them are strict
(\secref{sec:exincl}).

If the subgroup $H$ is finitely generated (i.e, if the core $X_*$ is finite),
then the \emph{Hopf alternative} between conservativity and complete
dissipativity holds: either the Schreier graph $X$ is finite and the boundary
action of $H$ is ergodic (therefore, conservative), or $X$ is infinite and the
boundary action is completely dissipative (\thmref{th:pr7}). However, for
infinitely generated subgroups the relationship between the ergodic properties
of the boundary action and the geometry of the Schreier graph $X$ is much more
complicated (as illustrated by numerous examples in \secref{sec:exx}).

We prove that if the exponential growth rate of $H$ ($\equiv$ the
\emph{cogrowth} of~$X$) satisfies the inequality $v_H<\sqrt{2m-1}$, where $m$
is the number of generators of $F$ (i.e., if $v_H<\sqrt{v_F}$), then the
boundary action of $H$ is completely dissipative (\thmref{th:4}). On the other
hand, we show (\thmref{th:cor3}) that the boundary action of $H$ is
conservative if and only if $\lim_n |S_X^n|/|S_F^n| = 0$, where $S_X^n$
(resp., $S_F^n$) is the radius $n$ sphere in $X$ (resp., $F$) centered at the
origin $o$ (resp., at the identity $e$). In particular, if the exponential
growth rate $v_X$ of the Schreier graph $X$ satisfies the inequality
$v_X<2m-1$, then the boundary action is conservative (\corref{cor:pr5}).

\medskip

Markov chains (not only the aforementioned simple random walks, but also the
other chains described in \secref{sec:asso-chains}) play an important role in
understanding the ergodic properties of the boundary action. Another
measure-theoretical tool which we use in this paper is the relationship of the
boundary action with two other natural actions of the subgroup~$H$ (see
\secref{sec:ergo} for references and more details).

The first one is the action on the square $\pt^2 F$ of the boundary $\pt F$
endowed with the square of the uniform measure $\m$. The ergodic properties of
this action are the same as for the (discrete) \emph{geodesic flow} on the
Schreier graph $X$ and are described by the \emph{classical Hopf alternative}
(\thmref{th:HTS}): the action of $H$ on $(\pt^2 F,\m^2)$ is either ergodic
(therefore, conservative) or completely dissipative. Moreover, ergodicity of
this action is equivalent to divergence of the \emph{Poincar\'e series}
$\sum_{h\in H} (2m-1)^{-|h|}$. Note that the ergodic behaviour of the action
on $\pt F$ is much more complicated than of that on $\pt^2 F$: for instance,
the Hopf alternative for the former, generally speaking, holds only in the
finitely generated case. It is interesting that one of our descriptions of the
Hopf decomposition of the action on $\pt F$ deals with a series similar to the
Poincar\'e series arising for the action on $\pt^2 F$. However, once again, it
is more complicated as it involves the \emph{Busemann functions} rather than
plain distances (see \thmref{th:2} and \thmref{th:coinc}).

The second auxiliary action is the action of the group $H$ on the space of
horospheres in $F$, i.e., the $\Z$-extension of the action on $\pt F$
determined by the \emph{Busemann cocycle}. Geometrically, this action
corresponds to what could be called (by analogy with Fuchsian groups)
\emph{``horocycle flow''} on the Schreier graph. We use the fact that (unlike
for the action on $\pt^2 F$) the ergodic properties of this action are
precisely the same as for the original action on $\pt F$
(\thmref{th:horocycle}).

\medskip

It is a commonplace that the homogeneous tree is a ``rough sketch'' of the
hyperbolic plane. Both these spaces are Gromov hyperbolic (even CAT($-1$)),
and their isometry groups are ``large enough'' (so that the rotations around
any reference point inside act transitively on the hyperbolic boundary). The
subgroups of the free group $F$, which are the object of our consideration,
are just the torsion free discrete groups of isometries of the Cayley tree of
$F$. Thus, the question about analogous results for discrete isometry groups
in the hyperbolic setup --- be it for the usual hyperbolic plane (Fuchsian
groups), higher dimensional simply connected spaces of constant negative
curvature (Kleinian groups), arbitrary non-compact rank~1 symmetric spaces,
general CAT($-1$) or Gromov hyperbolic spaces, even for spaces which are
hyperbolic in a weaker form --- cannot fail to be asked.

The work on the present article prompted the second author to show that the
identification of the conservative part of the boundary action with the big
horospheric limit set $\La^{horB}$ is actually valid in the full generality of
a discrete group of isometries of an arbitrary Gromov hyperbolic space endowed
with a quasi-conformal boundary measure \cite{Kaimanovich10} (see the
references therein for a list of earlier particular cases of this result). The
proof uses the fact that, by definition, the logarithms of the Radon--Nikodym
derivatives of such a measure are (almost) proportional to the Busemann
cocycle, in combination with the description of the Hopf decomposition of an
arbitrary action in terms of the orbitwise sums of the Radon--Nikodym
derivatives (\thmref{thp:3}). However, this is the only situation in our paper
when the cases of the free group and of the hyperbolic plane are
specializations of a common general result. Even here we obtain, in terms of
Nielsen--Schreier generators, a much more detailed information about the Hopf
decomposition than in the general case (\thmref{th:coinc}).

Two other occasions when our results have analogues for Fuchsian or Kleinian
groups are \thmref{th:4} and \thmref{th:cor3} from
\secref{sec:Sch-graph-geometry} which give qualitative criteria of complete
dissipativity and conservativity of the boundary action, respectively. Here
common general results are unknown, and our methods are completely different
from those used in the hyperbolic situation by Patterson \cite{Patterson77}
and Matsuzaki \cite{Matsuzaki05} in the first case (see \remref{rem:qq}) and
by Sullivan \cite{Sullivan81} in the second case (see \remref{rem:s2}).
Although the ``hyperbolic'' techniques most likely might be carried over to
our situation as well, our approach is much more appropriate in the discrete
case as it uses combinatorial tools not readily available in the continuous
case. For instance, we obtain \thmref{th:cor3} as a corollary of \thmref{th:5}
which gives an explicit formula for the measure of a certain canonical
wandering set; a hyperbolic analogue of \thmref{th:5} is unknown.

\medskip

Let us finally mention some open questions arising in connection with the
present work. The most obvious one is to what extent our results can be
carried over to other boundary measures. The first candidate would be the
conformal (Patterson) measures which are singular with respect to the uniform
measure $\m$ in the case when the growth $v_H$ of $H$ is strictly smaller than
the growth of the ambient group $F$. By a general result from
\cite{Kaimanovich10}, in this case the conservative part can still be
identified with the big horospheric limit set (see above), but we do not know
to what extent the combinatorial machinery developed in the present paper can
be adapted to this situation. Of course, one can also try to generalize our
technique to the nearest relatives of free groups, i.e., to word hyperbolic
groups, or even to general discrete groups of isometries of Gromov hyperbolic
spaces.

In a different direction, it would be interesting to investigate the
properties described in the present paper for \emph{random Schreier graphs}
determined by a probability measure invariant with respect to the ``root
moving'' equivalence relation (in other words, a conjugation invariant
probability measure on the space of subgroups of $F$, see
\cite{Vershik10,Abert-Glasner-Virag11p}).

Finally, a more concrete question concerns existence of conservative boundary
actions (with respect to the uniform measure) with $v_H=\sqrt{2m-1}$, cf.
Theorem 4.2. The analogous question is also open for Fuchsian groups, see
\remref{rem:q} and \remref{rem:qq}. For the free group this situation is
especially intriguing because from the spectral point of view Schreier graphs
with $v_H\le\sqrt{2m-1}$ are precisely the \emph{infinite Ramanujan graphs}
(i.e., have the minimal possible spectral radius, see formula (4.1)). Our
Theorem 4.2 implies, in the case when $v_H<\sqrt{2m-1}$, some properties
conjectured to be true for all infinite Ramanujan graphs: absence of the
Liouville property \cite[Conjecture 1]{Benjamini-Kozma10p} and the fact that
the random walk neighbourhood sampling along the graph converges to the
regular tree \cite[Question 11]{Abert10p}.

\section{Nielsen--Schreier theory and the boundary action}
\label{sec:Nielsen-method}

Let $F$ denote the free group freely generated by a finite set $\A$ with
$|\A|=m\ge 2$, and let $\bA=\A\sqcup\A^{-1}$. The Cayley graph $\G(F,\bA)$ is
a homogeneous tree of degree $|\bA|=2|\A|$.

We shall use the notation $\bA^*$ (resp.,~$\bA^\infty$) for the sets of all
finite (resp., right infinite) words in the alphabet $\bA$. The length of a
word $w$ is denoted by $|w|$. For the subsets of $\bA^*$ and $\bA^\infty$
consisting of freely reduced words we shall add the subscript $r$, so that
there is a canonical map
\begin{equation} \label{eq:sigma}
\s:F\to\bA^*_r
\end{equation}
identifying $F$ and $\bA^*_r$.

Any subgroup $H\le F$ of a free group $F$ is also free. It was proved by
Nielsen \cite{Nielsen21} (for finitely generated subgroup) and Schreier
\cite{Schreier27} by giving two different constructions of free generating
sets in $H$, see \cite{Magnus-Karrass-Solitar76} and the references therein.
As it turned out, Nielsen's generating systems are just a particular case of
Schreier's systems (see \thmref{thm:lem1} below). We shall begin by recasting
the original symbolic construction of Schreier (described in \cite[Section
2.3]{Magnus-Karrass-Solitar76}) in terms of spanning trees in the Schreier
graph $X\cong H\bs F$ (cf. \cite[Section~2.2.6]{Stillwell93} and
\cite[Section~6]{Kapovich-Myasnikov02}). Further we shall construct a
decomposition of the boundary $\pt F$ naturally associated with such a
spanning tree (\thmref{th:1}), which is the main goal of this Section.

\subsection{Spanning trees and Schreier transversals} \label{sec:span}

Given a subgroup $H\le F$, denote by $\G(X,\bA)$ the \emph{Schreier graph} of
the homogeneous space $X=H\bs F$ with respect to $\bA$, i.e., two cosets
$Hg_1,Hg_2\in X$ are connected with an edge if and only if
$g_1^{-1}g_2\in\bA$, in which case the oriented edge $[Hg_1,Hg_2]$ is labelled
with $g_1^{-1}g_2$. Notice that, unlike the Cayley graph, the Schreier graph
may have multiple edges with the same endpoints (but different labels). The
extreme example is $H=F$, when $X$ consists of just one vertex with $|\A|$
attached loops. In the sequel we shall always assume that $X$ is endowed with
the Schreier graph structure.

The Schreier graph $X$ has a distinguished vertex $o=H$, it is connected,
$|\bA|$-regular (loops attached to points $x\in X$ with $xg=g$ for certain
$g\in\bA$ are also counted!), and the set of edge labels around each vertex is
precisely $\bA$. Moreover, the labels assigned to two different orientations
of any edge are mutually inverse. Conversely, any graph with the above
properties is the Schreier graph associated with a subgroup of $F$.

\begin{rem}
By a theorem of Gross any regular graph of even degree can be realized as the
Schreier graph associated to a subgroup of a free group (i.e., its edges can
be labelled in the aforementioned way). It is explained in \cite{Lubotzky95}
for finite graphs; an inductive argument can be used to carry the proof over
to infinite graphs (also see \cite{delaHarpe00}).
\end{rem}

It will be convenient to use the following geometric interpretation of the set
of irreducible words in the alphabet $\bA$:

\begin{prop} \label{pr:1}
The set $\bA^*_r\cong F$ is in one-to-one correspondence
$g\leftrightarrow\pi(g)$ with the set $\Paths_o(X)$ of finite paths without
backtracking in the Schreier graph $X$, starting from the origin $o=H$. This
correspondence amounts to consecutive reading of the edge labels along the
path, starting from the origin.
\end{prop}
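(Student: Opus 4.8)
The plan is to exhibit $\pi$ together with its inverse explicitly, relying on the two structural features of the Schreier graph recorded above: around every vertex the edge labels realise each element of $\bA$ exactly once, and the two orientations of any edge carry mutually inverse labels. The first property makes ``following labels'' deterministic, while the second is what converts backtracking into free cancellation.

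First I would define $\pi$ on a freely reduced word $w=a_1a_2\cdots a_n\in\bA^*_r$. Starting at the origin $o=H$, the label property guarantees a \emph{unique} edge carrying the label $a_1\in\bA$; following it lands at the coset $Ha_1$, since the edge $[Hg_1,Hg_2]$ is labelled $g_1^{-1}g_2$, so reading $a$ at the vertex $Hg$ moves to $Hga$. Iterating, $\pi(w)$ is the path through the cosets $o=H,\,Ha_1,\,Ha_1a_2,\dots,Ha_1a_2\cdots a_n$; it is well defined and uniquely determined by $w$ because each step is forced. The candidate inverse is even simpler: read the labels $a_1,\dots,a_n$ consecutively along a path to produce a word $w\in\bA^*$.

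The key step is the equivalence between the no-backtracking condition on the path and free reducedness of the word. A path backtracks at step $i$ precisely when its $(i+1)$-st edge is the reverse orientation of its $i$-th edge; since reversing an edge inverts its label and distinct edges at a common vertex carry distinct labels, this occurs if and only if $a_{i+1}=a_i^{-1}$. Hence a path lies in $\Paths_o(X)$ exactly when $a_{i+1}\neq a_i^{-1}$ for all $i$, i.e.\ exactly when the associated word is freely reduced and so belongs to $\bA^*_r$. Consequently the label-reading map carries $\Paths_o(X)$ into $\bA^*_r$, the word-to-path map carries $\bA^*_r$ into $\Paths_o(X)$, and the two are mutually inverse by construction, yielding the claimed bijection $g\leftrightarrow\pi(g)$.

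The verification is routine; the only point demanding care is that backtracking is defined in terms of edges rather than vertices (a non-backtracking path may revisit vertices), together with the treatment of loops (edges with $xg=x$). At such a loop a single edge still carries two \emph{distinct} labels $g$ and $g^{-1}$ — distinct because no generator of a free group equals its own inverse — so traversing the loop twice with the same label $g$ is genuinely non-backtracking and matches the reduced word $gg$, whereas the $g$-then-$g^{-1}$ traversal is backtracking and matches the cancellation $gg^{-1}=e$. Once this is recorded, the bijection holds verbatim.
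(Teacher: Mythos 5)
The paper states \propref{pr:1} without proof, treating it as evident from the defining properties of the Schreier graph, and your argument is exactly the verification being left implicit: determinism of label-following from the fact that each label of $\bA$ occurs exactly once at every vertex, the equivalence of backtracking with free cancellation via the inverse-label convention on reversed edges, and the mutual inverseness of the two maps. Your handling of the two genuinely delicate points --- that non-backtracking is an edge condition rather than a vertex condition, and that a loop at a fixed point carries the two distinct labels $a\neq a^{-1}$ --- is correct, so the proposal is complete and consistent with the paper's (omitted) reasoning.
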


Consider a \emph{spanning tree} $T$ in $X$ rooted at the point $o$, so that
the origin $o$ can be connected with any vertex $x\in X$ by a unique path
$[o,x]=[o,x]_T$ which only uses the edges from $T$ (such a tree can easily be
constructed for any connected graph, e.g., see
\cite[Section~2.1.5]{Stillwell93}). Then the set of words associated to these
paths as $x$ runs through the whole set $X$ (see \propref{pr:1}), is a
collection $\T$ of coset representatives (a \emph{transversal}) for the group
$H$. The transversal $\T$ has the property that any initial segment of an
element of $\T$ is itself an element of $\T$. Such transversals are said to
satisfy the \emph{Schreier property}. Conversely, any Schreier transversal
obviously determines a spanning tree in $X$.

\subsection{Schreier generating systems} \label{sec:Sch}

Any Schreier transversal $\T$ (equivalently, the associated spanning tree $T$)
gives rise to a system of free generators for $H$ parameterized by the edges
of $X$ which are not in $T$. Indeed, any such edge
$\E=[x,y]\in\Edges(X)\setminus\Edges(T)$ determines a non-trivial cycle
$\vs_\E=[o,x]_T [x,y] [y,o]_T$ in $X$ obtained by joining the endpoints $x,y$
with $o$ in $T$ by unique paths $[o,x]_T$ and $[y,o]_T$, respectively, see
\figref{fig:cycle}. The corresponding generator $s=s_\E=\pi^{-1}(\vs_\E)$ is
presented by the word $\s(s)$ which consists of the edge labels read along the
path $\vs_\E$ (see \propref{pr:1}). We shall denote by
$\s_-(s),\s_0(s),\s_+(s)\in\bA$ the words which correspond to the parts
$[o,x]_T,[x,y],[y,o]_T$ of the path $\vs_\E$, respectively, so that
\begin{equation} \label{eq:dec}
\s(s) = \s_-(s) \s_0(s) \s_+(s) \;.
\end{equation}
In particular,
\begin{equation} \label{eq:s-+}
|\s_-(s)|=|x|_T\;, \qquad |\s_+(s)|=|y|_T \;,
\end{equation}
where $|\cdot|_T=d_T(o,\cdot)$ is the graph distance from the origin $o$ in
the tree $T$. We denote by $\bS$ the set of all the generators $s=s_\E$ of $H$
obtained in this way, and by $\S\subset\bS$ the set of generators which
correspond to those edges $\E$ which are labelled with elements of $\A$, i.e.,
for which $\s_0(s)\in\A$. Two different orientations of the same edge give a
pair of mutually inverse generators, so that $\bS=\S\sqcup\S^{-1}$.

\begin{figure}[h]
\begin{center}
     \psfrag{o}[r][r]{$o=H$}
     \psfrag{x}[lb][lb]{$x=Hg$}
     \psfrag{y}[lt][lt]{$y=Hg a$}
     \psfrag{a}[l][l]{$a=\s_0(s)$}
     \psfrag{s}[][]{$\vs_\E$}
     \psfrag{e}[l][l]{$\E$}
          \includegraphics[scale=.75]{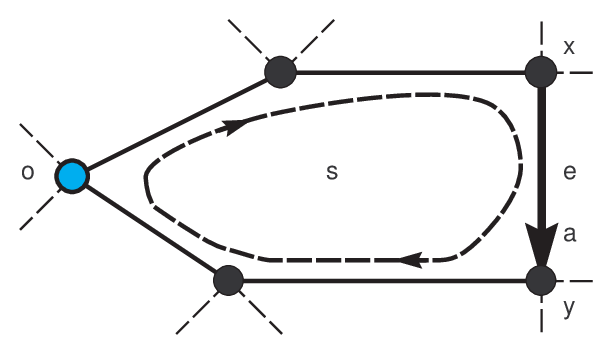}
          \end{center}
          \caption{}
          \label{fig:cycle}
\end{figure}

Obviously, any cycle $\vs$ in $H$ issued from $o$ can be presented as a
composition of the cycles $\vs_\E$ (which correspond to the sequence of edges
from $\Edges(X)\setminus\Edges(T)$ through which $\vs$ passes), so that $\S$
is a generating system for $H$. The fact that it generates $H$ freely follows
from a general theorem of Schreier \cite[Theorem
2.9]{Magnus-Karrass-Solitar76}. In our case, however, there is a more explicit
argument.

\begin{lem} \label{lem:dec}
For any two elements $s,s'\in\bS$ with $s'\neq s^{-1}$ denote by
$$
\a(s,s')=\ov{\s_+(s)\s_-(s')}\in\bA^*_r
$$
the result of the free reduction of the concatenation of the components
$\s_+(s)$ and $\s_-(s')$ from the decomposition \eqref{eq:dec}. Then for any
$h=s_1s_2\dots s_n\in\bS^*_r\cong H$ one has
\begin{equation} \label{eq:decprod}
\s(h) = \s_-(s_1) \s_0(s_1) \a(s_1,s_2) \s_0(s_2) \dots \a(s_{n-1},s_n)
\s_0(s_n) \s_+(s_n) \;.
\end{equation}
\end{lem}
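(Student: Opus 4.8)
The plan is to prove the factorization formula \eqref{eq:decprod} by induction on the length $n$ of the word $h = s_1 s_2 \dots s_n$ written in the free generators $\bS$, with the key geometric input being that concatenating the cycles $\vs_{\E}$ corresponds to concatenating paths in $X$, and that the only cancellation occurs at the junctions between consecutive generators. First I would set up the base case $n=1$: here $h = s_1$ and \eqref{eq:decprod} reduces to the defining decomposition $\s(s_1) = \s_-(s_1)\s_0(s_1)\s_+(s_1)$ from \eqref{eq:dec}, so there is nothing to prove.

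For the inductive step, the essential observation is that the word $h = s_1 \cdots s_n$ corresponds under $\pi$ to the cycle obtained by concatenating $\vs_{s_1}, \vs_{s_2}, \dots, \vs_{s_n}$. Each cycle $\vs_{s_i}$ ends by travelling back to $o$ along $[y_i,o]_T$ (reading $\s_+(s_i)$), and the next cycle $\vs_{s_{i+1}}$ begins by travelling out from $o$ along $[o,x_{i+1}]_T$ (reading $\s_-(s_{i+1})$). Since both of these are geodesic paths in the \emph{tree} $T$ through the root $o$, their concatenation $\s_+(s_i)\s_-(s_{i+1})$ freely reduces, by definition, to $\a(s_i, s_{i+1})$. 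The crucial point I must verify is that \emph{no further cancellation propagates} beyond these junctions: the letters $\s_0(s_i)$ are not absorbed, and the interior of each $\vs_{s_i}$ survives intact in the reduced word $\s(h)$. This is exactly where the condition $s_{i+1}\neq s_i^{-1}$ enters — it guarantees that the edge $\E_{i+1}$ traversed by $\s_0(s_{i+1})$ is genuinely different from (and not the reverse of) the edge $\E_i$, so the reduction at the junction cannot consume the labels $\s_0(s_i)$ or $\s_0(s_{i+1})$ themselves.

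The hard part will be making precise the claim that the cancellation stays local, i.e., that after reducing each junction $\s_+(s_i)\s_-(s_{i+1})$ to $\a(s_i,s_{i+1})$ the resulting long word is \emph{already freely reduced} and hence equals $\s(h)$. The subtlety is that in principle a cancellation at one junction could, after removing a block of letters, expose a new cancellation reaching into an adjacent $\s_0$ or even across an entire short generator. I would control this by appealing to the tree structure: the paths $[o,x_i]_T$ and $[o,x_{i+1}]_T$ both emanate from $o$ in $T$, so they share a common initial segment and then branch; the reduction of $\s_+(s_i)\s_-(s_{i+1})$ retraces the common part and stops precisely at the branch point, which lies strictly inside $T$ and is therefore separated from the non-tree edges $\E_i, \E_{i+1}$ carrying $\s_0(s_i)$ and $\s_0(s_{i+1})$. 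Consequently the letters $\s_0(s_i)$ and $\s_0(s_{i+1})$ flank the reduced junction and are never cancelled, so the reduction is confined to each junction independently.

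Assuming this locality, the induction closes cleanly: writing $h' = s_1 \cdots s_{n-1}$, the inductive hypothesis gives the formula for $\s(h')$, and appending $\s_0(s_n)\s_+(s_n)$ after reducing the single new junction $\s_+(s_{n-1})\s_-(s_n)$ to $\a(s_{n-1},s_n)$ yields \eqref{eq:decprod} for $h$. Since each junction reduction is independent of the others by the locality argument, the reduced word is exactly the displayed concatenation, completing the proof.
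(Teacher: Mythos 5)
Your proof is correct and follows essentially the same route as the paper's: the paper likewise argues that each middle letter $\s_0(s_i)$ labels the non-tree edge $\E_{s_i}$ and therefore survives reduction, while each junction $\s_+(s_i)\s_-(s_{i+1})$ reduces entirely inside the tree $T$ to the label word of the geodesic segment $[y_i,x_{i+1}]_T$, so that the right-hand side of \eqref{eq:decprod} is the label word of the backtracking-free path $[o,x_1],[x_1,y_1],[y_1,x_2],\dots,[x_n,y_n],[y_n,o]$. (The paper states this directly rather than as an induction; the only slips in your write-up are that the two tree paths meeting at the $i$-th junction run from $o$ to $y_i$ and to $x_{i+1}$, not to $x_i$, and that the hypothesis $s_{i+1}\neq s_i^{-1}$ is needed precisely in the degenerate case where the junction cancels completely, i.e.\ $y_i=x_{i+1}$.)
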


\begin{proof}
Look at the decompositions $\s(s_i)=\s_-(s_i)\s_0(s_i)\s_+(s_i)$
\eqref{eq:dec} for all $i=1,2,\dots,n$. The word $\s_-(s_1)\s_0(s_1)$ ends
with the letter $\s_0(s_1)$ which corresponds to passing through the edge
$\E_{s_1}\in\Edges(X)\setminus\Edges(T)$ associated with the generator $s_1$.
Since no other generator passes through this edge, the letter $\s_0(s_1)$ does
not cancel. In the same way the middle letters $\s_0(s_i)$ do not cancel for
all the other $s_i$.

More geometrically, let $[x,y]=\E_s$ and $[x',y']=\E_{s'}$, then $\a(s,s')$ is
the word obtained be reading the labels along the geodesic segment $[y,x']$
joining the points $y$ and $x'$ in the spanning tree $T$. Thus, letting
$[x_i,y_i]=\E_{s_i}$ for $1\le i\le n$, we have that the path
\eqref{eq:decprod} consists of the consecutive segments
$$
[o,x_1], [x_1,y_1], [y_1,x_2], [x_2,y_2], \dots, [y_{n-1},x_n], [x_n,y_n],
[y_n,o] \;,
$$
see \figref{fig:many} where $n=5$.
\end{proof}

\begin{figure}[h]
\begin{center}
     \psfrag{o}[t][tl]{$o$}
     \psfrag{x1}[r][r]{$x_1$}
     \psfrag{xy}[rb][rt]{$y_1=x_2$}
     \psfrag{y2}[r][r]{$y_2$}
     \psfrag{x3}[l][l]{$x_3$}
     \psfrag{y3}[l][l]{$y_3$}
     \psfrag{x4}[l][l]{$x_4$}
     \psfrag{y4}[l][l]{$y_4$}
     \psfrag{x5}[l][l]{$x_5$}
     \psfrag{y5}[l][l]{$y_5$}
          \includegraphics[scale=.75]{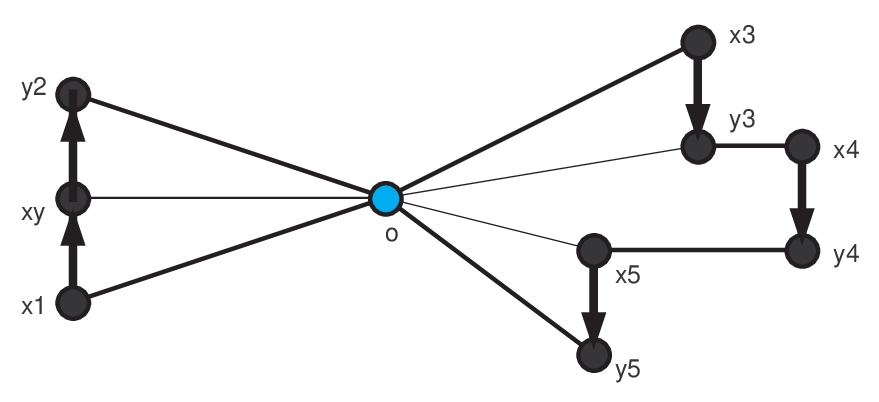}
          \end{center}
          \caption{}
          \label{fig:many}
\end{figure}

We can summarize this discussion in the following way:

\begin{thm} \label{th:span}
Any spanning tree $T$ in the Schreier graph $X$ determines a one-to-one
correspondence $\E\mapsto s_\E,\,s\mapsto\E_s$ between the set of oriented
edges of $X$, which are not in $T$, and the set $\bS=\S\sqcup\S^{-1}$ of the
associated free generators of $H$ and their inverses.
\end{thm}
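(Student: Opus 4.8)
The plan is to deduce the statement directly from the preceding construction together with \lemref{lem:dec}; the only substantive content is bijectivity, and within that, injectivity. First I would confirm that the assignment $\E\mapsto s_\E$ is well defined and lands in $\bS$. Given an oriented edge $\E=[x,y]\in\Edges(X)\setminus\Edges(T)$, the concatenation $\vs_\E=[o,x]_T\,[x,y]\,[y,o]_T$ is a closed path at $o$, and I would check that it has no backtracking by inspecting the junctions: at $x$ the incoming edge lies in $T$ while the outgoing edge $\E$ does not, and symmetrically at $y$, while at the branch vertex $z$ where the tree geodesics $[o,x]_T$ and $[o,y]_T$ separate the path merely traverses consecutive edges of the geodesics $[o,x]_T$ and $[y,o]_T$, which cannot backtrack. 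Hence $\vs_\E\in\Paths_o(X)$ and, by \propref{pr:1}, it determines an element $s_\E=\pi^{-1}(\vs_\E)\in F$; since the path is closed at the origin $o=H$ we get $s_\E\in H$. As $\bS$ was defined to be exactly the set of elements so obtained, surjectivity of $\E\mapsto s_\E$ onto $\bS$ is immediate.

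The core of the argument is injectivity, and here I would invoke the non-cancellation observation already made in the proof of \lemref{lem:dec}. The decomposition $\s(s_\E)=\s_-(s)\,\s_0(s)\,\s_+(s)$ from \eqref{eq:dec} is reduced, since the two outer factors are read along tree geodesics and the middle letter $\s_0(s)$, being the label of the non-tree edge $\E$, survives free reduction; thus the reduced word representing $s_\E$ is precisely the word read along $\vs_\E$. Consequently $\pi(s_\E)=\vs_\E$, and this path traverses exactly one edge of $\Edges(X)\setminus\Edges(T)$, namely $\E$ with its orientation, all its remaining edges lying in $T$. Now if $s_\E=s_{\E'}$ in $H\cong\bS^*_r$, then $\pi(s_\E)=\pi(s_{\E'})$ because $\pi$ is a bijection (\propref{pr:1}), and comparing the unique non-tree edge crossed by each of these equal paths forces $\E=\E'$. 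This simultaneously yields injectivity and exhibits the inverse map $s\mapsto\E_s$, which sends a generator $s\in\bS$ to the unique edge of $\Edges(X)\setminus\Edges(T)$ crossed by the path $\pi(s)$.

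Finally I would record compatibility with orientations and with the splitting $\bS=\S\sqcup\S^{-1}$: reversing $\E=[x,y]$ to $\ov\E=[y,x]$ replaces $\vs_\E$ by the reverse path, so $s_{\ov\E}=s_\E^{-1}$, and the two orientations of one geometric edge correspond to the mutually inverse pair $s,s^{-1}$. That $\S$ in fact generates $H$ \emph{freely} I would quote from Schreier's theorem, or read off from the explicit normal form \eqref{eq:decprod} of \lemref{lem:dec}, so that the $s_\E$ are genuine free generators and not merely a generating set. The one step demanding actual care is the reducedness of $\vs_\E$ together with the non-cancellation of $\s_0(s)$ in \eqref{eq:dec}, i.e. the assertion that each cycle $\vs_\E$ meets $\Edges(X)\setminus\Edges(T)$ in exactly one edge; once that is in hand, everything else is bookkeeping on top of \propref{pr:1} and \lemref{lem:dec}.
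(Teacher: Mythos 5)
Your proposal is correct and follows essentially the same route as the paper, which presents \thmref{th:span} as a summary of the construction in \secref{sec:Sch}: the cycle $\vs_\E$ defines $s_\E$, the non-cancellation of the middle letter $\s_0(s)$ (the key point of \lemref{lem:dec}) makes the word read along $\vs_\E$ reduced so that the unique non-tree edge crossed by $\pi(s)$ recovers $\E_s$, and free generation is read off from \eqref{eq:decprod} or quoted from Schreier. You correctly isolate the one step needing care --- reducedness of $\vs_\E$ at the junctions $x$ and $y$ --- which is exactly where the paper's argument also puts the weight.
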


\begin{defn}
The free generating system $\S$ of the subgroup $H$ is called the
\emph{Schreier system} associated with the spanning tree $T$ (equivalently,
with the corresponding Schreier transversal $\T$).
\end{defn}

\subsection{The boundary map} \label{sec:bdrymap}

There is a natural \emph{compactification} $\wh F= F\cup\pt F$ of the group
$F$. It does not depend on the choice of the generating set $\A$ and admits a
number of interpretations, for instance, as the \emph{end} or as the
\emph{hyperbolic} compactifications of the Cayley graph $\G(F,\bA)$. The
action of the group $F$ on itself extends to a continuous action of $F$ on the
boundary $\pt F$.

In symbolic terms, the map $\s:F\to\bA_r^*$ \eqref{eq:sigma} can be extended
to the \emph{boundary} $\pt F$. This extension (also denoted by $\s$)
$$
\s:\pt F \to \bA^\infty_r
$$
identifies $\pt F$ with the set $\bA^\infty_r$ of infinite freely reduced
words endowed with the product topology of pointwise convergence. A sequence
$g_n\in F$ converges to a boundary point $\o\in\pt F$ if and only if the
finite words $\s(g_n)$ converge to the infinite word $\s(\o)$. The action
$(g,\o)\mapsto g\o$ consists then in concatenation of the associated words
with a subsequent free reduction.

Given a point $\o\in\pt F$ we shall denote by $[\o]_n$ its $n$-th
\emph{truncation}, i.e., the element of $F$ corresponding to the initial
length $n$ segment of the word $\s(\o)$. Geometrically, the sequence
$\left\{[\o]_n\right\}_{n=0}^\infty$ is the \emph{geodesic ray} $[e,\o)$ in
the Cayley graph $\G(F,\bA)$ joining the group identity $e$ with the boundary
point $\o$, see \figref{fig:ray}.

\begin{figure}[h]
\begin{center}
     \psfrag{e}[rb][rb]{$e=[\o]_0$}
     \psfrag{a}[rb][rb]{$[\o]_1$}
     \psfrag{b}[rb][rb]{$[\o]_2$}
     \psfrag{o}[l][l]{$\o$}
          \includegraphics[scale=.75]{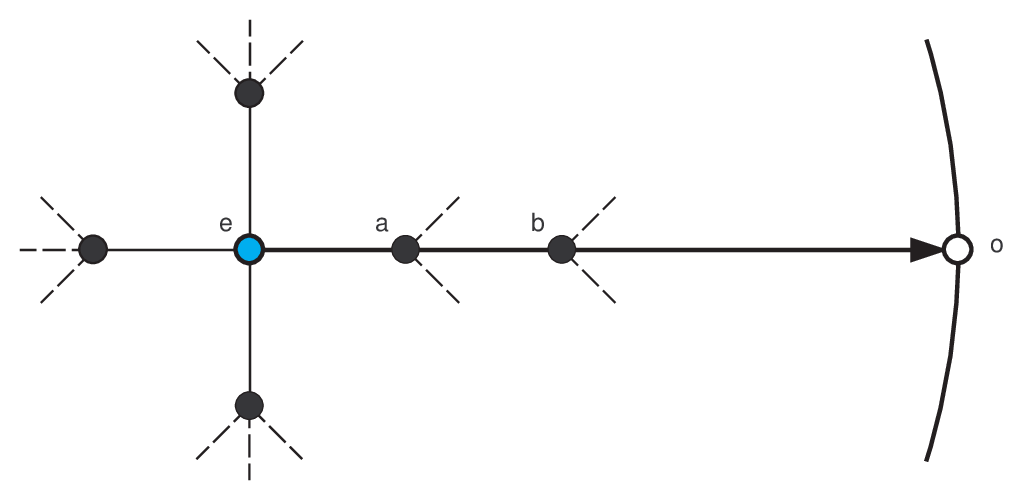}
          \end{center}
          \caption{}
          \label{fig:ray}
\end{figure}

In the same way as for the ambient group $F$, we shall denote by $\pt
H\cong\bS^\infty_r$ the space of infinite freely reduced words in the alphabet
$\bS$ endowed with the product topology of pointwise convergence.

\begin{rem} \label{rem:compct}
The space $\pt H$ is compact if and only if the alphabet $\bS$ is finite,
i.e., the group $H$ is finitely generated.
\end{rem}

\begin{thm} \label{th:bdrymap}
Let $\S$ be the free generating system of a subgroup $H\le F$ determined by a
spanning tree $T$ in the associated Schreier graph $X$ (see \thmref{th:span}).
Then the restriction $\s:H\cong\bS^*_r\to\bA^*_r$ of the map $\s$
\eqref{eq:sigma} extends by continuity to a map $\s^\infty:\pt
H\cong\bS^\infty_r\to\pt F\cong\bA^\infty_r$ as
\begin{equation} \label{eq:lim}
\s^\infty(\xi) = \lim_{n\to\infty} \s([\xi]_n) \;,
\end{equation}
where $\xi=s_1s_2\dots\in\bS^\infty_r\cong\pt H$ is an infinite freely reduced
word in the alphabet $\bS$ and $[\xi]_n=s_1s_2\dots s_n\in\bS^*_r\cong H$ are
its truncations. The extended map $\s^\infty$ is an $H$-equivariant
homeomorphism of $\pt H$ onto its image
\begin{equation} \label{eq:O}
\O=\s^\infty(\pt H)\subset\pt F \;.
\end{equation}
\end{thm}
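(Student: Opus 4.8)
The plan is to establish three things in turn: that the limit in \eqref{eq:lim} exists and defines a well-defined map $\s^\infty:\pt H\to\pt F$, that this map is an $H$-equivariant homeomorphism onto its image, and finally to record that the image is the set $\O$. The key computational input is \lemref{lem:dec}, which gives the explicit factorization \eqref{eq:decprod} of $\s([\xi]_n)$ into the blocks $\s_-(s_1)$, $\s_0(s_1)$, $\a(s_1,s_2)$, $\s_0(s_2),\dots$. First I would observe that the middle letters $\s_0(s_i)$ never cancel (this is exactly the content of the proof of \lemref{lem:dec}), so as $n$ grows the reduced word $\s([\xi]_n)$ can only get longer: each new generator $s_{n+1}$ contributes at least its uncancellable middle letter $\s_0(s_{n+1})$, while the previously committed letters up through $\s_0(s_n)$ are frozen. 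Hence the prefixes $\s([\xi]_n)$ stabilize on longer and longer initial segments, the lengths $|\s([\xi]_n)|$ tend to infinity, and the limit \eqref{eq:lim} exists as a well-defined infinite freely reduced word $\s^\infty(\xi)\in\bA^\infty_r\cong\pt F$.

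Next I would verify continuity and injectivity together by making the stabilization quantitative. Because each generator contributes a non-cancelling letter, the initial segment of $\s^\infty(\xi)$ of any prescribed length is determined by only finitely many of the $s_i$; concretely, the prefix of $\s([\xi]_n)$ up to and including $\s_0(s_n)$ is an initial segment of $\s^\infty(\xi)$ of length at least $n$. This shows $\s^\infty$ is continuous for the product topologies: if two words $\xi,\xi'\in\bS^\infty_r$ agree on a long initial block $s_1\dots s_n$, then $\s^\infty(\xi)$ and $\s^\infty(\xi')$ agree on a correspondingly long initial block in $\bA^\infty_r$. For injectivity I would argue in the reverse direction: from an infinite reduced word $\o=\s^\infty(\xi)$ one can recover the sequence of generators $s_1,s_2,\dots$ by reading off, along the corresponding non-backtracking path in $X$ (using \propref{pr:1}), the successive passages through edges in $\Edges(X)\setminus\Edges(T)$; the middle letters $\s_0(s_i)$ mark exactly these passages and cannot be confused with the tree-edge portions $\a(s_{i-1},s_i)$. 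Thus distinct $\xi$ yield distinct images, and $\s^\infty$ is a continuous bijection onto $\O=\s^\infty(\pt H)$.

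To upgrade this to a homeomorphism I would note that the natural move is to use compactness, but by \remref{rem:compct} the space $\pt H$ is compact only when $H$ is finitely generated, so a continuous bijection from a compact space argument is not available in general. Instead I would exhibit continuity of the inverse directly from the decoding just described: the reconstruction of $s_1\dots s_n$ from $\o$ depends only on a finite initial segment of $\s(\o)$, since the edges of $\Edges(X)\setminus\Edges(T)$ traversed by the first $n$ steps of the associated path are seen within a bounded prefix of the word. This gives continuity of $(\s^\infty)^{-1}$ on $\O$ with respect to the subspace topology, completing the proof that $\s^\infty$ is a homeomorphism onto $\O$. Finally, $H$-equivariance follows from compatibility at the finite level: the restriction $\s:\bS^*_r\to\bA^*_r$ is the map realizing $H\hookrightarrow F$, so it intertwines left multiplication by $h\in H$ on $\bS^*_r$ with the corresponding action on $\bA^*_r$; passing to the limit \eqref{eq:lim} and using that the $F$-action on $\pt F$ is by concatenation-with-reduction (hence continuous) transfers this equivariance to the boundary.

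I expect the main obstacle to be the continuity of the inverse in the infinitely generated case, precisely because the compactness shortcut fails. The crux is to show that bounded prefixes of $\o$ suffice to determine arbitrarily long prefixes $s_1\dots s_n$ of $\xi$; this rests on the geometric fact, implicit in \lemref{lem:dec} and \figref{fig:many}, that consecutive non-tree edges traversed by the path are separated only by tree-geodesic segments $\a(s_i,s_{i+1})$ of finite (though possibly unbounded) length, so that reading far enough into $\s(\o)$ always uncovers one more middle letter $\s_0(s_{n+1})$. Making this uniform-on-compacta rather than merely pointwise is where the care is needed, and it is the step I would write out most carefully.
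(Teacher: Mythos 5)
Your proposal is correct and follows essentially the same route as the paper: existence of the limit and continuity via the non-cancelling middle letters $\s_0(s_i)$ from \lemref{lem:dec}, injectivity and continuity of the inverse by decoding the successive passages through edges of $\Edges(X)\setminus\Edges(T)$ (the paper phrases this as the prefixes $\s_-(s)\s_0(s)$ being isolated, then peels off one letter at a time using equivariance), and equivariance from the finite level. Your closing worry about uniformity is unnecessary — continuity of the inverse is a pointwise statement, and for each $\xi$ and $n$ the finite prefix of $\s^\infty(\xi)$ through $\s_0(s_n)$ already pins down $s_1\dots s_n$ for every point of $\O$ in the corresponding cylinder.
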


\begin{proof}
As follows from \lemref{lem:dec}, the limit \eqref{eq:lim} exists, and
\begin{equation} \label{eq:asymp}
\s^\infty(\xi) = \s_-(s_1)\s_0(s_1)\a(s_1,s_2)\s_0(s_2)\a(s_2,s_3)\s_0(s_3)
\dots \;.
\end{equation}
The $H$-equivariance of the limit map $\s^\infty$ is obvious. It order to
check its invertibility note that the initial segments $\s_-(s)\s_0(s)$ of all
the words $\s(s),\,s\in\bS$ are \emph{isolated} in the sense that they do not
occur as initial segments of any other word $\s(s'),\,s'\in \bS$ (this is one
of the defining properties of a \emph{Nielsen system\/}, see below
\secref{sec:minSch}; in our case it directly follows from the construction of
$\bS$). Therefore, the word $\s^\infty(\xi)$ uniquely determines the letter
$s_1\in\S$ such that $\s^\infty(\xi)$ begins with the segment
$\s_-(s_1)\s_0(s_1)$, i.e., the initial letter of $\xi$. By using the
$H$-equivariance and applying the same consideration to $\xi'=s_1^{-1}\xi$ we
recover then the second letter of $\xi$ and so on. Finally, continuity of
$\s^\infty$ follows from formula \eqref{eq:asymp}, whereas continuity of the
inverse map follows from its description in the previous sentence.
\end{proof}

\subsection{A boundary decomposition} \label{sec:bdrydec}

Along with the set $\O$ \eqref{eq:O} we also define a subset $\De\subset\pt
F\cong\bA^\infty_r$ as the set of all the infinite words which do not begin
with any of the segments $\s_-(s)\s_0(s),\,s\in\bS$.

\begin{defn} \label{def:od}
The sets $\O,\De\subset\pt F$ are called the \emph{Schreier limit set} and the
\emph{Schreier fundamental domain}, respectively. They are determined by the
choice of a spanning tree $T$ in the Schreier graph $X$.
\end{defn}

\begin{prop} \label{pr:meas}
The Schreier limit set $\O$ is $G_\d$ in $\pt F$, and the Schreier fundamental
domain $\De$ is closed in $\pt F$.
\end{prop}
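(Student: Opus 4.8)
The plan is to treat the two assertions separately; the statement about $\De$ is almost immediate, while the one about $\O$ requires first recording the geometric meaning of $\O$.

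For $\De$, I would argue directly from \defref{def:od}. For each $s\in\bS$ let $U_s\subset\pt F$ be the set of infinite reduced words beginning with the finite word $\s_-(s)\s_0(s)$. Under the identification $\pt F\cong\bA^\infty_r$ with the product topology, $U_s$ is a cylinder set and is therefore clopen. By definition $\De=\pt F\setminus\bigcup_{s\in\bS}U_s$ is the complement of a union of open sets, hence closed. (Note that $\bS$ may be infinite, but an arbitrary union of open sets is open, so no finiteness hypothesis is needed.)

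For $\O$ I would first establish the geometric characterization that, interpreting $\o\in\pt F$ via \propref{pr:1} (extended to infinite paths) as the infinite non-backtracking path $\pi(\o)$ issued from $o$ in $X$, one has $\o\in\O$ if and only if $\pi(\o)$ crosses infinitely many edges of $\Edges(X)\setminus\Edges(T)$. In the forward direction this follows from \eqref{eq:asymp} together with \lemref{lem:dec}: each middle letter $\s_0(s_i)$ records a crossing of the non-tree edge $\E_{s_i}$ and survives the free reduction, and there are infinitely many of them since $\xi=s_1s_2\dots$ is infinite. In the backward direction, given a path with non-tree crossings $\E_{s_1},\E_{s_2},\dots$ occurring at times tending to infinity, the maximal tree-segments between consecutive crossings are geodesics of $T$ (unique reduced paths in a tree), so reading off the generators produces a word $\xi=s_1s_2\cdots\in\bS^\infty_r$ with $\s^\infty(\xi)=\o$ by the geometric form of \lemref{lem:dec}; the reducedness $s_{i+1}\neq s_i^{-1}$ is forced by the absence of backtracking in $\pi(\o)$.

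Granting this, the $G_\delta$ property is formal. For each $k$ let $C_k\subset\pt F$ be the set of those $\o$ whose $k$-th edge does not belong to $T$. This condition depends only on the length-$k$ prefix of $\s(\o)$, so $C_k$ is a union of length-$k$ cylinders and is thus clopen. The characterization above rewrites the condition ``$\pi(\o)$ crosses infinitely many non-tree edges'' as the standard infinitely-often set $\O=\bigcap_{N\ge 1}\bigcup_{k>N}C_k$, a countable intersection of open sets, hence $G_\delta$. The main obstacle is the geometric characterization of $\O$, specifically its backward direction: reconstructing the reduced word $\xi$ from an arbitrary path with infinitely many non-tree crossings and checking $\s^\infty(\xi)=\o$. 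This, however, is essentially the invertibility already proved in \thmref{th:bdrymap}, combined with the non-cancellation of the letters $\s_0(s_i)$ supplied by \lemref{lem:dec}, so it presents no genuine new difficulty.
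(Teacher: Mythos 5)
Your proof is correct and follows essentially the same route as the paper: $\De$ is the complement of a union of open cylinders, and $\O$ is exhibited as a countable intersection of open sets built from the expansion \eqref{eq:asymp}. The only cosmetic difference is that you detour through the path characterization of $\O$ (which is the paper's \propref{pr:2}, proved immediately afterwards) to write $\O$ as the infinitely-often set $\bigcap_N\bigcup_{k>N}C_k$, whereas the paper indexes its open sets directly by the cylinders $\C_n(\xi)$, $\xi\in\pt H$; both amount to the same $G_\delta$ presentation.
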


\begin{proof}
Given a point $\xi\in\pt H$, denote by $\C_n(\xi)\subset\pt F$ the cylinder
set consisting of all the infinite words beginning with the initial segment
$\s_-(s_1)\s_0(s_1)\a(s_1,s_2)\dots\a(s_{n-1},s_n)\s_0(s_n)$ of
$\s^\infty(\xi)$ in the expansion \eqref{eq:asymp}. Then
$$
\O = \bigcap_n \bigcup_{\xi\in\pt H} \C_n(\xi) \;, \qquad  \De =
\ov{\bigcup_{\xi\in\pt H} \C_1(\xi) } \;.
$$
The cylinders $\C_n(\xi)$ are all open in $\pt F$, whence the claim.
\end{proof}

\begin{rem}
The Schreier limit set $\O$ is closed in $\pt F$ ($\equiv$ compact in the
relative topology) if and only if $\pt H$ is compact, i.e., if and only if $H$
is finitely generated (cf. \remref{rem:compct}).
\end{rem}

The identification $\pi$ of the group $F\cong\bA^*_r$ with the set
$\Paths_o(X)$ (\propref{pr:1}) obviously extends to an identification (also
denoted by $\pi$) of the boundary $\pt F\cong\bA^\infty_r$ with the set
$\Paths^\infty_o(X)$ of infinite paths without backtracking in $X$ issued from
$o$. In terms of this identification the sets $\O$ and $\De$ admit the
following descriptions:

\begin{prop} \label{pr:2}
The Schreier limit set $\O$ corresponds to the set of infinite paths without
backtracking in $X$ which pass infinitely often through the edges not in the
spanning tree $T$. The Schreier fundamental domain $\De$ corresponds to the
set of paths which always stay in $T$, i.e., which never pass through any of
the edges from $\Edges(X)\setminus \Edges(T)$.
\end{prop}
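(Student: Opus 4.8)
The plan is to prove Proposition~\ref{pr:2} by translating the word-level descriptions of $\O$ and $\De$ (from Definition~\ref{def:od}) into the path language provided by the identification $\pi:\pt F\cong\bA^\infty_r\to\Paths^\infty_o(X)$. The key fact I would invoke is \propref{pr:1} together with its boundary extension: reading the labels of an infinite path without backtracking from $o$ is exactly the same operation as spelling out the infinite reduced word $\s(\o)$. Thus passing through an edge of $X$ corresponds to reading the label of that edge, and the whole statement becomes a matter of recognizing when a path uses edges outside $T$ in terms of the initial segments $\s_-(s)\s_0(s)$.

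\smallskip

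\emph{The claim about $\De$.} First I would handle the fundamental domain, since it is the cleaner half. A path in $\Paths^\infty_o(X)$ stays forever inside $T$ precisely when it never traverses an edge $\E\in\Edges(X)\setminus\Edges(T)$. I want to match this against the defining condition of $\De$: that $\s(\o)$ begins with none of the segments $\s_-(s)\s_0(s)$, $s\in\bS$. The point is that, by the construction in \secref{sec:Sch} and formula \eqref{eq:s-+}, the word $\s_-(s)$ spells the geodesic $[o,x]_T$ inside $T$ and $\s_0(s)$ is the label of the first non-tree edge $\E_s=[x,y]$ met. Hence a path first leaves $T$ at some step exactly when its label sequence first completes a segment of the form $\s_-(s)\s_0(s)$; a path that \emph{never} leaves $T$ is therefore exactly one whose word avoids all these segments as initial segments after any tree-prefix. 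I would make this precise by induction on the path, using the fact that within $T$ there is no backtracking ambiguity, so the path is determined by the vertices it visits. The closure operation in the formula $\De=\ov{\bigcup_\xi\C_1(\xi)}$ from \propref{pr:meas} is automatically consistent, since the set of paths staying in $T$ is closed (it is an intersection of the clopen conditions ``the $n$-th edge lies in $T$'').

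\smallskip

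\emph{The claim about $\O$.} For the limit set I would use the explicit expansion \eqref{eq:asymp}: for $\xi=s_1s_2\dots\in\pt H$ the image $\s^\infty(\xi)$ reads the concatenated tree-segments and non-tree labels $\s_-(s_1)\s_0(s_1)\a(s_1,s_2)\s_0(s_2)\dots$, and by \lemref{lem:dec} the letters $\s_0(s_i)$ never cancel. Geometrically, as recorded in the proof of \lemref{lem:dec}, the corresponding path in $X$ consists of the consecutive segments $[o,x_1],[x_1,y_1],[y_1,x_2],\dots$, where each $[x_i,y_i]=\E_{s_i}$ is a non-tree edge and the connecting segments $\a(s_i,s_{i+1})$ run inside $T$. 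Thus $\s^\infty(\xi)$ passes through infinitely many non-tree edges, namely the $\E_{s_i}$. Conversely, given any infinite non-backtracking path through infinitely many non-tree edges, I would read off its successive non-tree crossings $\E_{s_1},\E_{s_2},\dots$, set $s_i=s_{\E_{s_i}}$ via \thmref{th:span}, and argue that the reduced word $s_1s_2\dots$ lies in $\bS^\infty_r$ and maps to the given path under $\s^\infty$ — essentially the invertibility argument already carried out in the proof of \thmref{th:bdrymap}, extended from finite to infinite words.

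\smallskip

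\emph{Main obstacle.} The delicate point, and the one I would spend most care on, is the ``infinitely often'' condition in the inverse direction for $\O$: I must show that reducing $s_1s_2\dots$ in $\bS$ does not collapse infinitely many non-tree crossings down to finitely many, i.e. that the non-cancellation of the $\s_0(s_i)$ from \lemref{lem:dec} genuinely persists in the infinite limit and that no subtle cancellation at infinity can occur. The reduced-word structure of $\pt H\cong\bS^\infty_r$ handles the reduction of $\xi$ itself, but I must confirm that a path oscillating between $T$ and non-tree edges infinitely often cannot be produced by a word in $\bS^*_r$ that is eventually trivial or by a ray of $\De$ with only finitely many excursions. This is where I would lean on the fact, established in \thmref{th:bdrymap}, that the segments $\s_-(s)\s_0(s)$ are \emph{isolated}, guaranteeing a one-to-one recovery of each $s_i$ from the infinite word and hence a clean correspondence between infinite non-tree crossings and infinite letters of $\xi$.
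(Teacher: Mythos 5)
Your proposal is correct and follows essentially the same route as the paper: both directions for $\O$ are read off from the expansion \eqref{eq:asymp} and the geometric description of the cycles in the proof of \lemref{lem:dec} (crossings of non-tree edges occur exactly at the letters $\s_0(s_i)$, and conversely the recorded crossings reconstruct $\xi$ via \thmref{th:span}), while the claim for $\De$ reduces to the observation that $\s(\o)$ begins with $\s_-(s)\s_0(s)$ if and only if $\E_s$ is the first non-tree edge traversed. Your extra care about non-cancellation in the converse direction (via the isolated initial segments from \thmref{th:bdrymap}) is a legitimate filling-in of a step the paper leaves implicit, not a different method.
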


\begin{proof}
In view of the correspondence from \thmref{th:span}, formula \eqref{eq:asymp}
shows that if $\o=\s^\infty(\xi)\in\O$, then the associated path $\pi(\o)$
passes through the edges $\E_{s_1},\E_{s_2},\dots$ at the moments which
correspond to the letters $\s_0(s_1),\s_0(s_2),\dots$. Conversely, let us
record consecutively the edges $\E_{s_1},\E_{s_2},\dots$ through which the
path corresponding to $\o\in\pt F$ passes. Then $\o=\s^\infty(\xi)$ for
$\xi=s_1s_2\dots$.

In the same way one verifies the description of the set $\De$. A word
$\o\in\pt F$ begins with the segment $\s_-(s)\s_0(s)$ for a certain $s\in\bS$
if and only if the edge $\E_s$ is the first edge not in $T$ through which the
associated path passes.
\end{proof}

Following the above argument one also obtains a description of the translates
$h\De$ of the Schreier fundamental domain (cf. \lemref{lem:dec} and
\figref{fig:many}):

\begin{prop} \label{pr:trans}
For any $h=s_1s_2\dots s_n\in\bS^*_r\cong H$ the set $h\De$ corresponds to the
set of paths in $X$ which, starting from $o$, pass through the edges
$\E_{s_1},\E_{s_2},\dots,\E_{s_n}$ and follow edges in $T$ at all the other
times.
\end{prop}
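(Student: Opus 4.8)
The plan is to read off the translate $h\De$ directly on the level of paths in $X$, combining the geometric form of the cycle $\pi(h)$ from \lemref{lem:dec} (and \figref{fig:many}) with the description of $\De$ in \propref{pr:2}, and using that the boundary action is concatenation followed by free reduction. Thus I would start from the fact that for $\o\in\De$ the path $\pi(\o)$ stays entirely inside $T$, and that $h\o$ is represented by the freely reduced word $\ov{\s(h)\s(\o)}$. By \lemref{lem:dec} the word $\s(h)$ reads the cycle
\[
[o,x_1]\,\E_{s_1}\,[y_1,x_2]\,\E_{s_2}\dots[y_{n-1},x_n]\,\E_{s_n}\,[y_n,o]_T \;,
\]
whose only non-tree edges are $\E_{s_1},\dots,\E_{s_n}$ and whose terminal segment $[y_n,o]_T$ lies in $T$.

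For the forward inclusion I would observe that, upon concatenating $\s(h)$ with $\s(\o)$, the only cancellation can occur at the junction, between the tree segment $\s_+(s_n)$ (which reads $[y_n,o]_T$) and the initial tree segment of $\s(\o)$. The decisive point, already isolated in the proof of \lemref{lem:dec}, is that each middle letter $\s_0(s_i)$ is the label of the non-tree edge $\E_{s_i}$, and at its vertex no $T$-edge carries this label; since $\pi(\o)$ never leaves $T$, the reduction cannot reach $\s_0(s_n)$, let alone the earlier letters. Hence $\ov{\s(h)\s(\o)}$ leaves all of $\E_{s_1},\dots,\E_{s_n}$ intact, and only the tree walk $[y_n,o]_T$ followed by $\pi(\o)$ gets shortened; as $T$ is a tree, this part reduces to a geodesic ray in $T$ issued from $y_n$. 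Therefore $\pi(h\o)$ passes through $\E_{s_1},\dots,\E_{s_n}$ in order and follows edges of $T$ at all other moments.

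For the converse, suppose $\gamma\in\Paths^\infty_o(X)$ passes through $\E_{s_1},\dots,\E_{s_n}$ and uses only tree edges otherwise. Before the first non-tree edge and between consecutive ones, $\gamma$ is a backtracking-free walk in $T$ between fixed endpoints, which—$T$ being a tree—is forced to be the unique tree geodesic joining them; hence $\gamma$ coincides with $\pi(h)$ up to the vertex $y_n$, and thereafter continues as a geodesic ray in $T$. Retracting this terminal ray through $o$ (prepending $[o,y_n]_T$ and reducing inside $T$) produces a ray in $T$ issued from $o$, i.e.\ a point $\o\in\De$ by \propref{pr:2}; unwinding the identification $\pi$ together with the cancellation pattern above shows $\pi(h\o)=\gamma$. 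Since $h$ acts on $\pt F$ as a homeomorphism and $\pi$ is a bijection, the two inclusions give the asserted correspondence for $h\De$.

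The only genuinely delicate step is the bookkeeping of the free reduction at the junction of $\s(h)$ and $\s(\o)$—namely, confirming that the reduction is confined to the tree part and cannot consume the protected letters $\s_0(s_i)$. This, however, is precisely the non-cancellation mechanism established in \lemref{lem:dec}, so I expect no new obstacle beyond invoking that lemma carefully.
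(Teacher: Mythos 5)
Your proof is correct and follows essentially the same route the paper intends: the paper gives no separate argument for this proposition, merely citing \lemref{lem:dec}, \figref{fig:many} and the argument of \propref{pr:2}, and your writeup is a careful elaboration of exactly that — using the non-cancellation of the protected letters $\s_0(s_i)$ to control the free reduction at the junction of $\s(h)$ and $\s(\o)$, and the uniqueness of non-backtracking walks in the tree $T$ for the converse.
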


Since the origin $o$ can be joined with any point $x\in X$ by a unique path in
the spanning tree $T$, the correspondence described in \propref{pr:1}
determines a natural embedding of $T$ into the Cayley graph $\G(F,\bA)$ such
that $o$ is mapped to the identity $e\in F$. Then the boundary ($\equiv$ the
space of ends) $\pt T$ becomes a subset of $\pt F$, and \propref{pr:2} implies

\begin{prop}
Under the above identification the Schreier fundamental domain $\De\subset\pt
F$ is homeomorphic to the boundary $\pt T$ of the spanning tree $T$.
\end{prop}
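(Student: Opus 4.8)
The plan is to realise $\pt T$ as the space of infinite geodesic rays in $T$ issued from the root $o$, and to match such rays with the points of $\De$ through the embedding $T\hookrightarrow\G(F,\bA)$ described just before the statement.

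First I would make this embedding precise and observe that it is isometric. As $T$ is a tree rooted at $o$, each vertex $x$ is joined to $o$ by the unique reduced path $[o,x]_T$; by \propref{pr:1} reading its labels yields a freely reduced word $\s(x)\in\bA^*_r$, and since a geodesic in a tree backtracks nowhere, $\s(x)$ has length exactly $|x|_T=d_T(o,x)$. Thus $x\mapsto\s(x)$ carries $\Vert(T)$, equipped with $d_T$, isometrically onto a subtree of the Cayley tree, and this is precisely the embedding under which $\pt T$ is regarded as a subset of $\pt F$.

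Next I would pass to the boundaries. An end of $T$ is represented by a unique geodesic ray $o=x_0,x_1,x_2,\dots$; reading its labels produces the infinite reduced word $\lim_n\s(x_n)\in\bA^\infty_r$, whose associated infinite path stays inside $T$ at every step. By \propref{pr:2} these are exactly the elements of $\De$; conversely every $\o\in\De$ corresponds to an infinite backtracking-free path in $X$ that never leaves $T$, and since $T$ is a tree this path is a geodesic ray and hence determines a unique end. This gives a bijection $\pt T\to\De$.

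Finally I would check that this bijection is a homeomorphism, which is the only step requiring care. Both spaces carry the topology of agreement on long initial segments: a basic neighbourhood of an end of $T$ consists of the rays sharing its initial segment up to some vertex $x_n$, while the relative topology of $\De\subset\pt F\cong\bA^\infty_r$ is generated by the cylinders of words with a prescribed initial segment. Because the embedding preserves length ($|\s(x_n)|=|x_n|_T$), sharing the first $n$ vertices of a ray corresponds precisely to the words agreeing on their initial segment of length $|x_n|_T$, so the two families of basic neighbourhoods match under the bijection and continuity holds in both directions. In fact, since $X$ is $|\bA|$-regular the tree $T$ is locally finite, so $\pt T$ is compact, while $\De$ is closed in the compact space $\pt F$ (\propref{pr:meas}) and hence also compact Hausdorff; it would therefore even suffice to verify continuity in one direction and invoke compactness. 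The main obstacle is thus nothing more than this length-preservation, which makes the two end-topologies coincide, the identification of the underlying sets being immediate from \propref{pr:1} and \propref{pr:2}.
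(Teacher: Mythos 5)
Your argument is correct and is essentially the paper's own: the paper derives this proposition immediately from the embedding of $T$ into the Cayley graph given by \propref{pr:1} together with the description of $\De$ in \propref{pr:2}, exactly the two ingredients you use. You merely spell out the details the paper leaves implicit (length preservation $|\s(x_n)|=|x_n|_T$ and the matching of cylinder neighbourhoods), which is a sound elaboration rather than a different route.
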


\propref{pr:2} and \propref{pr:trans} yield

\begin{thm} \label{th:1}
Given a spanning tree in the Schreier graph $X\cong H\bs F$, the associated
Schreier limit set $\O$ and the translates of the Schreier fundamental domain
$\De$ provide a disjoint decomposition of the boundary
\begin{equation} \label{eq:*}
\pt F = \left(\bigsqcup_{h\in H}h \De\right) \sqcup \O \;.
\end{equation}
\end{thm}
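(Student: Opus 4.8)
The plan is to argue entirely on the combinatorial side, transporting everything through the identification $\pi$ of $\pt F\cong\bA^\infty_r$ with the set $\Paths^\infty_o(X)$ of infinite non-backtracking paths in $X$ issued from $o$, and to use the path descriptions of $\O$ and of the translates $h\De$ already supplied by \propref{pr:2} and \propref{pr:trans}. The organizing principle is a dichotomy: for each $\o\in\pt F$ I look at how many times the path $\pi(\o)$ crosses edges from $\Edges(X)\setminus\Edges(T)$. Recording these non-tree edges in order gives a (finite or infinite) list $\E_{s_1},\E_{s_2},\dots$, and the whole theorem amounts to matching this list with either membership in $\O$ or with a unique translate $h\De$.

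To prove that the right-hand side of \eqref{eq:*} covers $\pt F$, take $\o\in\pt F$ and consider its list of non-tree crossings. If the list is infinite, then $\o\in\O$ directly by \propref{pr:2}. If the list is finite, say $\E_{s_1},\dots,\E_{s_n}$ (the empty list $n=0$ being allowed and corresponding to a path that stays in $T$, i.e. to $\o\in\De$), I claim $\o\in h\De$ with $h=s_1s_2\cdots s_n$. Before invoking \propref{pr:trans} I must check that this word lies in $\bS^*_r$, i.e. is freely reduced: $s_{i+1}\neq s_i^{-1}$. This is precisely where the no-backtracking hypothesis is used. If $s_{i+1}=s_i^{-1}$, then by \thmref{th:span} the edge $\E_{s_{i+1}}$ is $\E_{s_i}$ with reversed orientation, so after crossing $\E_{s_i}$ into its terminal endpoint the path would arrive at the initial vertex of $\E_{s_{i+1}}$ with a trivial intervening tree segment (a tree has no nontrivial closed segment), and would then have to traverse the \emph{same} edge back immediately, contradicting the absence of backtracking. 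Hence $h\in\bS^*_r\cong H$ is well defined, and \propref{pr:trans} yields $\o\in h\De$.

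For disjointness I use that the non-tree crossing count is an invariant of the path. Every point of $\O$ crosses non-tree edges infinitely often (\propref{pr:2}), while every $h\De$ consists of paths crossing them only finitely often (\propref{pr:trans}), so $\O$ is disjoint from $\bigsqcup_{h}h\De$. If $\o\in h\De\cap h'\De$ with $h=s_1\cdots s_n$ and $h'=s'_1\cdots s'_m$ reduced, then \propref{pr:trans} says $\pi(\o)$ crosses exactly $\E_{s_1},\dots,\E_{s_n}$ and exactly $\E_{s'_1},\dots,\E_{s'_m}$; since this ordered list of non-tree edges is determined by the path alone, we get $n=m$ and $\E_{s_i}=\E_{s'_i}$ for each $i$. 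The bijection $\E\mapsto s_\E$ of \thmref{th:span} then forces $s_i=s'_i$, so $h=h'$. Thus the sets $h\De$ are pairwise disjoint, and \eqref{eq:*} follows.

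The only genuinely non-formal step is the reducedness verification in the second paragraph; everything else is the bookkeeping of matching non-backtracking paths with reduced words via Propositions~\ref{pr:2} and~\ref{pr:trans}. I expect this to be the main (though modest) obstacle, as it is the single point where the \emph{geometry} of the spanning tree---that a generator and its inverse share one underlying edge---must be reconciled with the \emph{dynamical} no-backtracking constraint.
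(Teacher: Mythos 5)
Your proof is correct and follows essentially the same route as the paper, which simply derives \eqref{eq:*} from \propref{pr:2} and \propref{pr:trans}; you have merely made explicit the bookkeeping (the dichotomy on the number of non-tree crossings and the uniqueness of the ordered crossing list). Your verification that the recorded word $s_1\cdots s_n$ is freely reduced is a worthwhile detail that the paper leaves implicit in the proofs of \propref{pr:2} and \propref{pr:trans}.
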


\subsection{Geodesic spanning trees and minimal Schreier systems}
\label{sec:minSch}

A spanning tree $T$ in a graph $X$ is called \emph{geodesic} (with respect to
a root vertex $o$), if $d_T(o,x)=d_X(o,x)$ for every vertex $x$ of $X$. A
geodesic spanning tree exists in any connected graph. For a Schreier graph
$X$, one possible way to construct a geodesic spanning tree is to use the fact
that its edges are labelled with letters from $\bA$. Then, taking for any
vertex $x\in X$ the lexicographically minimal among all the geodesic segments
joining the origin $o$ with $x$, the union of all such minimal segments is a
geodesic spanning tree in $X$.

In terms of the discussion from \secref{sec:span}, a spanning tree in the
Schreier graph $X$ is geodesic if and only if the corresponding Schreier
transversal is \emph{minimal}, i.e., the length of each representative is
minimal in its coset. The Schreier system of free generators $\S$ associated
with a minimal Schreier transversal (equivalently, with a geodesic spanning
tree in $X$) is called a \emph{minimal Schreier system}.

An important consequence of minimality is the inequality
\begin{equation} \label{eq:triangle}
\bigl| |\s_-(s)| - |\s_+(s)| \bigr| \le 1 \qquad \forall\,s\in\bS \;,
\end{equation}
which follows at once from the geometric interpretation given in
\secref{sec:Sch} (more precisely, from formula \eqref{eq:s-+}).

\medskip

We refer the reader to \cite[Section 3.2]{Magnus-Karrass-Solitar76} for a
definition and construction of \emph{Nielsen systems} of free generators in a
subgroup of a free group.

\begin{thm}[{\cite[Theorem 3.4]{Magnus-Karrass-Solitar76}}]\label{thm:lem1}
Any minimal Schreier system of generators in a subgroup $H$ of a free group
$F$ is a Nielsen system. Conversely, any Nielsen system of generators is (up
to a possible inversion of some elements) a minimal Schreier system.
\end{thm}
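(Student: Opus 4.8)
The plan is to prove the two implications separately. The forward direction (minimal Schreier $\Rightarrow$ Nielsen) I would handle by a direct, bounded verification of Nielsen's defining conditions, while the converse I would obtain by reconstructing a geodesic spanning tree from the combinatorial structure of a Nielsen system. Recall that, writing all lengths in $F$, a free generating set $\bS=\S\sqcup\S^{-1}$ is a \emph{Nielsen system} (\cite[Section 3.2]{Magnus-Karrass-Solitar76}) when: (N0) every generator is nontrivial; (N1) $|ss'|\ge\max\{|s|,|s'|\}$ for all $s,s'\in\bS$ with $s'\ne s^{-1}$; and (N2) $|ss's''|>|s|-|s'|+|s''|$ whenever $s'\ne s^{-1}$ and $s''\ne s'^{-1}$. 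The key observation is that all three can be read off from the explicit reduced form supplied by \lemref{lem:dec}: writing $\E_s=[x,y]$, $\E_{s'}=[x',y']$, $\E_{s''}=[x'',y'']$, one has $|\s_-(s)|=|x|_T$ and $|\s_+(s)|=|y|_T$ by \eqref{eq:s-+}, the geometric reading in the proof of \lemref{lem:dec} gives $|\a(s,s')|=d_T(y,x')$ and $|\a(s',s'')|=d_T(y',x'')$, and the words produced by \lemref{lem:dec} are already freely reduced since the middle letters $\s_0$ never cancel. Thus each side of (N1) and (N2) becomes an explicit sum of tree distances.

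Condition (N0) is immediate, as $\s_0(s)\in\bA$ forces $|\s(s)|\ge1$. For (N1) one computes $|ss'|-|s|=|\a(s,s')|+1+|\s_+(s')|-|\s_+(s)|$ and bounds $|\a(s,s')|=d_T(y,x')\ge|y|_T-|x'|_T$, while the balance inequality \eqref{eq:triangle} applied to $\E_{s'}$ gives $|y'|_T\ge|x'|_T-1$; adding these yields $|ss'|\ge|s|$, and the symmetric estimate gives $|ss'|\ge|s'|$. For (N2) the same bookkeeping reduces the claim to $d_T(y,x')+d_T(y',x'')+2>\bigl(|y|_T-|x'|_T\bigr)+\bigl(|x''|_T-|y'|_T\bigr)$, which follows from the two triangle inequalities $d_T(y,x')\ge|y|_T-|x'|_T$ and $d_T(y',x'')\ge|x''|_T-|y'|_T$, the strict gap being furnished by the additive $2$. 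I would stress that minimality of the Schreier transversal enters precisely through \eqref{eq:triangle}: for a non-geodesic tree the halves $\s_-,\s_+$ can be badly unbalanced and (N1) may fail, even though the isolated-middle-letter property of \thmref{th:bdrymap} still holds. This is exactly why minimality, and not merely the Schreier property, is needed.

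For the converse I would run the construction of \secref{sec:Sch} backwards. Given a Nielsen system $N$ generating $H$, the structure theory of Nielsen-reduced sets attaches to each $u\in N^{\pm}$ (after possibly inverting some elements) a reduced factorization into a left half, a single distinguished middle letter $t_u$ that survives in every reduced product, and a right half, the two halves being balanced in the sense of \eqref{eq:triangle}. The plan is then to show that the collection of all prefixes of the left halves is prefix-closed and selects exactly one minimal-length representative in each coset it meets, hence defines a geodesic spanning tree $T$ in $X=H\bs F$; the non-tree edge crossing from the coset of the left half of $u$ to the coset obtained after appending $t_u$ reads out as $\s_-(s)\s_0(s)\s_+(s)=u$, so by \thmref{th:span} the associated Schreier system is $N$ up to inversion.

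I expect this converse to be the main obstacle. The forward direction is a bounded-length computation, whereas here the real content is \emph{consistency}: one must verify that the Nielsen conditions force the left halves to assemble into a single prefix-closed transversal, that this transversal is minimal (not merely Schreier), and that distinct non-inverse generators determine distinct non-tree edges. This is exactly where (N1)--(N2) and the balance of the halves do the work, matching in reverse the role played by \eqref{eq:triangle} in the forward direction.
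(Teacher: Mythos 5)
The paper does not prove this statement at all: it is imported verbatim as \cite[Theorem 3.4]{Magnus-Karrass-Solitar76}, and the only original content is the remark immediately following it, namely that the geodesic-spanning-tree interpretation makes the ``minimal Schreier $\implies$ Nielsen'' half transparent. Your forward direction is a correct and complete execution of exactly that remark: the bookkeeping $|ss'|-|s|=d_T(y,x')+1+|y'|_T-|y|_T\ge 0$, obtained from the tree triangle inequality together with \eqref{eq:triangle}, and the reduction of (N2) to $d_T(y,x')+d_T(y',x'')+2>(|y|_T-|x'|_T)+(|x''|_T-|y'|_T)$, both check out (note that your (N2) argument in fact uses only triangle inequalities in $T$ and not minimality, which is harmless). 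Your diagnosis of where minimality enters --- through the balance \eqref{eq:triangle} in (N1) --- is also the right one.

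The converse, however, is only a plan, and you say so yourself. The step asserting that the prefixes of the left halves form a prefix-closed family selecting exactly one minimal-length representative per coset is precisely the content of the Karrass--Solitar and Hall--Rad\'o arguments and is not a formality: one must in addition extend this partial transversal to a full geodesic spanning tree of $X$ (the prefixes of the left halves do not reach every coset), and then argue that the resulting Schreier basis coincides with $N$ up to inversion rather than merely containing it (a proper subset of a free basis generates a proper subgroup, which closes this last point, but it needs saying). So, judged as a standalone proof, the converse has a genuine gap; judged against the paper, you have proved strictly more than the paper does, since the authors prove nothing here and defer both directions to \cite{Magnus-Karrass-Solitar76}, and your forward half is exactly the simplification their remark advertises.
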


\begin{rem}
The interpretation of minimal Schreier systems in geometric terms of geodesic
spanning trees allows one to make the proof given in
\cite{Magnus-Karrass-Solitar76} (and reproducing the argument from
\cite{Karrass-Solitar58}) significantly simpler. For instance, the ``minimal
Schreier $\implies$ Nielsen'' part (another proof of which was first given in
\cite{Hall-Rado48}) becomes in these terms completely obvious.

\end{rem}

\emph{From now on, when considering a generating set $\S$ in a subgroup $H\le
F$, we shall always assume that $\S$ is a minimal Schreier system associated
with a geodesic spanning tree $T$ in the Schreier graph $X$.}

\section{Hopf decomposition of the boundary action}\label{sec:hopf}

The aim of this Section is to show that the decomposition of the boundary $\pt
F$ into a disjoint union of the Schreier limit set and the translates of the
Schreier fundamental domains obtained in \thmref{th:1} in fact provides the
Hopf decomposition of the boundary action of the subgroup $H$ with respect to
the uniform measure $\m$ (\thmref{th:2}).

\subsection{Conservativity and dissipativity}

Let $G$ be a countable group acting by \emph{measure class preserving
transformations} on a measure space $(\X,m)$, i.e., the measure $m$ is
\emph{quasi-invariant} under this action (for any group element $g\in G$ the
corresponding translated measure defined as $gm(A)=m(g^{-1}A)$ is equivalent
to $m$).

\medskip

\emph{Usually we shall denote the measure $m$ of a set $A$ just by $mA$,
although if necessary we may bracket either the measure or the set. Unless
otherwise specified, all the identities, properties etc.\ related to measure
spaces will be understood {\rm mod 0} (i.e., up to null sets)}.

\medskip

A measurable set $A\subset\X$ is called \emph{recurrent}  if for a.e.\ point
$x\in A$ the trajectory $Gx$ eventually returns to $A$, i.e., $gx\in A$ for a
certain element $g\in G$ other than the group identity $e$. Equivalently, $A$
is recurrent iff $A\subset\bigcup_{g\in G\setminus\{e\}} gA$. The opposite
notion is that of a \emph{wandering set}, i.e., a measurable set $A\subset\X$
with pairwise disjoint translates $gA,\,g\in G$.

The action of $G$ on $(\X,m)$ is called \emph{conservative} if any measurable
subset of positive measure is recurrent, and it is called \emph{dissipative}
if there exists a wandering set of positive measure. If the whole action space
is the union of translates of a certain wandering set, then the action is
called \emph{completely dissipative}.

\begin{rem} \label{rem:consup}
If a set $A\subset\X$ is recurrent with respect to a subgroup $G'\subset G$,
then it is obviously recurrent with respect to the whole group $G$. Therefore,
conservativity of the action of $G'$ implies conservativity of the action of
$G$.
\end{rem}

The action space always admits a unique \emph{Hopf decomposition}
$\X=\C\sqcup\D$ into a union of two disjoint $G$-invariant measurable sets
$\C$ and $\D$ (called the \emph{conservative} and the \emph{dissipative parts}
of the action, respectively) such that the restriction of the action to $\C$
is conservative and the restriction of the action to $\D$ is completely
dissipative, see \cite{Aaronson97} and the references therein. Hopf was also
the first to notice that for certain classes of dynamical systems the above
decomposition is trivial: any system from these classes is either conservative
or completely dissipative. In this situation one talks about the \emph{Hopf
alternative} (see \secref{sec:ergo} for more details).

\medskip

There is an important class of measure spaces called \emph{Lebesgue spaces}
(e.g., see \cite{Rohlin52,Cornfeld-Fomin-Sinai82}). Measure-theoretically
these are the measure spaces such that their non-atomic part is isomorphic to
an interval with the Lebesgue measure on it. There is also an intrinsic
definition of Lebesgue spaces based on their separability properties. However,
for our purposes it is enough to know that any \emph{Polish topological space}
(i.e., separable, metrizable, complete) endowed with a Borel measure is a
Lebesgue measure space, so that \emph{all the measure spaces considered in
this paper are Lebesgue.} A significant feature of Lebesgue spaces is that any
measure class preserving action of a countable group on a Lebesgue space
admits a (unique) \emph{ergodic decomposition} \cite[Theorem 6.6]{Schmidt77}.

For Lebesgue spaces the Hopf decomposition can also be described in terms of
the ergodic components of the action, see \cite{Kaimanovich10}. In the case
when the action is (essentially) \emph{free}, i.e., the stabilizers of almost
all points are trivial, this description is especially simple. Namely, $\C$ is
the union of all the ergodic components for which the corresponding
conditional measure is purely non-atomic, whereas $\D$ is the union of all the
purely atomic ergodic components (i.e., of the ergodic components which
consist of a single $G$-orbit; we shall call such orbits \emph{dissipative}).

Below we shall use the following explicit description of the conservative part
of an action in terms of its Radon--Nikodym derivatives.

\begin{thm}[\cite{Kaimanovich10}] \label{thp:3}
Let $(\X,m)$ be a Lebesgue space endowed with a free measure class preserving
action of a countable group $G$. Denote by $\mu_x,\,x\in\X$, the measure on
the orbit~$Gx$ defined as
$$
\mu_x(gx)=\frac{dg^{-1} m}{dm}(x)=\frac{dm(gx)}{dm(x)}
$$
(the measures $\mu_x$ corresponding to different points $x$ from the same
$G$-orbit are obviously proportional). Then for a.e.\ point $x\in X$ the
following conditions are equivalent:
\begin{itemize}
    \item[(i)]
    The orbit $Gx$ is dissipative;
    \item[(ii)]
    The measure $\mu_x$ is finite.
\end{itemize}
\end{thm}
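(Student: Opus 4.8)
The plan is to prove the equivalence of (i) and (ii) by unwinding the relationship between the orbit measures $\mu_x$ and the structure of the ergodic decomposition, using the description of the conservative and dissipative parts recalled just above the statement. Since the action is free, each orbit $Gx$ is in bijection with $G$ via $g\mapsto gx$, and the measure $\mu_x$ is simply the pushforward to $Gx$ of the Radon--Nikodym cocycle $g\mapsto \frac{dg^{-1}m}{dm}(x)$. The key observation is that $\mu_x$ is, up to the normalizing constant $m$ assigns, precisely the conditional measure of $m$ on the orbit (equivalently, on the corresponding ergodic component when the orbit itself is a component). So finiteness of $\mu_x$ is a measure-theoretic statement about whether the orbit carries finite conditional mass.

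\medskip

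First I would fix the cocycle identity $\mu_x(gx)=\frac{dm(gx)}{dm(x)}$ and verify that the measures $\mu_x$ and $\mu_{x'}$ for $x'=g_0 x$ in the same orbit are proportional, with ratio $\mu_x(g_0 x)$; this is the chain rule for Radon--Nikodym derivatives and makes the \emph{finiteness} of $\mu_x$ a property of the orbit rather than of the chosen base point. Next I would invoke the ergodic decomposition of $(\X,m)$ into ergodic components, together with the disintegration of $m$ into conditional measures on these components. The crucial point is that the conditional measure of $m$ on the orbit $Gx$ agrees (up to a scalar) with $\mu_x$: this follows because both are $G$-quasi-invariant measures on the same orbit with the same Radon--Nikodym cocycle, and on a single orbit such a measure is determined up to a scalar by the cocycle.

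\medskip

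For the implication (i) $\Rightarrow$ (ii), suppose $Gx$ is a dissipative orbit, i.e., an atomic ergodic component consisting of the single orbit. Then the conditional measure is a genuine probability measure supported on the (countable) orbit, hence finite; since $\mu_x$ is proportional to it, $\mu_x$ is finite. For the converse (ii) $\Rightarrow$ (i), the idea is to argue contrapositively: if the orbit $Gx$ lies in the conservative part, then its ergodic component is non-atomic, so $m$ assigns no positive conditional mass to the single orbit (which is a null set inside a non-atomic component). In that case the conditional measure does not restrict to a finite measure matching $\mu_x$ on the orbit, and one concludes $\mu_x(Gx)=\infty$. I expect this direction to be the main obstacle: one must rule out that a conservative orbit could still carry a finite orbit measure $\mu_x$, and the cleanest way is to use the Poincar\'e recurrence principle in the guise appropriate for quasi-invariant actions --- namely, that conservativity forces $\sum_{g\in G}\frac{dg^{-1}m}{dm}(x)=\infty$ for a.e.\ $x$, since a finite orbit sum would allow the construction of a wandering set of positive measure via a Borel--Cantelli type argument.

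\medskip

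Thus the heart of the matter is the standard but delicate fact that, for a free measure-class-preserving action, conservativity of the orbit is equivalent to divergence of the orbitwise sum of Radon--Nikodym derivatives (a Halmos/Hopf recurrence criterion), together with the identification of this orbitwise sum with the total mass $\mu_x(Gx)$. I would present these two ingredients --- the cocycle/proportionality bookkeeping and the recurrence criterion --- as the two pillars, and note that the transition between the purely measure-theoretic divergence criterion and the geometric ``dissipative orbit'' terminology is exactly what the ergodic-decomposition description preceding the theorem provides.
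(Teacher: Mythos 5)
First, a caveat: the paper does not actually prove this statement --- it is imported verbatim from \cite{Kaimanovich08ap} and used as a black box --- so there is no internal proof to compare yours against; I can only measure your proposal against the standard argument that the citation points to.

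Your two ``pillars'' (the cocycle identity making finiteness of $\mu_x(Gx)$ an orbit invariant, and the Hopf--Halmos recurrence criterion) are indeed the right ones, and your (i)$\Rightarrow$(ii) via conditional measures on atomic ergodic components is correct, though there is a shorter route: for any wandering set $W$ one has $\int_W \mu_x(Gx)\,dm(x)=\sum_g\int_W\frac{dm(gx)}{dm(x)}\,dm(x)=\sum_g m(gW)=m\bigl(\bigsqcup_g gW\bigr)<\infty$, so $\mu_x(Gx)<\infty$ a.e.\ on $W$ and hence on the whole dissipative part by orbit invariance. The genuine gap is in (ii)$\Rightarrow$(i). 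Your first attempt there --- the orbit is null inside a non-atomic ergodic component, so the conditional measure ``does not restrict to a finite measure matching $\mu_x$'', hence $\mu_x(Gx)=\infty$ --- is a non sequitur: the conditional measure of a non-atomic component gives the orbit mass zero and therefore carries no information about the (purely atomic, unrelated) measure $\mu_x$. You then correctly retreat to the recurrence criterion, but you leave its proof at the level of a name (``a Borel--Cantelli type argument''), and that is where all the work sits; moreover Borel--Cantelli is not really the mechanism. The concrete construction is: on the invariant set $D'=\{x:\mu_x(Gx)<\infty\}$ the function $\phi(x)=\mu_x(Gx)$ satisfies $\phi(gx)=\phi(x)/\mu_x(gx)$, so along each orbit $\phi$ attains its minimum on a nonempty \emph{finite} set (because $\mu_x(gx)\to 0$ when the sum converges), and at any minimizer $y$ one has $\mu_y(gy)\le 1$ for all $g$. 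The set $W=\{x:\mu_x(gx)<1\ \forall g\neq e\}$ is wandering, since $x\in W$ and $gx\in W$ with $g\neq e$ would force $\mu_x(gx)<1$ and $\mu_{gx}(x)=1/\mu_x(gx)<1$ simultaneously; after a measurable tie-break among the finitely many minimizers one obtains a wandering set meeting every orbit of $D'$, so $D'$ lies in the dissipative part. Supplying this construction (or an explicit appeal to Hopf's criterion for nonsingular actions of countable groups) is what your write-up still needs; freeness of the action is used exactly here, to make sense of ``wandering'' pointwise on orbits.
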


\subsection{The uniform measure and the Busemann function}

Given a group element $g\in F$ we shall denote by $C_g=C_{\s(g)}\subset\pt F$
the associated \emph{cylinder set} of dimension $|g|$, which is the set of all
the infinite words which begin with the word $\s(g)$:
$$
C_g = \{\o\in\pt F: [\o]_{|g|}=g \} \;.
$$
Geometrically, $C_g$ is the ``shadow'' of $g$, i.e., the set of the endpoints
of all the geodesic rays issued from the group identity $e$ and passing
through the point~$g$.

Denote by $\m$ the probability measure on $\pt F\cong\bA^\infty_r$ which is
\emph{uniform with respect to the generating set $\A$}. In other words, all
the cylinder sets of the same dimension have equal measure
\begin{equation} \label{eq:cylmeas}
\m C_g = \frac1{2m(2m-1)^{|g|-1}} \qquad\forall\,g\in F\setminus\{e\}\;,
\end{equation}
where $m=|\A|$ is the number of generators of the group $F$.

\medskip

The Radon--Nikodym derivatives of $\m$ have a natural geometric
interpretation. Let us first remind the corresponding notions. Given a point
$\o\in\pt F$, the associated \emph{Busemann cocycle} is defined as
$$
\b_\o(g_1, g_2) = \lim_{g\to\o} \bigl[ d(g_2,g) - d(g_1,g) \bigr] = d(g_2,
g_1\wedge_\o g_2) - d(g_1, g_1\wedge_\o g_2) \;, \qquad g_1,g_2\in F \;,
$$
where $d(g,g')=|g^{-1}g'|$ is the distance in the Cayley graph $\G(F,\bA)$,
and $g_1\wedge_\o g_2$ denotes the starting point of the common part of the
geodesic rays $[g_1,\o)$ and $[g_2,\o)$, see \figref{fig:bus}. Thus,
$\b_\o(g_1, g_2)$ is a regularization of the formal expression $``d(g_2,\o) -
d(g_1,\o)\text{''}$. It can also be written as
$$
\b_\o(g_1,g_2) = b_\o(g_2) - b_\o(g_1) \;,
$$
where
$$
b_\o(g) = \b_\o(e,g) = \lim_{n\to\infty} \bigl[ d(g,[\o]_n) - n \bigr]
$$
is the \emph{Busemann function} associated with the point $\o\in\pt F$ (or,
geometrically, with the corresponding geodesic ray $[e,\o)$).

\begin{figure}[h]
\begin{center}
     \psfrag{e}[rb][rb]{$e$}
     \psfrag{a}[rb][rb]{$g_1$}
     \psfrag{b}[l][l]{$g_2$}
     \psfrag{g}[rt][rt]{$g_1\wedge_\o g_2$}
     \psfrag{o}[l][l]{$\o$}
          \includegraphics[scale=.75]{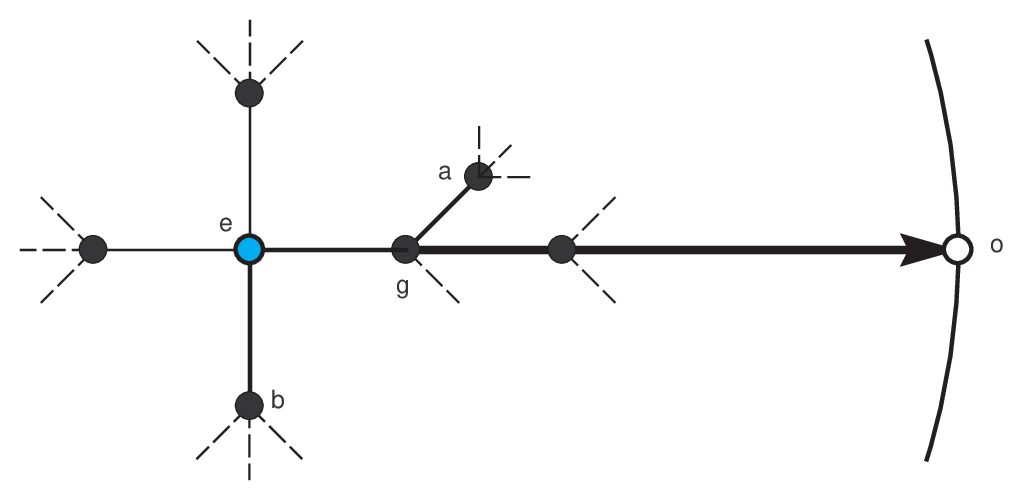}
          \end{center}
          \caption{}
          \label{fig:bus}
\end{figure}

For two words $w_1,w_2\in\bA^*\cup\bA^\infty$ denote by $w_1\wedge w_2$ their
\emph{confluent} (i.e., the longest common initial segment), and by
\begin{equation} \label{eq:groprod}
(w_1|w_2)=|w_1\wedge w_2|
\end{equation}
their \emph{Gromov product} \cite{Gromov87}, see \figref{fig:prod}. Then the
Busemann function is connected with the Gromov product by the formula
\begin{equation} \label{eq:GB}
b_\o(g) = |g| - 2 (g|\o) \;.
\end{equation}

\begin{figure}[h]
\begin{center}
     \psfrag{e}[rb][rb]{$e$}
     \psfrag{a}[r][r]{$g_2$}
     \psfrag{b}[r][r]{$g_1$}
     \psfrag{o}[rb][rb]{$g_1\wedge g_2$}
          \includegraphics[scale=.75]{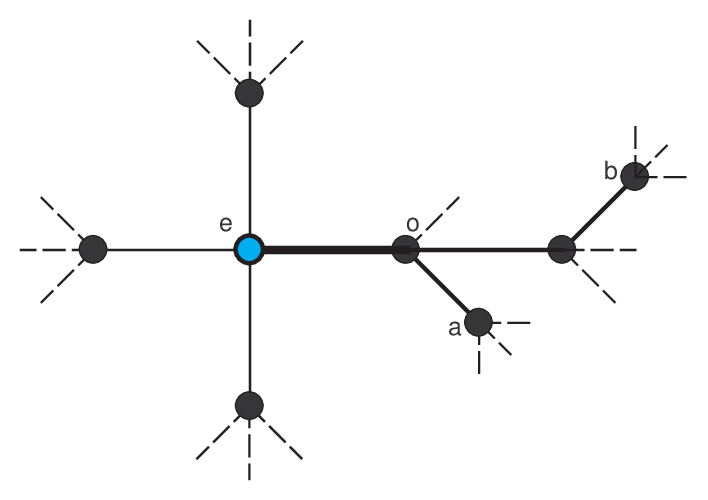}
          \end{center}
          \caption{}
          \label{fig:prod}
\end{figure}

The following property of the measure $\m$ is well-known, and can easily be
established by comparing measures of cylinder sets.

\begin{prop} \label{pr:RN}
The measure $\m$ is quasi-invariant with respect to the action of $F$ on $\pt
F$, and its Radon--Nikodym cocycle is
\begin{equation} \label{eq:RN=B}
\frac{dg\m}{d\m}(\o) = (2m-1)^{-b_\o(g)} \qquad \forall\, g\in F\;
\forall\,\o\in\pt F\;.
\end{equation}
\end{prop}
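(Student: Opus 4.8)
The plan is to verify the formula directly on cylinder sets, which generate the Borel $\sigma$-algebra of $\pt F$, exploiting the fact that for fixed $g$ the candidate density $\o\mapsto(2m-1)^{-b_\o(g)}$ is locally constant. Recall that $g\m(A)=\m(g^{-1}A)$, so it suffices to show that for every cylinder $C_h$ of sufficiently large dimension one has $\m(g^{-1}C_h)=(2m-1)^{-\beta}\,\m(C_h)$, where $\beta$ is the (constant) value of $b_\o(g)$ for $\o\in C_h$; together with positivity and finiteness of the density this yields both quasi-invariance and the asserted Radon--Nikodym cocycle \eqref{eq:RN=B}.

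The core computation fixes $\o\in\pt F$ and $g\in F$ and examines the cylinders $C_{[\o]_n}$ shrinking to $\o$. First I would check that for $n\ge(g|\o)$ the map $\o'\mapsto g^{-1}\o'$ carries $C_{[\o]_n}$ bijectively onto the single cylinder $C_{g^{-1}[\o]_n}$, where $g^{-1}[\o]_n$ denotes the freely reduced word representing the group product. Granting this, since $|g^{-1}[\o]_n|=d(g,[\o]_n)$ and $|[\o]_n|=n$, the cylinder measure formula \eqref{eq:cylmeas} gives at once
\[
\frac{\m(g^{-1}C_{[\o]_n})}{\m(C_{[\o]_n})}=\frac{\m(C_{g^{-1}[\o]_n})}{\m(C_{[\o]_n})}=(2m-1)^{\,n-d(g,[\o]_n)}\;.
\]

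It remains to identify the exponent. Writing $u=[\o]_k$ with $k=(g|\o)$ for the confluent $g\wedge\o$, for $n\ge k$ the reduced word $g^{-1}[\o]_n$ has length $|g|+n-2k$, whence $d(g,[\o]_n)-n=|g|-2(g|\o)=b_\o(g)$ by \eqref{eq:GB}; in particular $b_{\o'}(g)$ takes the constant value $b_\o(g)$ for all $\o'\in C_{[\o]_k}$. Thus the ratio above equals $(2m-1)^{-b_\o(g)}$ for every $n\ge k$, and integrating the locally constant function $(2m-1)^{-b_\cdot(g)}$ against $\m$ over any cylinder reproduces $g\m$ on that cylinder; by uniqueness of the Radon--Nikodym derivative this proves \eqref{eq:RN=B}. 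Since the density is everywhere finite and strictly positive, $\m$ is quasi-invariant.

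The one delicate point, which I would treat first, is the cylinder identity $g^{-1}C_{[\o]_n}=C_{g^{-1}[\o]_n}$: it amounts to showing that no unexpected free cancellation occurs when $g^{-1}$ is applied to a word extending $[\o]_n$. This is where the hypothesis $n\ge(g|\o)$ is used --- once the truncation has passed the branch point $g\wedge\o$, the freely reduced word $g^{-1}[\o]_n$ ends in a letter that the defining property of the confluent prevents the continuation of $\o'$ from cancelling. Everything else is routine bookkeeping of lengths in the tree.
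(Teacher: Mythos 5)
Your proof is correct and follows essentially the same route as the paper's: both verify \eqref{eq:RN=B} by computing $\m(g^{-1}C_{g'})/\m(C_{g'})$ on cylinders deep enough that $g^{-1}C_{g'}=C_{g^{-1}g'}$, and then identifying the exponent $|g'|-|g^{-1}g'|$ with $-b_\o(g)$ via the cylinder measure formula \eqref{eq:cylmeas} and \eqref{eq:GB}. The only quibble is an off-by-one: the cylinder identity and the constancy of $b_{\cdot}(g)$ require $n>(g|\o)$ rather than $n\ge(g|\o)$ (when $|g|>(g|\o)$, a point of $C_{[\o]_{(g|\o)}}$ may agree with $g$ beyond the confluent, producing extra cancellation), which the paper sidesteps by simply requiring $|g'|>|g|$.
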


\begin{proof}
Since $gC_{g'}=C_{gg'}$ for any $g'\in F$ with $|g'|>|g|$, we have that under
this condition
$$
g\m \left(C_{g'}\right) = \m \left(g^{-1} C_{g'}\right) = \m C_{g^{-1}g'} =
\frac1{2m(2m-1)^{|g^{-1}g'|-1}} \;,
$$
whence
$$
\frac{g\m C_{g'}}{\m C_{g'}} = (2m-1)^{|g'|-|g^{-1}g'|} = (2m-1)^{-b_\o(g)}
\qquad \forall\,\o\in C_{g'} \;.
$$
\end{proof}

\begin{rem} \label{rem:RN}
The objects appearing in the left-hand and the right-hand sides of formula
\eqref{eq:RN=B} are of different nature. The Radon--Nikodym derivatives in the
left-hand side are \emph{a~priori} defined almost everywhere only, whereas the
Busemann function in the right-hand side is a \emph{bona fide} continuous
function on the boundary. It is a commonplace that such an equality is
interpreted as saying that there is a version of the Radon--Nikodym derivative
in the left-hand side given by the individually defined function in the
right-hand side.
\end{rem}

\subsection{Inequalities for the Busemann function}

The following auxiliary properties of the Busemann function are used in the
proof of \thmref{th:2} below and later on.

\begin{prop} \label{pr:RNOmega}
If $\o=\s^\infty(\xi)\in\O$ for $\xi=s_1s_2\dots\in\bS^\infty_r\cong\pt H$,
and $h=[\xi]_n=s_1s_2\dots s_n$, then $b_\o(h) \le 0$.
\end{prop}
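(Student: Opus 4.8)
The plan is to use the explicit description of $\s^\infty(\xi)$ from \eqref{eq:asymp} together with the Gromov-product formula \eqref{eq:GB} for the Busemann function, namely $b_\o(h) = |h| - 2(h|\o)$. Here $h=[\xi]_n$, so the word $\s(h)$ is precisely the initial portion of the infinite word $\s^\infty(\xi)=\s(\o)$ up through the factor $\s_0(s_n)$ in the expansion \eqref{eq:asymp}; in particular $\s(h)$ is an initial segment of $\s(\o)$, so the confluent $h\wedge\o$ equals $h$ itself. This immediately gives $(h|\o)=|h\wedge\o|=|\s(h)|=|h|$ (distance in the Cayley graph), and hence $b_\o(h)=|h|-2|h|=-|h|\le 0$.

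The one point requiring care is the claim that $\s(h)$ really is an initial segment of $\s(\o)=\s^\infty(\xi)$, i.e.\ that no cancellation in the expansion \eqref{eq:asymp} eats into the portion corresponding to $h=[\xi]_n$. First I would invoke \lemref{lem:dec}: by \eqref{eq:decprod} the freely reduced word $\s(h)$ is
\begin{equation*}
\s(h) = \s_-(s_1)\s_0(s_1)\a(s_1,s_2)\s_0(s_2)\dots\a(s_{n-1},s_n)\s_0(s_n)\s_+(s_n),
\end{equation*}
while by \eqref{eq:asymp} the infinite word $\s(\o)$ begins with
\begin{equation*}
\s_-(s_1)\s_0(s_1)\a(s_1,s_2)\s_0(s_2)\dots\a(s_{n-1},s_n)\s_0(s_n)\,\a(s_n,s_{n+1})\s_0(s_{n+1})\dots.
\end{equation*}
These two words agree through the letter $\s_0(s_n)$, so they share the common prefix $\s_-(s_1)\s_0(s_1)\dots\s_0(s_n)$, which is $\s(h)$ minus its trailing block $\s_+(s_n)$. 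Thus $\s(h)$ and $\s(\o)$ differ only in that $\s(h)$ ends with $\s_+(s_n)$ whereas $\s(\o)$ continues with $\a(s_n,s_{n+1})=\ov{\s_+(s_n)\s_-(s_{n+1})}$, whose reduction may cancel part (or all) of $\s_+(s_n)$.

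This means $\s(h)$ need not literally be an initial segment of $\s(\o)$, so the crude computation $b_\o(h)=-|h|$ is too optimistic; the confluent $(h|\o)$ is the length of the common prefix $\s_-(s_1)\s_0(s_1)\dots\s_0(s_n)$, which equals $|h|-|\s_+(s_n)|=|h|-|\s_+(s_n)|$. Plugging into \eqref{eq:GB} gives $b_\o(h)=|h|-2(|h|-|\s_+(s_n)|)=-|h|+2|\s_+(s_n)|$, which is \emph{not} obviously $\le 0$. The main obstacle is therefore to control this remainder: I expect the inequality to follow because the cancellation in $\a(s_n,s_{n+1})$ removes the suffix $\s_+(s_n)$ but the geodesic tree-segment interpretation from \lemref{lem:dec} shows the common prefix actually reaches the branch point $y_n\wedge x_{n+1}$ in $T$, so $(h|\o)\ge |h|-|\s_+(s_n)|$ with equality only in the extreme case. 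The cleanest route, which I would ultimately take, is geometric: $\pi(\o)$ is the infinite backtrack-free path in $X$ through $\E_{s_1},\dots,\E_{s_n},\E_{s_{n+1}},\dots$, and $h$ corresponds to the vertex reached after passing through $\E_{s_n}$ and then returning toward $o$ along $T$; since the ray $\pi(\o)$ leaves $h$ by continuing through $T$ toward $x_{n+1}$ rather than retracing back to $o$, the Busemann function measures how far $\o$ ``pulls away'' from $o$ past $h$, and the minimality of $T$ (the geodesic-tree hypothesis, giving \eqref{eq:triangle}) forces $b_\o(h)\le 0$. I would verify this by writing $b_\o(h)=\lim_n[d(h,[\o]_k)-k]$ directly and reading off from \figref{fig:many} that $h$ lies on the geodesic ray $[e,\o)$ up to bounded error controlled by $|\s_+(s_n)|-|\s_-(s_{n+1})|\le 1$.
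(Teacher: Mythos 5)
Your setup is the same as the paper's: you use $b_\o(h)=|h|-2(h|\o)$ from \eqref{eq:GB}, match the expansions \eqref{eq:decprod} and \eqref{eq:asymp} to see that $\s(h)$ and $\s(\o)$ share the prefix ending with $\s_0(s_n)$, and correctly reduce the claim to the inequality $2|\s_+(s_n)|\le|h|$. That is exactly the right obstacle to isolate. But you do not actually prove it, and the mechanism you propose for closing the gap does not work. The inequality \eqref{eq:triangle} compares $|\s_-(s)|$ and $|\s_+(s)|$ for the \emph{same} generator $s$; it says nothing about $|\s_+(s_n)|-|\s_-(s_{n+1})|$, and in any case the assertion that $h$ lies on the geodesic ray $[e,\o)$ ``up to bounded error'' is false: the distance from $h$ to that ray is of order $|\s_+(s_n)|=d_T(o,y_n)$, which is unbounded as $n$ grows for a generic $\o\in\O$. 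Even if it were bounded, that would only give $b_\o(h)\le C$ for some constant, not $\le 0$.

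The correct (and short) way to finish, which is what the paper does, is the following. Write $|h|=D+L$, where $D$ is the length of the path $\pi(h)$ from $o$ up to the endpoint $y_n$ of the edge $\E_{s_n}$, and $L=|\s_+(s_n)|$ is the length of the closing segment $[y_n,o]_T$. Since $T$ is a \emph{geodesic} spanning tree (this is precisely where that hypothesis enters), $L=d_T(o,y_n)=d_X(o,y_n)$; on the other hand the first portion of $\pi(h)$ is a path in $X$ from $o$ to $y_n$ of length $D$, so the triangle inequality in $X$ gives $L\le D$. Combined with your (correct) estimate $(h|\o)\ge D$, this yields $b_\o(h)\le (D+L)-2D=L-D\le 0$. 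So the idea you were missing is not \eqref{eq:triangle} but the equality $|\s_+(s_n)|=d_X(o,y_n)$ together with the elementary comparison of the two halves of the cycle $\pi(h)$.
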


\begin{proof}
Denote by $[x_i,y_i]=\E_{s_i}\in\Edges(X)\setminus\Edges(T)$ the oriented
edges corresponding to the generators $s_i\in\bS$ (see \thmref{th:span}). The
path $\pi(h)$ in $X$ (see \propref{pr:1}) starts from the origin $o$, passes
consecutively through the points $x_1,y_1,\dots,x_n,y_n$, and returns to $o$.
The segments $[o,x_1],[y_1,x_2],\dots,[y_{n-1},x_n],[y_n,o]$ in this path are
obtained by joining their endpoints in the spanning tree $T$, and the segments
$[x_1,y_1],\dots,[x_n,y_n]$ are just the corresponding edges $\E_{s_i}$, see
\lemref{lem:dec}. Denote by $D$ the length of the path $\pi(h)$ from the
beginning until the point $y_n$, and by $L$ the length of the remaining
segment $[y_n,o]$, so that the total length of this path is $|h|=D+L$. Since
$T$ is a \emph{geodesic} spanning tree (it is here that we use this
condition), $L$ is the distance between $y_n$ and $o$ in the graph $X$, so
that by the triangle inequality $L\le D$.

The infinite path $\pi(\o)$ also starts from the point $o$. It passes
consecutively through the points $x_1,y_1,\dots,x_n,y_n,\dots$. The segments
$[o,x_1],[y_1,x_2],\dots,[y_{n-1},x_n],\dots$ are obtained by joining their
endpoints in the spanning tree $T$, and the segments
$[x_1,y_1],\dots,[x_n,y_n],\dots$ are the edges $\E_{s_i}$. Therefore,
$(h|\o)\ge D$, and by \eqref{eq:GB}
$$
b_\o(h) = |h| - 2(h|\o) \le (D+L) -2D = L-D \le 0 \;.
$$
\end{proof}

\begin{prop} \label{pr:RNDelta}
If $\o\in\De$, then $b_\o(h)\ge 0$ for any $h\in H$.
\end{prop}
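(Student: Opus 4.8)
The plan is to mirror the proof of \propref{pr:RNOmega}, exploiting the identity $b_\o(h)=|h|-2(h|\o)$ from \eqref{eq:GB}: it suffices to show that the Gromov product satisfies $(h|\o)\le|h|/2$. The case $h=e$ is trivial since $b_\o(e)=0$, so I would write $h=s_1s_2\dots s_n\in\bS^*_r$ with $n\ge1$ and let $\E_{s_1}=[x_1,y_1]\in\Edges(X)\setminus\Edges(T)$ be the edge associated with the first generator $s_1$ (see \thmref{th:span}).

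First I would bound the confluent from above. By \eqref{eq:decprod} the reduced word $\s(h)$ begins with the segment $\s_-(s_1)\s_0(s_1)$. On the other hand, by the very definition of the Schreier fundamental domain, $\o\in\De$ means that $\s(\o)$ does \emph{not} begin with $\s_-(s)\s_0(s)$ for any $s\in\bS$, in particular not with $\s_-(s_1)\s_0(s_1)$. Were $\s(\o)$ to agree with $\s(h)$ on the first $|\s_-(s_1)|+1$ letters, these would be exactly $\s_-(s_1)\s_0(s_1)$, a contradiction; hence the two words agree on at most the first $|\s_-(s_1)|$ letters, i.e.
\[
(h|\o)\le|\s_-(s_1)|=|x_1|_T=d_T(o,x_1),
\]
the middle equality being \eqref{eq:s-+}.

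Next I would bound $|h|$ from below by analysing the path $\pi(h)$ in $X$ (\propref{pr:1}), exactly as in \propref{pr:RNOmega}. This closed path has length $|h|$, starts at $o$, reaches $x_1$ along the tree geodesic $[o,x_1]_T$, and eventually returns to $o$. Its initial portion up to $x_1$ has length $d_T(o,x_1)$, while the remaining portion is a backtrack-free path in $X$ from $x_1$ back to $o$ and so has length at least $d_X(x_1,o)$. Since $T$ is a \emph{geodesic} spanning tree we have $d_T(o,x_1)=d_X(o,x_1)$; combining the two portions gives $|h|\ge 2d_X(o,x_1)=2d_T(o,x_1)\ge 2(h|\o)$. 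Substituting into \eqref{eq:GB} yields $b_\o(h)\ge0$.

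The only delicate points are the two uses of the geodesic property of $T$: identifying $|\s_-(s_1)|$ with the graph distance $d_X(o,x_1)$, and bounding the return portion of $\pi(h)$ from below by that same distance. I expect the main (and modest) obstacle to be the clean justification that the first divergence of $\s(\o)$ from $\s(h)$ occurs no later than the letter $\s_0(s_1)$ — which is precisely what membership in $\De$ guarantees; everything else reduces to the triangle inequality for the length of a closed path.
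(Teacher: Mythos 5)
Your proof is correct and follows essentially the same route as the paper's: both reduce to showing $|h|\ge 2(h|\o)$ via \eqref{eq:GB}, bound $(h|\o)$ above by $d_T(o,x_1)$ using that $\o\in\De$ forbids the prefix $\s_-(s_1)\s_0(s_1)$, and bound $|h|$ below by $2d_X(o,x_1)$ by splitting the cycle $\pi(h)$ at $x_1$ and invoking the geodesic property of $T$. Your write-up is merely a more explicit version of the paper's two-line argument, and correctly flags the two places where minimality of the Schreier system is used.
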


\begin{proof}
This is the same argument as in the proof of \propref{pr:RNOmega}. In view of
formula \eqref{eq:GB} we have to show that $|h|\ge 2(h|\o)$. Let us split the
cycle $\pi(h)$ in $X$ associated with $h$ into two parts. The first one is the
geodesic segment from $o$ to the beginning $x_1$ of the oriented edge
$[x_1,y_1]$ corresponding to the first letter $s_1\in\bS$ of $h$, and the
second one is the rest of $\pi(h)$. By the triangle inequality the length of
the second part is at least $d_X(o,x_1)$, so that the total length $|h|$ of
the path $\pi(h)$ is at least $2d_X(o,x_1)$. On the other hand, $(h|\o)\le
d_X(o,x_1)$, because $\o$ does not pass through $[x_1,y_1]$.
\end{proof}

\begin{prop} \label{pr:geod}
If a point $\o\in\pt F$ corresponds to a geodesic ray in $X$, then $b_\o(h)\ge
0$ for any $h\in H$.
\end{prop}

\begin{proof}
This is again the same argument consisting in comparing the length of a
geodesic subsegment in the cycle $\pi(h)$ with its total length. Let
$g=h\wedge\o$, and $x=og$. It means that the cycle $\pi(h)$ first follows the
path $\r$ determined by $\o$, until it reaches the point $x$, after which it
somehow returns to the origin $o$. Since $\r$ is a geodesic ray, the length of
the first part of the cycle (from the origin $o$ to the point $x$ along the
ray $\r$) does not exceed the length of the remaining part, whence
$(h|\o)=d_X(o,x)\le |h|/2$ which implies the claim.
\end{proof}

\subsection{The Hopf decomposition of the boundary action}

Now we are ready to prove the main result of this Section:

\begin{thm}\label{th:2}
The Schreier limit set $\O\subset\pt F$ determined by a geodesic spanning tree
in the Schreier graph $X\cong H\bs F$ ($\equiv$ by a minimal Schreier
generating system) coincides \textup{(mod 0)} with the conservative part of
the action of the subgroup $H\le F$ on the boundary $\pt F$ with respect to
the uniform measure $\m$.
\end{thm}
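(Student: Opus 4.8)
The plan is to apply the explicit Radon–Nikodym criterion from \thmref{thp:3}: since the action of $H$ on $(\pt F,\m)$ is free and the measure class is preserved, the conservative part consists \textup{(mod 0)} exactly of those points $\o$ for which the orbital measure $\mu_\o$ is infinite, while the dissipative part consists of those $\o$ for which $\mu_\o$ is finite. By \propref{pr:RN} the relevant Radon–Nikodym derivative is $\tfrac{dh\m}{d\m}(\o)=(2m-1)^{-b_\o(h)}$, so
\begin{equation} \label{eq:orbsum}
\mu_\o(H) = \sum_{h\in H} (2m-1)^{-b_\o(h)} \;,
\end{equation}
and the whole theorem reduces to showing that this series diverges for a.e.\ $\o\in\O$ and converges for a.e.\ $\o\in\De$ (hence, by $H$-invariance of both the decomposition and the dissipative/conservative parts, for a.e.\ $\o$ in each translate $h\De$). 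In view of the decomposition $\pt F=\bigl(\bigsqcup_{h\in H}h\De\bigr)\sqcup\O$ from \thmref{th:1}, it suffices to treat the two pieces $\O$ and $\De$ separately.

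First I would handle the dissipative part. For $\o\in\De$ the key input is \propref{pr:RNDelta}, which gives $b_\o(h)\ge 0$ for every $h\in H$; but this alone only bounds each term by $1$, which is not summable. The point is that the minimality of the Schreier system (the geodesic spanning tree) forces $b_\o(h)$ to grow roughly like $|h|$ along the tree. Concretely, using the decomposition \eqref{eq:decprod} and the geometric interpretation of $b_\o(h)=|h|-2(h|\o)$ via the Gromov product, one shows that the Busemann function is bounded below by a fixed positive multiple of the word length $|h|_\S$ (the number of Nielsen–Schreier generators in $h$), so that the series \eqref{eq:orbsum} is dominated by a convergent series comparable to the Poincaré-type series $\sum_{h\in H}(2m-1)^{-c|h|_\S}$. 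The cleanest route is to compare with the number-of-generators filtration: since $\De$ consists of rays that stay in the geodesic tree $T$ and $\o$ is read off $T$, any $h$ with small $b_\o(h)$ must have its cycle $\pi(h)$ share a long geodesic prefix with $\o$, and the tree structure limits how many such $h$ there can be at each level — this makes $\mu_\o(H)<\infty$ for every $\o\in\De$, not merely almost every.

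For the conservative part, \propref{pr:RNOmega} provides the reverse bound $b_\o(h)\le 0$ along the truncations $h=[\xi]_n$ when $\o=\s^\infty(\xi)\in\O$. Thus each of the infinitely many initial segments $h=[\xi]_n$ contributes a term $(2m-1)^{-b_\o(h)}\ge 1$ to the series \eqref{eq:orbsum}, giving
\begin{equation} \label{eq:Odiv}
\mu_\o(H) \ge \sum_{n\ge 0} (2m-1)^{-b_\o([\xi]_n)} \ge \sum_{n\ge 0} 1 = \infty
\end{equation}
for \emph{every} $\o\in\O$. Hence $\O$ lies entirely in the conservative part. Combined with the previous paragraph, $\De$ (and each translate $h\De$) lies in the dissipative part, and the two statements together with \thmref{th:1} and \thmref{thp:3} identify $\O$ with the conservative part \textup{(mod 0)}.

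The main obstacle is the dissipative direction. The divergence on $\O$ is essentially free from \propref{pr:RNOmega}, as \eqref{eq:Odiv} shows; the genuine work is to upgrade the weak estimate $b_\o(h)\ge 0$ on $\De$ into a quantitative lower bound linear in $|h|_\S$ that guarantees summability. This is exactly where minimality (geodesicity of $T$, equivalently inequality \eqref{eq:triangle}) is indispensable: without it the components $\s_-(s),\s_+(s)$ could be arbitrarily unbalanced, the Busemann values could fail to grow, and $\mu_\o$ could be infinite even on the fundamental domain. I expect the careful bookkeeping of cancellations in \eqref{eq:decprod} — tracking how much of each $\s_\pm$ survives free reduction and how this controls $(h|\o)$ — to be the technically delicate step.
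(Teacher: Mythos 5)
Your treatment of the conservative direction is exactly the paper's: \propref{pr:RNOmega} gives $b_\o([\xi]_n)\le 0$ for all $n$, so the orbital sum $\sum_{h\in H}(2m-1)^{-b_\o(h)}$ diverges at every point of $\O$, and \thmref{thp:3} together with \propref{pr:RN} (and essential freeness of the action, which holds because every non-trivial element of $F$ has exactly two boundary fixed points) puts $\O$ inside the conservative part. The gap is in the dissipative direction, where your strategy is both unnecessary and unworkable. It is unnecessary because by \thmref{th:1} the translates $h\De$, $h\in H$, are pairwise disjoint, and $\De$ is measurable by \propref{pr:meas}; hence $\De$ is a wandering set and $\bigsqcup_h h\De$ lies in the dissipative part by the very definition of the Hopf decomposition. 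No Radon--Nikodym estimate is needed here, and this one-line observation already gives $\C\subset\O$.

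It is unworkable because the pointwise statement you aim for --- that $\mu_\o(H)=\sum_{h\in H}(2m-1)^{-b_\o(h)}<\infty$ for \emph{every} $\o\in\De$ --- is false. The paper's own example separating $\La^{horB}$ from $\Si$ in \secref{sec:exincl} produces a boundary point $\o$ whose path is a geodesic ray lying entirely inside the geodesic spanning tree $T$ (so $\o\in\De$ by \propref{pr:2}), for which nevertheless $\sum_h(2m-1)^{-b_\o(h)}=\infty$: the generators $s_n$ there satisfy $b_\o(s_n)=2d_n+1$ with $\sum_n(2m-1)^{-2d_n}=\infty$. This also kills the proposed lower bound $b_\o(h)\ge c\,|h|_\S$ as a route to convergence: when $H$ is infinitely generated the comparison series $\sum_{h}(2m-1)^{-c|h|_\S}$ diverges already over the infinitely many generators. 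The criterion of \thmref{thp:3} is only an a.e.\ statement, and the set $\De\cap\Si$ is a null set rather than empty --- which is precisely why the paper handles this direction by the wandering-set argument and reserves the orbital-sum criterion for the inclusion $\O\subset\Si$.
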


\begin{proof}
Since the Schreier fundamental domain $\De$ is measurable (\propref{pr:meas}),
\thmref{th:1} implies that the dissipative part $\D$ of the action is at least
the union $\bigcup_h h\De$, so that the conservative part $\C$ of the action
is contained in $\O$. For showing that $\C=\O$ we shall introduce the
$H$-invariant set
\begin{equation} \label{eq:Si}
\Si=\left\{\o\in\pt F: \sum_{h\in H} (2m-1)^{-b_\o(h)} = \infty \right\} \;.
\end{equation}
Any non-trivial element $g\in F$ has two fixed points on $\pt F$ (the
attracting and the repelling ones). Therefore, the boundary action is
essentially free with respect to any purely non-atomic quasi-invariant
measure. Thus, by \propref{pr:RN} and the criterion from \thmref{thp:3}, the
set $\Si$ coincides (mod 0) with the conservative part $\C$, and it remains to
show that $\O\subset\Si$, which follows at once from \propref{pr:RNOmega}
above.
\end{proof}

\begin{rem} \label{rem:1}
A priori, the set $\O$ depends on the choice of Nielsen--Schreier generating
system $\S$ in $H$ ($\equiv$ of a geodesic spanning tree in the Schreier graph
$\equiv$ of a minimal Schreier transversal). However, \thmref{th:2} shows that
the sets $\O$ for different choices of $\S$ only differ by a subset of
$\m$-measure $0$.
\end{rem}

\begin{rem}
It is likely that \thmref{th:2} also holds for many other boundary measures.
It would be interesting to investigate this question further. What (if any)
are the examples of purely non-atomic quasi-invariant measures on $\pt F$, for
which the conservative part is strictly smaller than the Schreier limit set
$\O$?
\end{rem}

\section{Limit sets and the core} \label{sec:limit-sets}

In this Section we compare the Schreier limit set $\O$ with several other
limit sets. In particular, we show that $\O$ (mod 0) coincides with the
horospheric limit sets (\thmref{th:coinc}). In this discussion we use
connections with random walks and with several extensions of the boundary
action.

\subsection{Hanging branches and the core} \label{sec:hang}

A \emph{branch} of a regular tree is a subtree which has one vertex (the
\emph{root}) of degree~$1$ and all the other vertices (which form its
\emph{interior}) are of full degree. Such a branch is uniquely determined by
its \emph{stem}, which is the oriented edge going from the root to the
interior of the branch, see \figref{fig:stem}.

\begin{figure}[h]
\begin{center}
          \includegraphics[scale=.75]{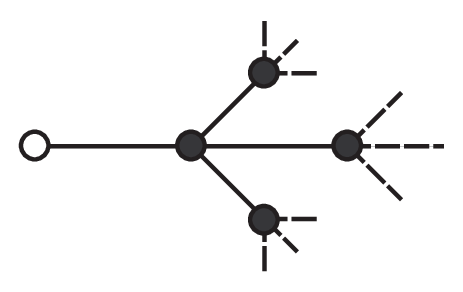}
          \end{center}
          \caption{}
          \label{fig:stem}
\end{figure}

\begin{defn}
A subgraph of the Schreier graph $X$ isomorphic to a branch in the Cayley
graph of $F$ (with its labelling) is called a \emph{hanging branch}. The
subgraph $X_*\subset X$ obtained by removing from $X$ all the hanging branches
(i.e., all their edges and all the interior vertices) is called the
\emph{core} of $X$.
\end{defn}

With the exception of the trivial case when the Schreier graph $X$ is a tree,
i.e., $H=\{e\}$ (which we shall always exclude below), the core $X_*$ is
non-empty. Any hanging branch is contained in a unique maximal hanging branch,
and maximal hanging branches are precisely those whose root belongs to the
core. In other words, the graph $X$ is obtained from the core $X_*$ by filling
the deficient valencies of the vertices of $X_*$ with maximal hanging branches
(so that all the degrees of the resulting graph have the full valency
$|\bA|$). Thus, since the Schreier graph $X$ is connected, its core $X_*$ is
also connected.

\begin{rem}
The definition of the core of a graph as what is left after removing all the
subtrees is due to Gersten \cite{Gersten83,Stallings83}. See
\cite{Stallings91,Kapovich-Myasnikov02} for an exposition of the ensuing
approach of Stallings to the study of subgroups of free groups based on the
notion of a \emph{folding} of graphs. Note that, following \cite[Section
7.2]{Stallings83} we are talking about the \emph{absolute} core of a graph
which is independent of the choice of a reference vertex. Some authors (e.g.,
\cite{Bahturin-Olshanskii10}) use a different definition, according to which
the (relative) core is the union of all reduced loops in the Schreier graph
$X$ starting from a chosen reference point $o\in X$. The absolute and the
relative cores coincide if and only if the reference vertex $o$ lies in the
absolute core.
\end{rem}

The following property is, of course, known to specialists (moreover, it is
basically the \emph{raison d'\^etre} of the definition of the core).

\begin{prop} \label{pr:finite}
A subgroup $H\le F$ is finitely generated if and only if the core $X_*$ of the
associated Schreier graph $X$ is finite.
\end{prop}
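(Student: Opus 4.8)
The plan is to prove both directions of the equivalence by relating finite generation of $H$ to the structure of the core $X_*$, using the Nielsen--Schreier correspondence from \thmref{th:span}. Recall that a geodesic spanning tree $T$ in $X$ gives a free generating system $\S$ whose elements are in one-to-one correspondence with the oriented edges of $\Edges(X)\setminus\Edges(T)$. Thus the rank of $H$ equals the number of \emph{geometric} edges (unoriented pairs) not lying in $T$, i.e., the first Betti number of $X$. Since $\S$ freely generates $H$, finite generation of $H$ is equivalent to $X$ having only finitely many edges outside $T$, which is a statement about cycles in $X$ and is therefore independent of the chosen spanning tree.

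First I would establish the ``if'' direction. Suppose $X_*$ is finite. Since the core is obtained by removing all hanging branches (which are trees and hence contain no cycles), every edge outside the spanning tree $T$ must lie in $X_*$ --- removing a tree cannot destroy or create a cycle. Hence $\Edges(X)\setminus\Edges(T)$ is contained (up to the edges of $T$ inside $X_*$) in the finite graph $X_*$, so there are only finitely many such edges, giving $H$ finitely generated.

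For the converse I would argue contrapositively: if $X_*$ is infinite, I produce infinitely many independent generators. Here the key point is that $X_*$ is a connected $|\bA|$-regular-minus-branches graph in which, by the definition of the core, \emph{every} vertex lies on a reduced loop (equivalently, is not in any hanging branch). The main structural fact to exploit is that an infinite connected graph in which every vertex lies on some cycle must have infinitely many independent cycles: a spanning tree $T$ restricted to $X_*$ cannot exhaust all edges, and because the core has no pendant vertices (every deficient valency was filled by a hanging branch that was removed, so within $X_*$ each vertex still has at least two incident core-edges), the first Betti number of $X_*$ is infinite. Each such extra edge yields a distinct Nielsen--Schreier generator $s_\E$, so $H$ is infinitely generated.

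The step I expect to be the main obstacle is the converse direction, specifically verifying that an infinite core forces an infinite first Betti number rather than merely an infinite but tree-like graph. One must rule out the possibility that $X_*$ is infinite while having only finitely many cycles (which would occur if $X_*$ contained an infinite ray). The resolution is precisely the defining property that \emph{no} vertex of $X_*$ belongs to a hanging branch: an infinite ray with only finitely many cycles behind it would, beyond the last cycle, consist of vertices of degree $\le 2$ forming a branch-like tail, contradicting that such vertices were removed when passing to the core. Making this dichotomy --- finitely many cycles forces a tree tail, which the core construction forbids --- rigorous is the heart of the argument; once it is in place, the bijection of \thmref{th:span} converts the count of independent cycles directly into the rank of $H$.
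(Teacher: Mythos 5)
Your ``if'' direction is essentially the paper's: both reduce to the observation that every edge of $X$ outside the spanning tree $T$ lies in the core (a spanning tree must contain each hanging branch entirely, since the interior of a branch is reachable only through its stem), so a finite core leaves only finitely many non-tree edges and hence, by \thmref{th:span}, finitely many free generators. For the converse the paper argues directly: if $\S$ is finite, the core is contained in the union of the finitely many generator cycles $\vs_{\E_s}$, hence is finite. You instead take the contrapositive --- an infinite core forces infinitely many independent cycles, hence infinitely many generators --- which is a workable route, but it obliges you to prove a structural lemma that the direct argument sidesteps, and that lemma is where your write-up is not yet sound.

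Concretely, the implication ``within $X_*$ each vertex has at least two incident core-edges, hence the first Betti number of $X_*$ is infinite'' fails: a bi-infinite geodesic has minimum degree $2$ and Betti number $0$. Likewise, a tail of $X_*$ consisting of ``vertices of degree $\le 2$'' does not by itself contradict the definition of the core, because what is deleted in forming $X_*$ are interior vertices of hanging branches of $X$ --- vertices of full degree $2m$ in $X$ --- not vertices of small degree in $X_*$. The hypothesis you actually need is that every vertex of $X_*$ lies on a reduced cycle of $X$; this is true but requires an argument from the paper's definition (a reduced cycle cannot enter a hanging branch, and conversely each component of the complement of the union of all reduced cycles, being a tree attached by a single edge all of whose non-root vertices have full degree, is a hanging branch). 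Granting that, the correct finish is: if $X_*$ had finite Betti number, every reduced cycle would be confined to a finite subgraph (the non-tree edges together with the tree geodesics joining their endpoints), and each infinite tree component of $X_*$ beyond it, once the hanging branches of $X$ rooted on its vertices are reattached, would itself be a hanging branch of $X$ --- contradicting that its vertices survived into $X_*$. Your sketch gestures at exactly this, but as written the contradiction you invoke (low degree) is not the one the definition of the core provides.
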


\begin{proof}
There is a natural one-to-one correspondence between spanning trees in the
Schreier graph $X$ and in the core $X_*$. Indeed, the restriction of any
spanning tree in $X$ to $X_*$ is a spanning tree in $X_*$. Conversely, any
spanning tree in $X_*$ uniquely extends to a spanning tree in $X$ by attaching
to it all the maximal hanging branches. Now, if the core is finite, then any
spanning tree in it ($\equiv$ the associated spanning tree in $X$) is obtained
by removing finitely many edges, so that the number of generators of $H$ is
finite (see \thmref{th:span}). Conversely, if $H$ is finitely generated, then
the core is contained in the finite union of the cycles in $X$ corresponding
to the generators of $H$.
\end{proof}

The definition of the core directly implies

\begin{lem} \label{lem:dec1}
All the paths $\f\in\Paths_o(X)\cup\Paths_o^\infty(X)$ (i.e., both finite and
infinite paths without backtracking issued from $o$) can be uniquely split
into the following three consecutive parts (some of which may be missing), see
\figref{fig:core}:
\begin{itemize}
\item[(i)]
the geodesic segment joining the origin $o$ with the root $o'\in X_*$ of the
maximal hanging branch which contains $o$ (if $o\notin X_*$ and $\f$ passes
through $X_*$);
\item[(ii)]
the part of $\f$ (possibly infinite) which is contained in $X_*$;
\item[(iii)]
the part of $\f$ (possibly infinite) entirely contained inside a certain
hanging branch.
\end{itemize}
\end{lem}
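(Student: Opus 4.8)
The plan is to reduce everything to the single structural fact, recorded just above the statement, that $X$ is obtained from its core $X_*$ by attaching maximal hanging branches, each of which is a tree meeting the rest of $X$ in exactly one vertex (its root) through exactly one edge (its stem). First I would establish the key \emph{trapping} property: once a path without backtracking crosses a stem from the root into the interior of a hanging branch $B$, it can never return to $X_*$. Indeed, $B$ is a tree and meets $X\setminus B$ only at its root, while inside a tree any path without backtracking is a geodesic and hence visits each vertex at most once; to re-enter $X_*$ the path would have to come back to the root and retrace the stem it just traversed, which backtracking forbids. Thus a non-backtracking path can leave the core at most once, and after doing so it stays inside a single hanging branch forever.

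Next I would split according to whether $o\in X_*$. If $o\in X_*$, part (i) is absent. I would observe that every edge joining two core vertices is itself a core edge, since a stem joins a core vertex to an interior (non-core) vertex and an interior edge joins two interior vertices; hence while its successive vertices lie in $X_*$ the path uses core edges and stays in the core. So $\f$ remains in $X_*$ until the unique instant — if any — at which it crosses a stem into a hanging branch: the initial core segment is part (ii), and by the trapping property the remainder is confined to that branch, giving part (iii). If instead $o\notin X_*$, then $o$ lies in the interior of a unique maximal hanging branch $B_0$ with root $o'\in X_*$. Since $B_0$ meets $X$ only at $o'$, the path can reach $X_*$ only through $o'$; should it do so, its initial portion from $o$ to $o'$ is the unique reduced path between these two vertices in the tree $B_0$, which is the geodesic $[o,o']$ — and this is geodesic in $X$ as well, because any $o$--$o'$ path must re-enter $B_0$ through $o'$. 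This segment is part (i), and from $o'$ onward we are reduced to the previous case, producing parts (ii) and (iii).

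It remains to dispose of the cases where the relevant parts are missing, and to argue uniqueness. If $o\in X_*$ and $\f$ never leaves the core, it is entirely part (ii); if $o\notin X_*$ and $\f$ never reaches $o'$, then it cannot exit $B_0$ at all, so the whole path lies inside the hanging branch $B_0$ and is entirely part (iii), with (i) and (ii) absent. Finally I would note that the splitting is \emph{forced}: the (absolute) core and its maximal hanging branches are canonically defined, the moment $\f$ crosses from the core into a branch interior is unique by the trapping property, and the moment it first meets $o'$ (when $o\notin X_*$) is unique as well; hence the three consecutive parts are uniquely determined. The only genuinely non-formal point is the trapping property of the first paragraph, which is precisely where the tree structure of the hanging branches and their single-vertex attachment to the core are used; the rest is bookkeeping of the transition instants.
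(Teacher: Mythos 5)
Your proof is correct, and it is essentially the argument the paper has in mind: the paper offers no proof at all, merely asserting that the lemma follows directly from the definition of the core, and your write-up (the trapping property of hanging branches via non-backtracking walks in a tree being simple, plus the case split on whether $o\in X_*$) is exactly the direct verification being left implicit. Nothing is missing.
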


\begin{figure}[h]
\begin{center}
     \psfrag{o}[rb][rb]{$o$}
     \psfrag{O}[rb][rb]{$o'$}
     \psfrag{X}[l][l]{$X_*$}
          \includegraphics[scale=.6]{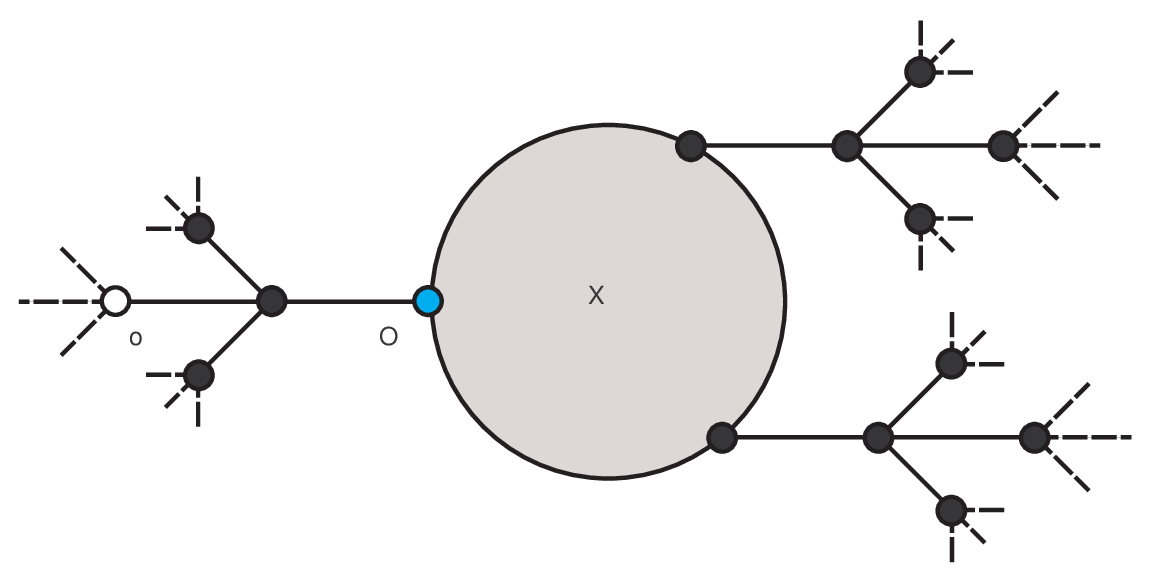}
          \end{center}
          \caption{}
          \label{fig:core}
\end{figure}

Below we shall also use the following

\begin{lem} \label{lem:smaller}
Let $\S$ be the system of generators of a subgroup $H\le F$ determined by a
spanning tree $T$ in the Schreier graph $X$. Then for any $s\in\S$ the
Schreier graph $X'$ of the group $H'=\la \S\setminus\{s\}\ra$ is obtained by
deleting from $X$ the edge $\E_s$ and attaching a hanging branch to each of
its endpoints.
\end{lem}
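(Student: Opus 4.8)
The plan is to understand exactly what happens to the Schreier graph when we pass from $H$ to its subgroup $H'=\la\S\setminus\{s\}\ra$. Recall from \thmref{th:span} that the generators of $H$ correspond bijectively to the oriented edges of $X$ outside the spanning tree $T$, with the generator $s$ corresponding to an edge $\E_s=[x,y]$. The cosets of $H'$ in $F$ refine the cosets of $H$, so there is a natural covering map $X'=H'\bs F \to X=H\bs F$. The key observation is that removing $s$ from the generating set is precisely the operation of \emph{not} closing up the cycle $\vs_{\E_s}$: in the universal unfolding of $X$ along $T$, the generator $s$ is what identifies the two ``copies'' of $o$ reached by the two paths $[o,x]_T$ and $[o,y]_T$ followed across $\E_s$. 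Dropping $s$ breaks that single identification while keeping all the others intact.

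Concretely, I would argue as follows. First I would set up the spanning tree $T$ for $X$ and note that by \propref{pr:finite}-type reasoning (really by the correspondence in \thmref{th:span}) the free generating system of $H$ is encoded by $\Edges(X)\setminus\Edges(T)$. The subgroup $H'$ is obtained by declaring the single cycle $\vs_{\E_s}$ to be no longer a relation that we allow to close, i.e., we cut the edge $\E_s$. Cutting $\E_s=[x,y]$ from $X$ disconnects the cycle but leaves the vertices $x$ and $y$ each with one deficient valency (the half-edge formerly going into $\E_s$). To restore the Schreier graph property---that $X'$ be $|\bA|$-regular with the correct labels around each vertex (as required by the characterization of Schreier graphs in \secref{sec:span})---each of these deficient valencies must be filled. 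Since the label $\s_0(s)\in\bA$ on $\E_s$ must still appear exactly once at $x$ and its inverse exactly once at $y$, and since no new relations are imposed, the only way to fill these valencies consistently is to attach the full regular tree along the missing label directions. I would verify that the resulting attached subtrees are hanging branches in the sense of \secref{sec:hang}: they are subgraphs isomorphic (with labelling) to branches in the Cayley graph of $F$, because they carry no nontrivial loops (no new generators have been created) and hence lift isomorphically to the tree.

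For the rigorous version I would use the covering-space / Stallings-folding picture. The graph $\wt X$ obtained from $X$ by deleting the open edge $\E_s$ and attaching, at $x$ and at $y$, the two maximal freely-unfolded branches in the missing label directions is $|\bA|$-regular with the Schreier labelling, connected, and has a fundamental group (based at $o$) freely generated exactly by the cycles $\vs_{\E_{s'}}$ for $s'\in\S\setminus\{s\}$. By the converse direction in \secref{sec:span} (every such labelled regular graph is the Schreier graph of a subgroup, namely the image of its fundamental group in $F$), $\wt X$ is the Schreier graph of the subgroup generated by those cycles, which is precisely $H'$. Hence $X'\cong\wt X$, which is the assertion.

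The main obstacle I anticipate is not the combinatorial picture, which is intuitively clear, but making the identification $X'\cong\wt X$ airtight: one must check that deleting $\E_s$ and attaching branches does not accidentally create or destroy any \emph{other} reduced loop at $o$, so that the fundamental group is exactly $\la\S\setminus\{s\}\ra$ and not something larger or smaller. The subtle point is that paths in $X$ which used $\E_s$ many times get ``unrolled'' in $X'$; I would handle this by lifting paths through the covering $X'\to X$ and checking, using \lemref{lem:dec}, that a reduced loop in $X'$ projects to a reduced word in $\S\setminus\{s\}$ and conversely, with the new hanging branches contributing no loops because branches in a tree are simply connected. Once this correspondence of reduced loops is established, the regularity and labelling conditions pin down $\wt X$ uniquely, and the lemma follows.
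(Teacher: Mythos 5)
Your proposal is correct and follows essentially the same route as the paper: construct the candidate graph by deleting $\E_s$ and attaching labelled hanging branches, observe that the resulting labelled $|\bA|$-regular graph is the Schreier graph of some subgroup, and identify that subgroup as $\la\S\setminus\{s\}\ra=H'$ via the spanning tree $T'=T\cup(\text{new branches})$ and the correspondence of \thmref{th:span} between non-tree edges and free generators. The covering-map framing and the extra verification of reduced loops that you anticipate as an obstacle are not needed once \thmref{th:span} is invoked, which is exactly how the paper keeps its proof to a few lines.
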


\begin{proof}
Being a Schreier graph, the edges of $X$ are labelled with letters from $\bA$.
This labelling extends to a labelling of $X'$ which makes of it the Schreier
graph of a certain subgroup of $F$. We can choose a spanning tree $T'$ in $X'$
by taking the union of $T$ and of the hanging branches added during the
construction of $X'$. The tree $T'$ determines then a set of generators $\S'$,
which, since the labellings of $X$ and $X'$ agree, coincides with
$\S\setminus\{s\}$.
\end{proof}

\subsection{The full limit set} \label{sec:limit-setsa}

\begin{defn} \label{def:full}
The \emph{limit set} $\La=\La_H\subset\pt F$ of a subgroup $H\leq F$ is the
set of all the limit points of $H$ with respect to the compactification $\wh
F=F\cup\pt F$ described in \secref{sec:bdrymap} (below we shall sometimes call
this limit set \emph{full} in order to distinguish it from other limit sets).
\end{defn}

The limit set $\La$ is closed and $H$-invariant. The following description is
actually valid for an arbitrary discrete group of isometries of a Gromov
hyperbolic space, see \cite{Gromov87,Bourdon95}:

\begin{thm} \label{th:minimal}
The action of $H$ on $\La$ is \emph{minimal} (there are no proper
$H$-invariant closed subsets), whereas the action of $H$ on the complement
$\pt F\setminus\La$ is \emph{properly discontinuous} (no orbit has
accumulation points).
\end{thm}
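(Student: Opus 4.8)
The plan is to reduce both assertions to a single ``contraction'' (North--South type) dynamical lemma describing how elements of $F$ that escape to infinity act on $\pt F$. Recall that $\La$ is exactly the set of accumulation points in $\wh F$ of the orbit $H=H\cdot e$, and that the $H$-action on $\pt F$ is by the left-concatenation maps of \secref{sec:bdrymap}. \textbf{Contraction Lemma:} if $h_n\in F$ satisfy $h_n\to\eta\in\pt F$ and $h_n^{-1}\to\xi\in\pt F$ in $\wh F$, then $h_n\o\to\eta$ for every $\o\in\pt F$ with $\o\neq\xi$. In the tree this is transparent: forming the reduced word $\s(h_n\o)$ out of $\s(h_n)$ and $\s(\o)$ produces exactly $(h_n^{-1}|\o)$ cancellations, so $\s(h_n\o)$ agrees with $\s(h_n)$ in its first $|h_n|-(h_n^{-1}|\o)$ letters. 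Since $h_n^{-1}\to\xi$ and $\o\neq\xi$, the Gromov products $(h_n^{-1}|\o)$ stabilize at the finite value $(\xi|\o)$, while $|h_n|\to\infty$; hence $\s(h_n\o)$ and $\s(h_n)$ share prefixes of unbounded length and both converge to $\s(\eta)$.

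\textbf{Minimality.} I fix $\o\in\La$ and an arbitrary target $\eta\in\La$, and show $\eta\in\ov{H\o}$. Choose distinct $h_n\in H$ with $h_n\to\eta$; after passing to a subsequence, compactness of $\wh F$ gives $h_n^{-1}\to\xi\in\pt F$ (the $h_n^{-1}$ are distinct, so they accumulate only on the boundary). If $\o\neq\xi$ the Contraction Lemma yields $h_n\o\to\eta$, so $\eta\in\ov{H\o}$. If instead $\o=\xi$, I first move $\o$ off $\xi$ inside its own orbit: since $H$ is nonelementary its orbit $H\o\subset\La$ is infinite, so there is $h_0\in H$ with $h_0\o\neq\xi$, and applying the lemma to $h_0\o$ gives $h_nh_0\o\to\eta$ with $h_nh_0\o\in H\o$. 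Thus $\La\subseteq\ov{H\o}$; as $\La$ is closed and $H$-invariant the reverse inclusion is automatic, so $\ov{H\o}=\La$. Any nonempty closed $H$-invariant $A\subseteq\La$ then contains some $\o$, and hence contains $\ov{H\o}=\La$, which proves minimality.

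\textbf{Proper discontinuity.} For $\o\in\La^c$ I show that every accumulation point of the orbit $H\o$ lies in $\La$, whence $H\o$ has no accumulation point in $\La^c$. Suppose $h_n\o\to\zeta$ with the $h_n\in H$ distinct; passing to a subsequence, $h_n\to\eta\in\La$ and $h_n^{-1}\to\xi\in\La$ (both limits lie in $\La$ because $h_n,h_n^{-1}\in H$). Since $\xi\in\La$ while $\o\in\La^c$, necessarily $\o\neq\xi$, so the Contraction Lemma gives $h_n\o\to\eta$ and therefore $\zeta=\eta\in\La$. Hence the orbit $H\o$ is a closed discrete subset of $\La^c$, which is precisely proper discontinuity.

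\textbf{Main obstacle.} The only real content is the Contraction Lemma together with the avoidance step $\o=\xi$ in the minimality argument. The lemma is where the geometry enters through the cancellation (confluent) count; in the general Gromov-hyperbolic setting this count is replaced by the standard inequality controlling Gromov products under isometries, valid up to the hyperbolicity constant, and the same two-line dynamical argument then goes through. The avoidance step is also exactly where nonelementarity of $H$ is needed; the elementary cases (e.g.\ cyclic $H$, with $|\La|\le 2$) are degenerate and are checked directly, while proper discontinuity requires no such hypothesis.
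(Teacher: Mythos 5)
Your argument is correct, and it is worth noting at the outset that the paper does not prove \thmref{th:minimal} at all: it is quoted as a known fact, valid for an arbitrary discrete group of isometries of a Gromov hyperbolic space, with references to Gromov and Bourdon. So your proof is a genuinely different route in the sense that it is self-contained and purely combinatorial, in the same spirit as the rest of \secref{sec:Nielsen-method}. Your Contraction Lemma is exactly the North--South dynamics underlying the cited general results, and in the tree it does reduce to the cancellation count you give: the reduced form of $\s(h_n)\s(\o)$ loses precisely $(h_n^{-1}|\o)$ letters, this Gromov product stabilizes at the finite value $(\xi|\o)$ once the prefixes of $\s(h_n^{-1})$ of length $(\xi|\o)+1$ agree with $\s(\xi)$, and the rest (extracting convergent subsequences of $h_n$ and $h_n^{-1}$ in the compact $\wh F$, and noting that $\o\in\pt F\setminus\La$ forces $\o\neq\xi$ since $\xi\in\La$) is the standard deduction. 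What your route buys is independence from the hyperbolic-geometry literature; what the citation buys the authors is the statement in the generality they invoke elsewhere in the paper.

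Two points deserve attention. First, in the case $\o=\xi$ of the minimality argument you need some $h_0\in H$ with $h_0\o\neq\o$; the fact you are implicitly using is that the stabilizer in $F$ of a boundary point is at most cyclic (two nontrivial elements of a free group fixing a common point of $\pt F$ lie in a common maximal cyclic subgroup), so that failure of this step forces $H$ to be cyclic. This is the only non-formal input besides the Contraction Lemma and should be stated. Second, your remark that the cyclic case is ``checked directly'' glosses over the fact that for $H=\la g\ra$ one has $\La=\{g^{+\infty},g^{-\infty}\}$ with both points fixed by $H$, so each singleton is a proper nonempty closed invariant subset and minimality as literally stated in \thmref{th:minimal} fails. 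This is an imprecision in the paper's statement (the usual formulation assumes $H$ non-elementary), not a gap in your argument for non-cyclic $H$, and it is harmless for the way the theorem is used in the proof of \thmref{th:full}, since for cyclic $H$ one has $\O=\La$ directly; but ``checked directly'' should be replaced by an explicit acknowledgement that the non-elementary hypothesis is genuinely needed for minimality.
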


In our concrete situation the limit set $\La$ and a certain natural
fundamental domain in $\pt F\setminus\La$ admit the following very explicit
description (similar to \propref{pr:2}) in terms of the correspondence
$\o\mapsto\pi(\o)$ between $\pt F$ and the set $\Paths_o^\infty(X)$ of
infinite paths without backtracking in $X$ starting from the origin $o$ (see
\propref{pr:1} and the comment before \propref{pr:2}).

\begin{thm} \label{th:full}
\hfill
\begin{itemize}
  \item [(i)]
The full limit set $\La\subset\pt F$ corresponds to the set of paths from
$\Paths_o^\infty(X)$ which eventually stay inside the core $X_*$ (i.e., for
these paths the component (ii) from \lemref{lem:dec1} is infinite).
  \item [(ii)]
The full limit set $\La$ is the closure $\ov\O$ of the Schreier limit set.
  \item [(iii)]
The complement $\pt F\setminus\La$ is a disjoint union of $H$-translates of
the fundamental domain $\Th=\De\cap \left( \pt F \setminus\La \right)$. The
set $\Th$ is open and corresponds to the set of paths from $\Paths_o^\infty$
which do not pass through any of the edges from $\Edges(X)\setminus\Edges(T)$
and eventually stay inside a hanging branch (i.e., for which the component
(iii) from \lemref{lem:dec1} is infinite).
  \item [(iv)]
$\La=\O$ (equivalently, $\De=\Th$) if and only if $H$ is finitely generated.
\end{itemize}
\end{thm}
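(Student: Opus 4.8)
The plan is to prove the four items by systematically using the path decomposition from \lemref{lem:dec1} together with the combinatorial descriptions of $\O$ and $\De$ already established in \propref{pr:2}. Each part of \thmref{th:full} is essentially a translation of a geometric statement about the core $X_*$ and hanging branches into a statement about $\Paths_o^\infty(X)$, so the overarching strategy is to characterize the limit set $\La$ in terms of the long-term behaviour of the corresponding path and then match this against the known descriptions of $\O$ and $\De$.

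For item (i), I would argue directly from the definition of $\La$ as the set of accumulation points of $H$ in $\wh F$. A boundary point $\o$ lies in $\La$ precisely when the geodesic ray $[e,\o)$ stays within bounded distance of infinitely many group elements $h\in H$; translated via $\pi$, this means the path $\pi(\o)$ repeatedly returns to cycles based at $o$ lying in $H$. By \lemref{lem:dec1} every path splits into the three consecutive parts (i)--(iii), and a path eventually leaves $X_*$ and enters a single hanging branch if and only if its component (ii) is finite. Since a path that eventually stays in a hanging branch cannot accumulate on $H$ (the branch is a tree attached at one point, contributing no new returns to $o$-based loops), I would show $\o\in\La$ exactly when component (ii) is infinite, i.e., the path eventually stays inside $X_*$. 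For item (ii), the inclusion $\O\subset\La$ follows since paths in $\O$ pass infinitely often through $\Edges(X)\setminus\Edges(T)$, and all such edges lie in the core $X_*$ (edges outside $T$ cannot lie in hanging branches, which are themselves trees), forcing component (ii) to be infinite; then $\ov\O\subset\La$ since $\La$ is closed. For the reverse, I would approximate any path eventually staying in $X_*$ by paths that pass through non-tree edges infinitely often, establishing $\La\subset\ov\O$.

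For item (iii), the set $\Th=\De\cap(\pt F\setminus\La)$ inherits openness from $\pt F\setminus\La$ being open (as $\La$ is closed) intersected with $\De$ being open in $\La^c$; its path description combines \propref{pr:2} (paths in $\De$ never use edges from $\Edges(X)\setminus\Edges(T)$) with the characterization from (i) (paths not in $\La$ eventually stay in a hanging branch, so component (iii) is infinite). That $\pt F\setminus\La$ is a disjoint union of $H$-translates of $\Th$ should follow from \thmref{th:1}, which already decomposes $\pt F$ into $\bigsqcup_h h\De$ and $\O$: intersecting with $\La^c$ and using $\O\subset\La$ removes the $\O$ piece, and the translates $h\Th$ must be disjoint because the $h\De$ are. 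Finally, item (iv) follows by combining (ii) with \propref{pr:finite}: when $H$ is finitely generated the core $X_*$ is finite, so there are only finitely many maximal hanging branches, $\O$ is closed (compact), hence $\La=\ov\O=\O$; conversely, if $H$ is infinitely generated the core is infinite and one exhibits a path that eventually stays in $X_*$ without passing through non-tree edges infinitely often, giving a point in $\La\setminus\O$.

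The main obstacle I expect is item (i), specifically the careful verification that a boundary point $\o$ accumulates on $H$ if and only if its path $\pi(\o)$ has infinite core component. The subtle direction is showing that staying eventually in $X_*$ forces accumulation on $H$: one must produce infinitely many $h\in H$ whose geodesics fellow-travel $[e,\o)$, which requires exploiting that $X_*$ is the union of cycles corresponding to generators of $H$ and that a path wandering in $X_*$ perpetually encounters such cycles. The converse — that entering a hanging branch precludes accumulation — relies on the fact that a hanging branch, being a tree isomorphic to a branch in the Cayley graph, contains no nontrivial element of $H$ and thus acts as a genuine ``dissipative'' appendage; making this precise via \lemref{lem:dec1} is where the geometric content of the theorem resides.
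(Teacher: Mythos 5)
There is a genuine gap, and it sits exactly where you locate the ``geometric content'' of the theorem, namely in part (i). You characterize $\La$ by saying that $\o\in\La$ ``precisely when the geodesic ray $[e,\o)$ stays within bounded distance of infinitely many group elements $h\in H$'' and later speak of producing elements of $H$ that \emph{fellow-travel} $[e,\o)$. That is the definition of the \emph{radial} limit set $\La^{rad}$, not of $\La$: for $h_n\to\o$ in $\wh F$ one only needs the Gromov products $(h_n|\o)$ to tend to infinity (the geodesics $[e,h_n]$ share longer and longer prefixes with $[e,\o)$), with no control on $|h_n|-(h_n|\o)$, i.e.\ on the distance from $h_n$ to the ray. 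By \propref{pr:LaRad} and \propref{pr:LaRad=La} the set $\La^{rad}$ corresponds to paths which eventually stay in the core \emph{and do not go to infinity in $X$}, and it is strictly smaller than $\La$ whenever $H$ is infinitely generated; so carrying out your plan faithfully would prove the wrong statement in exactly the case where (i) has content. The paper's argument for (i) avoids this by working with cycles directly: by \lemref{lem:dec1} every cycle $\pi(h)$ is (geodesic into $X_*$) + (part in $X_*$) + (geodesic back), so a pointwise limit of cycles has infinite core component; conversely, if $\f(n)\in X_*$ then the truncation $[\f]_n$ extends to a non-backtracking cycle (otherwise $\f(n)$ would be interior to a hanging branch), so $\f$ is a pointwise limit of cycles.

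A secondary gap is the reverse inclusion $\La\subset\ov\O$ in (ii), which you only assert (``I would approximate\dots''). Making the approximation precise requires showing that from any core vertex one can continue without backtracking so as to traverse edges of $\Edges(X)\setminus\Edges(T)$ infinitely often, which is not immediate. The paper sidesteps this entirely: $\ov\O$ is a non-empty closed $H$-invariant subset of $\La$, so $\ov\O=\La$ by minimality of the $H$-action on $\La$ (\thmref{th:minimal}). Similarly, in (iii) your openness claim for $\Th$ via ``$\De$ being open in $\La^c$'' is unsubstantiated (and essentially restates what is to be proved); the paper instead observes that once a path in $\Th$ is deep inside a hanging branch, the whole cylinder $C_{[\o]_n}$ lies in $\Th$ because hanging branches contain no edges outside $T$. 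Your treatment of (iv) and of the decomposition of $\La^c$ into translates of $\Th$ matches the paper and is fine.
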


\begin{proof}
(i) Elements of $H$ correspond to cycles in $\Paths_o(X)$. By
\lemref{lem:dec1}, if $o\in X_*$ then any cycle from $\Paths_o(X)$ is entirely
contained in the core, and if $o\notin X_*$ then for any such cycle the
components (i) and (iii) described in \lemref{lem:dec1} are the geodesic
segments $[o,o']$ and $[o',o]$, respectively. Thus, the pointwise limit of any
sequence of such cycles (as their lengths go to infinity) is a path from
$\Paths_o^\infty(X)$ with infinite component~(ii). Conversely, if the $n$-th
point $\f(n)$ of a path $\f\in\Paths_o^\infty(X)$ belongs to $X_*$, then the
corresponding truncation $[\f]_n$ can be extended to a cycle without
backtracking (as otherwise $\f(n)$ must be inside a hanging branch), so that
$\f$ is a pointwise limit of cycles from $\Paths_o(X)$.

(ii) As it follows from \thmref{th:bdrymap}, $\O\subset\La$, it is non-empty
and $H$-invariant. Thus, in view of the minimality of $\La$
(\thmref{th:minimal}), $\ov\O=\La$.

(iii) Since $\Th\subset\De$, the fact that its $H$-translates are pairwise
disjoint and that their union is the complement of $\La$ follows at once from
\thmref{th:1}. The description of $\Th$ in terms of the associated subset of
$\Paths_o^\infty(X)$ is a combination of the description of the complement of
$\La$, which is (i) above, and of the description of the Schreier fundamental
domain $\De$ (\propref{pr:2}). Finally, $\Th$ is open because hanging branches
contain no edges from $\Edges(X)\setminus\Edges(T)$, so that if a path
$\f\in\Paths_o^\infty$ corresponds to a point $\o\in\Th$, then the whole open
cylinder $C_{[\o]_n}$ is also contained in $\Th$ for a sufficiently large~$n$.

(iv) We shall prove this claim in terms of the descriptions of the sets $\De$
and $\Th$ from \propref{pr:2} and from (iii) above, respectively. Therefore,
in view of \propref{pr:finite} we have to show that the core $X_*$ is finite
if and only if all the paths from $\Paths_o^\infty(X)$ confined to the
spanning tree $T$ eventually hit a certain hanging branch. Indeed, if $X_*$ is
finite, then the restriction of the spanning tree $T$ to $X_*$ is also finite,
so that any path as above must eventually leave $X_*$ and enter a hanging
branch. Conversely, if $X_*$ is infinite, then the restriction of the spanning
tree $T$ to $X_*$ is also infinite, so that there is an infinite path without
backtracking obtained by first joining $o$ with $X_*$ (if $o\notin X_*$) and
then staying inside the restriction of $T$ to $X_*$.
\end{proof}

\begin{rem}
Although the Schreier fundamental domain $\De$ is closed (\propref{pr:meas})
and contains the fundamental domain $\Th$, it is \emph{not} necessarily the
closure of $\Th$. For instance, it may happen that $\Th$ is empty (i.e.,
$\La=\pt F$), although $\De$ is not (and even has positive measure, see
\remref{rem:conv6}).
\end{rem}

\begin{cor} \label{cor:limit}
$\La=\pt F$ if and only if the Schreier graph $X$ has no hanging branches
(i.e., $X_*=X$).
\end{cor}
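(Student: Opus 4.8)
The plan is to derive Corollary~\ref{cor:limit} directly from the description of the full limit set $\La$ provided by \thmref{th:full}, so that essentially no new argument is needed. The statement $\La=\pt F$ is to be characterized by the geometric condition $X_*=X$, i.e.\ that the Schreier graph has no hanging branches. I would phrase the whole proof as a clean equivalence built on part~(i) of \thmref{th:full}, which identifies $\La$ (under the correspondence $\o\mapsto\pi(\o)$ between $\pt F$ and $\Paths_o^\infty(X)$) with those infinite non-backtracking paths that eventually stay inside the core~$X_*$.

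First I would establish the easy implication: if $X_*=X$, then there are no hanging branches at all, so by \lemref{lem:dec1} component~(iii) of every path $\f\in\Paths_o^\infty(X)$ is necessarily absent, and component~(ii) is automatically infinite for every infinite path. Hence \emph{every} path eventually stays inside $X_*=X$, and part~(i) of \thmref{th:full} gives $\La=\pt F$. For the converse I would argue contrapositively: suppose $X$ has a hanging branch, i.e.\ $X_*\neq X$. Then there is a maximal hanging branch, and one can produce an infinite non-backtracking path that enters its interior and stays there forever (following \lemref{lem:dec1}, such a path has infinite component~(iii) and \emph{finite} component~(ii)). By part~(i) of \thmref{th:full} the boundary point corresponding to this path does not lie in $\La$, so $\La\neq\pt F$. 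Combining the two directions yields the stated equivalence.

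The main obstacle, such as it is, is purely bookkeeping: one must make sure that a genuinely infinite non-backtracking ray can be threaded into a hanging branch, which is immediate from the definition of a branch (its interior vertices all have full degree $|\bA|$, so a non-backtracking path once inside never gets stuck and can continue indefinitely). Since all the substantive topological and combinatorial content has already been absorbed into \thmref{th:full} and \lemref{lem:dec1}, the corollary follows formally; I would keep the write-up to a couple of sentences, explicitly invoking part~(i) of \thmref{th:full} in both directions rather than reproving anything.
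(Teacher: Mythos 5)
Your proof is correct and follows exactly the route the paper intends: the corollary is stated without proof as an immediate consequence of Theorem~\ref{th:full}, and your derivation from part~(i) of that theorem together with Lemma~\ref{lem:dec1} (plus the observation that a non-backtracking ray can be continued indefinitely inside a hanging branch because its interior vertices have full degree) is precisely the implicit argument.
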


\begin{cor} \label{cor:6}
If $X$ has a hanging branch, then the boundary action of $H$ has a non-trivial
dissipative part.
\end{cor}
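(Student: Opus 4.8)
The plan is to produce a wandering set of positive measure, which by definition renders the dissipative part non-trivial; such a set is handed to us almost directly by \thmref{th:full}(iii), so the corollary is essentially a repackaging of what has already been proved.

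First I would invoke \corref{cor:limit}: the hypothesis that $X$ has a hanging branch is equivalent to $\La\neq\pt F$, so the complement $\pt F\setminus\La$ is non-empty. By \thmref{th:full}(iii) this complement is the disjoint union $\bigsqcup_{h\in H} h\Th$ of $H$-translates of the open fundamental domain $\Th=\De\cap(\pt F\setminus\La)$; in particular $\Th$ itself is non-empty, and its $H$-translates are pairwise disjoint. Thus $\Th$ is a wandering set for the boundary action of $H$.

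It then remains to check that $\m(\Th)>0$. Since $\Th$ is non-empty and open, it contains some cylinder set $C_g$, and by the explicit formula \eqref{eq:cylmeas} every cylinder has strictly positive $\m$-measure; hence $\m(\Th)>0$. Having exhibited a wandering set of positive measure, I conclude that the action cannot be conservative on a full-measure set, so the dissipative part of the boundary action is non-trivial.

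Alternatively, the same conclusion follows from \thmref{th:2}: there the conservative part $\C$ is identified (mod 0) with $\O$, so $\D=\pt F\setminus\O$ (mod 0). Since $\O\subseteq\La=\ov\O$ by \thmref{th:full}(ii), the open set $\pt F\setminus\La$ — which is non-empty by \corref{cor:limit} and hence of positive measure by the same cylinder argument — is contained in the dissipative part $\pt F\setminus\O$. Either route makes clear that the only point requiring any care is the positivity of $\m$ on non-empty open subsets of $\pt F$, which is immediate from \eqref{eq:cylmeas}; there is no substantial obstacle here, as the corollary is a formal consequence of the preceding structural results.
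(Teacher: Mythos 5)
Your proof is correct and follows essentially the same route as the paper: the paper also deduces $\La\neq\pt F$ from \corref{cor:limit}, notes that $\Th$ is then a non-empty open wandering set by \thmref{th:full}(iii), and concludes $\m(\Th)>0$ from the fact that $\m$ has full support (which you justify, correctly, via the cylinder-set formula \eqref{eq:cylmeas}).
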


\begin{proof}
If $X$ has a hanging branch, then by \corref{cor:limit} $\La\neq\pt F$, i.e.,
the fundamental domain $\Th$ is non-empty. Since $\Th$ is open, and the
measure $\m$ has full support, $\m\Th>0$, so that $\Th$ is a non-trivial
wandering set.
\end{proof}

\begin{rem} \label{rem:conv6}
The converse of \corref{cor:6} is not true, see \exref{ex:mingrowthdiss}.
\end{rem}

\subsection{Radial limit set}

Specializing the type of convergence in the definition of the full limit set
(\defref{def:full}) one obtains subsets of $\La$ with various geometric
properties.

\begin{defn}
The \emph{radial limit set} $\La^{rad}\subset\pt F$ is the set of all the
accumulation points of the sequences of elements of $H$ which stay inside a
tubular neighbourhood of a certain geodesic ray in $F$.
\end{defn}

This definition in combination with \thmref{th:full}(i) implies

\begin{prop} \label{pr:LaRad}
The radial limit set $\La^{rad}\subset\pt F$ corresponds to the set of paths
$\f\in\Paths_o^\infty(X)$ which eventually stay inside the core $X_*$ and do
not go to infinity (i.e., $\liminf_n d_X(o,\f(n))<\infty$).
\end{prop}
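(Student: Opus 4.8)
Proposition \ref{pr:LaRad} characterizes the radial limit set $\La^{rad}$ as those paths that stay in the core $X_*$ eventually but do not escape to infinity in $X$.

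Let me sketch a proof.

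The plan is to unwind both definitions into statements about the associated paths $\pi(\o) \in \Paths_o^\infty(X)$ and match them. By Definition of $\La^{rad}$, a point $\o \in \pt F$ lies in $\La^{rad}$ iff there is a sequence $h_k \in H$ converging to $\o$ with all $h_k$ within a bounded distance (say $\le R$) of a single geodesic ray $\r = [e,\zeta)$ in the Cayley graph $\G(F,\bA)$. Since $\La^{rad} \subset \La$, Theorem \ref{th:full}(i) already tells me that $\pi(\o)$ eventually stays inside the core $X_*$, so the real content is the equivalence between the boundedness condition in $F$ and the condition $\liminf_n d_X(o,\pi(\o)(n)) < \infty$.

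The key observation is that the quotient map $F \to X = H\bs F$ is $1$-Lipschitz and sends the geodesic ray $[e,\o)$ in $F$ to the path $\pi(\o)$ in $X$; more precisely $d_X(o,\pi(\o)(n)) = d_X(o, H[\o]_n) \le |[\o]_n|_F = n$, and this distance equals the distance in $X$ from $o$ to the $n$-th vertex of the path. First I would argue the forward implication. Suppose $\o \in \La^{rad}$, witnessed by $h_k \to \o$ staying within distance $R$ of $\r$. Since the geodesics $[e,h_k)$ fellow-travel $[e,\o)$ on longer and longer initial segments (this is the standard hyperbolicity/tree fact that $h_k \to \o$ forces $(h_k | \o) \to \infty$), the projection to $X$ shows that infinitely many vertices $\pi(\o)(n)$ lie within a uniformly bounded $X$-distance of the images of the $h_k$. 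But each $h_k \in H$ projects to the origin $o \in X$, and staying within distance $R$ of $\r$ in $F$ projects to staying within distance $R$ of the projected ray in $X$; combining these yields a bounded subsequence of $d_X(o, \pi(\o)(n))$, hence $\liminf_n d_X(o,\pi(\o)(n)) < \infty$. Conversely, if $\liminf_n d_X(o,\pi(\o)(n)) < \infty$, I would choose a subsequence $n_k$ with $d_X(o,\pi(\o)(n_k)) \le R$ for all $k$; each such vertex $\pi(\o)(n_k) = H[\o]_{n_k}$ can be joined to $o$ in $X$ by a path of length $\le R$, which lifts to an element $g_k \in F$ with $|g_k| \le R$ and $[\o]_{n_k} g_k \in H$. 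Setting $h_k = [\o]_{n_k} g_k$, these lie in $H$, converge to $\o$ (since $|g_k|$ is bounded while $|[\o]_{n_k}| = n_k \to \infty$), and stay within distance $R$ of the ray $[e,\o)$, witnessing $\o \in \La^{rad}$ with $\r = [e,\o)$.

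The main obstacle will be handling the two distinct geodesic rays in the definition carefully: the definition of $\La^{rad}$ allows the approximating sequence to fellow-travel an \emph{arbitrary} geodesic ray $\r$ in $F$, whereas the natural ray to work with is $[e,\o)$ itself. I expect to need the hyperbolicity (in fact, tree-ness) of $\G(F,\bA)$ to show that any ray $\r$ tracked by a sequence $h_k \to \o$ must itself converge to $\o$, so that without loss of generality $\r = [e,\o)$; in a tree this is immediate since two geodesic rays at bounded Hausdorff distance share the same endpoint. Once that reduction is in place, the equivalence becomes the clean projection/lifting argument above, and the only remaining care is bookkeeping the uniform bound $R$ and confirming (via Theorem \ref{th:full}(i)) that the "eventually in $X_*$" clause is automatic for every point of $\La \supset \La^{rad}$.
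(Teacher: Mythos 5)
Your proof is correct and follows essentially the same route as the paper, which simply asserts the proposition as a consequence of the definition of $\La^{rad}$ together with \thmref{th:full}(i), leaving exactly this unwinding to the reader. The reduction to $\r=[e,\o)$ via tree-ness, the $1$-Lipschitz projection $F\to X$ for the forward direction, and the lifting of a short path from $\pi(\o)(n_k)$ back to $o$ to produce $h_k=[\o]_{n_k}g_k\in H$ for the converse are precisely the intended details.
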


\begin{prop} \label{pr:LaRad=La}
The radial limit set $\La^{rad}$ coincides with the full limit set $\La$ if
and only if the group $H$ is finitely generated.
\end{prop}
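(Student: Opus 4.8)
The plan is to translate the statement into the path language of \propref{pr:LaRad} and \thmref{th:full}(i), and then to use \propref{pr:finite} to replace the finite-generation hypothesis by finiteness of the core $X_*$. Comparing the two path descriptions, one sees immediately that $\La^{rad}\subseteq\La$ always holds: radial points correspond precisely to those paths $\f\in\Paths_o^\infty(X)$ eventually confined to $X_*$ that in addition satisfy $\liminf_n d_X(o,\f(n))<\infty$, while $\La$ corresponds to \emph{all} paths eventually confined to $X_*$. Thus the entire content of the proposition is the reverse inclusion $\La\subseteq\La^{rad}$, and by \propref{pr:finite} it suffices to prove that this inclusion holds if and only if $X_*$ is finite. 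In path terms: every $\f\in\Paths_o^\infty(X)$ whose tail lies in $X_*$ has $\liminf_n d_X(o,\f(n))<\infty$ precisely when $X_*$ is finite.

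The easy direction is $X_*$ finite $\Rightarrow\La\subseteq\La^{rad}$. If $X_*$ is finite, then, being connected, it has bounded diameter, so $d_X(o,\cdot)$ is bounded on $X_*$. Hence any path whose tail lies in $X_*$ automatically satisfies $\liminf_n d_X(o,\f(n))<\infty$, and every point of $\La$ already lies in $\La^{rad}$.

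For the converse I would argue by contraposition, assuming $X_*$ infinite and exhibiting an explicit point of $\La\setminus\La^{rad}$, i.e.\ a path confined (eventually) to $X_*$ that leaves every ball, so that $\liminf_n d_X(o,\f(n))=\infty$. Here I use the geodesic spanning tree $T$ fixed throughout: as observed in the proof of \propref{pr:finite}, its restriction $T_*$ to $X_*$ is a spanning tree of $X_*$, and since $X_*$ is infinite, $T_*$ is an infinite tree. Because $X$ is $|\bA|$-regular, $T_*$ is locally finite, so by K\"onig's lemma it contains an infinite ray. Taking this ray (based at $o$, or, when $o\notin X_*$, based at the root $o'$ of the maximal hanging branch containing $o$) produces an infinite path without backtracking whose tail lies in $X_*$. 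Since the entire path lies in the geodesic tree $T$, one has $d_X(o,\f(n))=d_T(o,\f(n))=n\to\infty$, so the corresponding boundary point belongs to $\La$ but not to $\La^{rad}$.

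The main obstacle is the geometric input in the converse, namely producing a genuinely escaping ray inside the core and certifying that it is geodesic in $X$ rather than merely in $X_*$. The first is supplied by K\"onig's lemma applied to the infinite locally finite tree $T_*$; the second is exactly the reason for working with a \emph{geodesic} spanning tree, as this guarantees $d_T=d_X$ along the ray and hence the escape $d_X(o,\f(n))\to\infty$ for free. The only remaining bookkeeping is the case $o\notin X_*$, handled by prepending the geodesic segment from $o$ to the root $o'$ of its maximal hanging branch (the component~(i) of \lemref{lem:dec1}) and then following the ray in $T_*$ based at $o'$.
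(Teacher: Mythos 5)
Your proof is correct and follows the same route as the paper, which simply invokes \propref{pr:finite} and \propref{pr:LaRad} and leaves the rest implicit. The detail you supply for the infinite-core case (an escaping ray inside the restriction of the geodesic spanning tree to $X_*$, prepended by the segment $[o,o']$ when $o\notin X_*$) is exactly the argument the paper spells out in its proof of \thmref{th:full}(iv), so nothing is missing.
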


\begin{proof}
By \propref{pr:finite}, $H$ is finitely generated if and only if the core
$X_*$ is finite, which implies the claim in view of \propref{pr:LaRad}.
\end{proof}

\begin{rem} \label{rem:ss2}
According to a result of Beardon and Maskit \cite{Beardon-Maskit74}, a
Fuchsian group is finitely generated if and only if its limit set is the union
of the radial limit set and the set of parabolic fixed points. In our
situation there are no parabolic points, so that \propref{pr:LaRad=La} is a
complete analogue of this result. In other words, a subgroup of a finitely
generated free group is \emph{geometrically finite} (see \cite{Bowditch93}) if
and only if it is finitely generated. Note that the core can also be defined
as the quotient of the \emph{geodesic convex hull} of the full limit set $\La$
by the action of $H$, so that in our situation geometrical finiteness of $H$
coincides with its \emph{convex cocompactness} (i.e., finiteness of the core).
\end{rem}

\subsection{Horospheric limit sets} \label{sec:horolim}

\begin{defn}
The \emph{horosphere} passing through a point $g\in F$ and centered
at a point $\o\in\pt F$ is the corresponding level set of the
Busemann cocycle $\b_\o$:
$$
\Hor_\o(g) = \{g'\in F: \b_\o(g,g')=0 \} \;.
$$
In the same way, the \emph{horoballs} in $F$ are defined as
$$
\HBall_\o(g) = \{g'\in F: \b_\o(g,g')\le 0 \} \;.
$$
\end{defn}

Restricting converging sequences to horoballs in $F$ provides us with
\emph{horo\-sphe\-ric limit points}. Unfortunately, the situation here is more
complicated than with the radial limit points, and we have to define two
different horospheric limit sets.

\begin{defn}
The \emph{small} (resp., \emph{big}) \emph{horospheric limit set}
$\La^{horS}=\La^{horS}_H$ (resp., $\La^{horB}=\La^{horB}_H$) of a subgroup
$H\leq F$ is the set of all the points $\o\in\pt F$ such that any (resp., a
certain) horoball centered at $\o$ contains infinitely many points from $H$.
\end{defn}

In terms of the Busemann function a point $\o\in\pt F$ belongs to
$\La^{horS}$ (resp., to $\La^{horB}$) if for any (resp., a certain)
$N\in\Z$ there are infinitely many points $h\in H$ with $b_\o(h)\le
N$.

\begin{rem} \label{rem:ss1}
Usually our small horospheric limit set is called just the
\emph{ho\-ro\-sphe\-ric limit set}, and in the context of Fuchsian and
Kleinian groups its definition, along with the definition of the radial limit
set, goes back to Hedlund \cite{Hedlund36}. Following \cite{Matsuzaki02} we
call it \emph{small} in order to better distinguish it from the \emph{big}
one, which, although apparently first explicitly introduced by Tukia
\cite{Tukia97}, essentially appears already in Pommerenke's paper
\cite{Pommerenke76}. See \cite{Starkov95,Dalbo-Starkov00} for a detailed
discussion of various kinds of limit points for Fuchsian groups.
\end{rem}

The horospheric limit sets $\La^{horS},\La^{horB}$ are obviously $H$-invariant
and contained in the full limit set $\La$ (since the only boundary
accumulation point of any horoball is just its center). The following theorem
describes the relationship between the full limit set $\La$, the radial limit
set $\La^{rad}$, the both horospheric limit sets, the Schreier limit set $\O$
and the set~$\Si$ \eqref{eq:Si}.

\begin{thm} \label{th:3}
One has the inclusions
$$
\La^{rad}\subset\La^{horS} \subset \O \subset \La^{horB} \subset \Si
\subset \La \;.
$$
\end{thm}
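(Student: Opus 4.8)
The statement asserts a chain of five inclusions among six sets. My plan is to prove each inclusion separately, handling them in an order that exploits the geometric descriptions already available. The cleanest approach is to work throughout with the path picture: each boundary point $\o \in \pt F$ corresponds to an infinite non-backtracking path $\pi(\o) \in \Paths_o^\infty(X)$, and each $h \in H$ corresponds to a cycle $\pi(h)$. Three of the five inclusions ($\La^{rad} \subset \La^{horS}$, $\La^{horS} \subset \O$, and $\Si \subset \La$) are essentially definitional once the right geometric translation is in place; the remaining two ($\O \subset \La^{horB}$ and $\La^{horB} \subset \Si$) carry the real content and deserve more care.

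\textbf{The easy inclusions.} For $\La^{rad} \subset \La^{horS}$: if $\o$ is a radial limit point, then by definition there is a sequence $h_k \in H$ staying within bounded distance of the ray $[e,\o)$, which forces $b_\o(h_k) \to -\infty$ (a point within distance $R$ of the ray at depth $n$ has Busemann value roughly $-n + O(R)$). Hence any horoball centered at $\o$ catches infinitely many $h_k$, giving $\o \in \La^{horS}$. For $\La^{horS} \subset \O$: I would argue that if any horoball contains infinitely many points of $H$, then in particular $b_\o(h) \le 0$ for infinitely many $h$, which by \eqref{eq:GB} means $(h|\o)$ grows at least like $|h|/2$, so the path $\pi(\o)$ must share arbitrarily long prefixes with cycles in $H$. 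This should force $\pi(\o)$ to pass infinitely often through edges of $\Edges(X)\setminus\Edges(T)$, which is exactly the characterization of $\O$ in \propref{pr:2}. The inclusion $\Si \subset \La$ is immediate, since by \thmref{th:full}(ii) we have $\La = \ov\O$ and $\Si$ will be shown to lie between $\O$ and its closure; more directly, $\o \in \Si$ forces $b_\o(h) \le 0$ for infinitely many $h$ (otherwise the defining series has summable terms bounded by a geometric tail), and such $\o$ is a limit of the corresponding $h$, hence in $\La$.

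\textbf{The two substantial inclusions.} For $\O \subset \La^{horB}$: if $\o \in \O$ then $\o = \s^\infty(\xi)$ for $\xi = s_1 s_2 \dots$, and the truncations $h_n = [\xi]_n$ give infinitely many distinct points of $H$. By \propref{pr:RNOmega} we already have $b_\o(h_n) \le 0$ for all $n$, so all these points lie in the single horoball $\HBall_\o(e)$; this furnishes the ``certain horoball'' required for $\o \in \La^{horB}$. For $\La^{horB} \subset \Si$: suppose a fixed horoball centered at $\o$ contains infinitely many $h \in H$, i.e.\ $b_\o(h) \le N$ for some fixed $N$ and infinitely many $h$. Then $\sum_{h} (2m-1)^{-b_\o(h)} \ge \sum (2m-1)^{-N} = \infty$, so $\o \in \Si$. \textbf{The main obstacle} is the inclusion $\La^{horB} \subset \Si$, where the naive bound just given is \emph{too} naive: having infinitely many $h$ with $b_\o(h) \le N$ gives infinitely many terms each $\ge (2m-1)^{-N} > 0$, so the series trivially diverges — so in fact this is not the obstacle. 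The genuinely delicate point is rather $\La^{horS} \subset \O$, where I must convert the analytic condition (\emph{every} horoball captures infinitely many points) into the combinatorial condition (the path hits non-tree edges infinitely often). The risk is that a point could accumulate on $\o$ along the tree $T$ alone; I would rule this out using the minimality (geodesicity) of $T$ together with inequality \eqref{eq:triangle}, showing that deep returns of $H$-cycles toward $\o$ are impossible without traversing non-tree edges arbitrarily late, which is precisely the defining property of $\O$ via \propref{pr:2}.
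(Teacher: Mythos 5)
Your treatment of $\La^{rad}\subset\La^{horS}$, $\O\subset\La^{horB}$ (via \propref{pr:RNOmega} applied to the truncations $[\xi]_n$) and $\La^{horB}\subset\Si$ is correct and coincides with the paper's. The two remaining inclusions, however, contain genuine gaps. For $\La^{horS}\subset\O$ you reduce the hypothesis to ``$b_\o(h)\le 0$ for infinitely many $h$'' and claim this forces $\pi(\o)$ to cross non-tree edges infinitely often. That reduction discards the essential quantifier (\emph{every} horoball, i.e.\ $b_\o(h)\le N$ for \emph{every} $N$), and the weakened statement is false: sharing arbitrarily long prefixes with cycles of $H$ can happen entirely along the tree-geodesic portions $\s_-(s)$ of those cycles. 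Concretely, if $\o\in\De$ and the tree-path of $\o$ passes through the initial vertices $x_n$ of infinitely many non-tree edges $[x_n,y_n]$ with $|y_n|_T=|x_n|_T-1$, then $|s_n|=2|x_n|_T$ and $(s_n|\o)=|x_n|_T$, so $b_\o(s_n)=0$ for all $n$ while $\o\notin\O$. The paper's argument avoids this: by \thmref{th:1}, $\o\notin\O$ means $\o\in h_0\De$ for some $h_0\in H$, and \propref{pr:RNDelta} together with the cocycle identity yields a \emph{uniform} lower bound $b_\o(h)\ge -C$ over all $h\in H$; hence the horoball at level $-C-1$ contains no point of $H$ at all and $\o\notin\La^{horS}$. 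Your appeal to geodesicity and \eqref{eq:triangle} gestures in this direction but never produces the uniform bound, which is the entire content of the step.

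The inclusion $\Si\subset\La$ is also not ``essentially definitional,'' and both of your justifications fail. The first is circular (it presupposes that $\Si$ lies between $\O$ and $\ov\O$, which is what needs proving). The second asserts that divergence of the series in \eqref{eq:Si} forces $b_\o(h)\le 0$ for infinitely many $h$; this is false, since infinitely many terms equal to a fixed positive quantity $(2m-1)^{-N}$ already make the series diverge. Indeed the paper's own example separating $\La^{horB}$ from $\Si$ exhibits a point $\o\in\Si$ with $b_\o(h)\ge 2d_1+1>0$ for every $h\ne e$. The actual argument requires real input: for $\o\notin\La$ the Gromov products $(h|\o)$ are bounded over $H$, so by \eqref{eq:GB} the series \eqref{eq:Si} converges if and only if the Poincar\'e series $\sum_{h\in H}(2m-1)^{-|h|}$ converges, and the latter does converge because $\La\ne\pt F$ forces a non-trivial dissipative part (\corref{cor:limit} and \corref{cor:6}), whence convergence by the Hopf--Tsuji--Sullivan theorem (\corref{cor:Pconv}). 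Some substitute for this last implication is needed and is absent from your sketch.
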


\begin{proof} \hfill

\medskip

$\La^{rad}\subset\La^{horS}$. Obvious.

\medskip

$\La^{horS} \subset \O$. It follows from \thmref{th:1} and
\propref{pr:RNDelta}.

\medskip

$\O \subset \La^{horB}$. This inclusion was actually already
established in the course of the proof of \thmref{th:2}.

\medskip

$\La^{horB} \subset \Si$. Obvious.

\medskip

$\Si \subset \La$. Clearly, we may assume that $\La\neq\pt F$. If
$\o\notin\La$, then $\max_{h\in H} (h|\o)<\infty$ (for, if
$(h_n|\o)\to\infty$, then $h_n\to\o$). Thus, by formula \eqref{eq:GB} for any
$\o\notin\La$ convergence of the series \eqref{eq:Si} from the definition of
the set $\Si$ is equivalent to convergence of the \emph{Poincar\'e series}
$\sum_{h\in H} (2m-1)^{-|h|}$. Now, if $\La\neq\pt F$, then by
\corref{cor:limit} and \corref{cor:6} the boundary action has a non-trivial
dissipative part, and the Poincar\'e series is convergent by
\corref{cor:Pconv} below.
\end{proof}

We shall show in \secref{sec:exincl} later on that all the inclusions in
\thmref{th:3} are, generally speaking, strict. Nonetheless,

\begin{thm} \label{th:coinc}
The sets $\La^{horS},\O,\La^{horB},\Si$ all coincide
$\m$-\textup{mod 0}.
\end{thm}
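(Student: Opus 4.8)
The plan is to deduce everything from \thmref{th:2} together with the chain of inclusions in \thmref{th:3}. By \thmref{th:2} the Schreier limit set satisfies $\O =_{\m} \C$, where $\C$ is the conservative part, and the proof of \thmref{th:2} identifies $\Si$ with $\C$ as well, so $\O =_{\m} \Si$. Feeding this into the inclusions $\La^{horS}\subset\O\subset\La^{horB}\subset\Si$ from \thmref{th:3}, the sets $\O$, $\La^{horB}$, $\Si$ are squeezed between $\O$ and $\Si$, which coincide mod $0$; hence $\O =_{\m} \La^{horB} =_{\m} \Si$ immediately. The entire content therefore reduces to the single remaining equality $\La^{horS} =_{\m} \O$, i.e.\ (since $\La^{horS}\subset\O$) to showing that the ``horospheric shell''
\[
\O\setminus\La^{horS}
\]
is $\m$-null.

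To analyze the shell I would introduce the function $F(\o)=\inf_{h\in H} b_\o(h)\in\Z\cup\{-\infty\}$. Since $\La^{horS}=\{\o:\inf_h b_\o(h)=-\infty\}$, the shell is exactly $\O\cap A$ with $A=\{\o:F(\o)>-\infty\}$. The cocycle identity $b_\o(g_1g_2)=b_\o(g_1)+b_{g_1^{-1}\o}(g_2)$ gives, for $g\in H$, the transformation rule $F(g^{-1}\o)=F(\o)-b_\o(g)$; thus on $A$ the Busemann cocycle is a measurable coboundary, $b_\o(g)=F(\o)-F(g^{-1}\o)$. Combining this with the Radon--Nikodym formula \eqref{eq:RN=B}, a direct computation shows that $\nu=(2m-1)^{-F}\,\m$ is $H$-\emph{invariant} on $A$. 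In other words, $\O\cap A$ is precisely the part of the conservative set $\C$ on which the boundary action admits an equivalent invariant measure (equivalently, on which the Busemann cocycle is cohomologically trivial); by the splitting this induces it is also the part on which the horospheric $\Z$-extension of \thmref{th:horocycle} decomposes as a direct product with a trivial $\Z$-factor.

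The ``generic'' part of the shell is then easy to dispose of. The set $B=\{\o:F(\o)=0,\ b_\o(h)>0\ \forall h\in H\setminus\{e\}\}$ is wandering: if $g\ne e$ and $\o,g^{-1}\o\in B$ then $F(g^{-1}\o)=-b_\o(g)<0$, a contradiction. More generally the number of minimizers of $b_\o(\cdot)$ over $H$ is an $H$-invariant quantity, and every $\o\in A$ at which this number is \emph{finite} lies in a union of translates of such wandering sets (selecting a canonical minimizer via a fixed ordering of $H$); hence that locus lies in the dissipative part $\D$ and meets $\C$ in a null set. Consequently $\O\cap A$ coincides mod $0$ with the set $B_\infty$ of points whose minimal Busemann level over $H$ is attained \emph{infinitely often}.

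The crux---and the main obstacle---is therefore to show $\m(B_\infty\cap\C)=0$. Here conservativity is of no help by itself: applying recurrence to $\{F=0\}$ merely reproduces the defining property of $B_\infty$ (the orbit returns to level $0$ infinitely often), so nullity must come from genuinely quantitative input. I would pursue the combinatorial route, exploiting minimality of the Schreier system: for $\o\in B_\infty\cap\O$ the estimate in the proof of \propref{pr:RNOmega} forces $b_\o([\xi]_n)=0$ for every truncation, and the equality case $L=D$ there forces every initial segment of the path $\pi(\o)$ to be geodesic in $X$; thus $B_\infty\cap\O$ is contained in the set of $\o$ whose $X$-projection is a geodesic ray, which a direct sphere-counting estimate (comparing geodesic prefixes in $X$ with those in $F$) shows to be $\m$-null on the conservative part. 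Conceptually this is the same phenomenon as the ergodic-theoretic obstruction to the coboundary representation $b_\o(g)=F(\o)-F(g^{-1}\o)$ surviving on a conservative set: along the simple random walk carried on $H$-orbits (\secref{sec:RW}) the Busemann cocycle drifts to $-\infty$ with positive speed, while a coboundary must return to bounded values infinitely often by recurrence of $F$. Either way one obtains $\m(\O\setminus\La^{horS})=0$, which together with the first paragraph yields the asserted mod-$0$ coincidence of $\La^{horS}$, $\O$, $\La^{horB}$ and $\Si$.
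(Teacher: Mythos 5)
Your reduction is sound and runs parallel to the paper's: $\O=\Si$ (mod 0) follows from \thmref{th:2} together with \thmref{thp:3}, the sandwich of \thmref{th:3} then collapses everything onto the single remaining equality $\La^{horS}=\O$ (mod 0) (the paper phrases it as $\m(\La^{horB}\setminus\La^{horS})=0$), and your function $F(\o)=\inf_{h\in H}b_\o(h)$, with its transformation rule $F(g^{-1}\o)=F(\o)-b_\o(g)$, is exactly the measurable $H$-equivariant section of $\wt{\pt F}=\pt F\times\Z\to\pt F$ that the paper constructs on this set (the paper uses the level of the smallest horoball containing infinitely many points of $H$, which obeys the same rule); the wandering-set treatment of finitely many minimizers is also fine. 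The gap is at what you yourself call the crux, and neither of your two suggested ways to close it works. The combinatorial route starts from a false deduction: membership in $B_\infty$ does not force $b_\o([\xi]_n)=0$ for the truncations --- the minimal level $F(\o)$ is merely some integer $\le 0$, and its minimizers need not be truncations. Even after the natural repair (translate each orbit so that $F=0$, which does force $b_\o([\xi]_n)=0$ and, via the equality case in \propref{pr:RNOmega}, that $\pi(\o)$ is a geodesic ray in $X$), you are left with showing that the set of points projecting to geodesic rays in $X$ is $\m$-null on the conservative part; the proposed sphere count does not give this, since the number of geodesic segments of length $n$ in $X$ is controlled by $|S_X^n|$ only up to geodesic multiplicities, which can be exponentially large. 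Worse, by \propref{pr:geod} that geodesic-ray set intersected with $\C$ sits inside $\La^{horB}\setminus\La^{horS}$, so this route reduces the problem to an instance of itself.

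The missing ingredient is \thmref{th:horocycle}, which you mention in passing but never apply, and which is where the genuinely non-quantitative input lives (it encodes that the action is of type $\textup{III}_\l$ on its conservative part). Note that the equivalent $\s$-finite invariant measure $\nu=(2m-1)^{-F}\m$ on $A$ is not by itself a contradiction with conservativity --- conservative infinite-measure-preserving actions abound --- and the heuristic that the Busemann cocycle ``drifts to $-\infty$ with positive speed'' along orbits is neither justified nor needed. The correct one-line finish, which is the paper's, is: on $(A\cap\C)\times\Z$ the coboundary relation $b_\o(g)=F(\o)-F(g^{-1}\o)$ makes each sheet $\{(\o,F(\o)+k)\}$ invariant under the skew action, so every ergodic component there is contained in a single sheet; but \thmref{th:horocycle} says the conservative ergodic components of the action on $\wt{\pt F}$ are full $\Z$-fibers over ergodic components downstairs, which is incompatible with such a splitting on any set of positive measure. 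Hence $\m(A\cap\C)=0$, i.e.\ $\m(\O\setminus\La^{horS})=0$, as required.
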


\begin{proof}
As it follows from \thmref{thp:3}, \propref{pr:RN} and \thmref{th:2}, the sets
$\Si$ and~$\O$ coincide $\m$-mod 0. We shall show that
$\m(\La^{horB}\setminus\La^{horS})=0$ which would imply the claim. Indeed, on
the $H$-invariant set $A=\La^{horB}\setminus\La^{horS}$, which is contained
(mod~0) in the conservative part of the action, the projection $\wt{\pt
F}\cong\pt F\times\Z\to\pt F$ admits a measurable $H$-equivariant section
(which consists in assigning to a boundary point $\o$ the ``smallest''
horosphere centered at $\o$ and containing an infinite number of points from
$H$). It implies that the ergodic components of the skew action of $H$ on
$A\times\Z\subset\wt{\pt F}$ are given by taking this section and its shifts
over the ergodic components of the action of $H$ on $A$, the latter being
impossible by \thmref{th:horocycle} on a set of positive measure.
\end{proof}

\begin{rem} \label{rem:s1}
Coincidence (mod 0) of the conservative part of the boundary action with the
big horospheric limit set $\La^{horB}$ is actually true in much greater
generality of an arbitrary Gromov hyperbolic space endowed with a
quasi-conformal boundary measure \cite{Kaimanovich10}. The proof uses the fact
that, by definition, the logarithms of the Radon--Nikodym derivatives of this
measure are (almost) proportional to the Busemann cocycle, in combination with
the criterion from \thmref{thp:3}.
\end{rem}

\subsection{Boundary action and random walks} \label{sec:RW}

An important aspect of the boundary behaviour is related to the asymptotic
properties of \emph{random walks} on the group $F$ and on the Schreier graph
$X$ (see \cite{Kaimanovich-Vershik83,Kaimanovich00a} and the references
therein for the general background), and, in particular, to the fact that the
uniform measure on the boundary $\m$ can be interpreted as the harmonic
measure of the simple random walk on $F$.

Let $\mu$ be the probability measure on the group $F$ equidistributed on the
generating set~$\bA$. Then the random walk $(F,\mu)$ is precisely the
\emph{simple random walk} on the Cayley graph $\G(F,\bA)$, i.e., for any $g\in
F$ the transition probability $\pi_g=g\mu$ is equidistributed on the set of
neighbors of $g$ in the graph $\G(F,\bA)$. Moreover, at each point $g\in F$
the increment $\h\in\bA$ is precisely the label of the edge along which the
random walk moves to a new position.

The following result (which we reformulate in modern terms) is due to Dynkin
and Maljutov \cite{Dynkin-Malutov61}:

\begin{thm} \label{th:DM}
Sample paths of the simple random walk $(F,\mu)$ converge a.e.\ to the
boundary $\pt F$, the hitting distribution is the uniform measure $\m$, and
the space $(\pt F,\m)$ is isomorphic to the Poisson boundary of this random
walk.
\end{thm}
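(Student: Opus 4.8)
The plan is to prove the three assertions --- a.e.\ convergence, identification of the hitting measure, and maximality of the boundary --- in turn, the first two being elementary consequences of the tree structure and the third being the substantive point.

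\emph{Convergence.} Write $w_n=\h_1\h_2\cdots\h_n\in F$ for the position after $n$ steps, where the increments $\h_i\in\bA$ are i.i.d.\ uniform. Since $\G(F,\bA)$ is a tree, at each step the reduced length changes by $\pm1$: given $w_{n-1}\neq e$, exactly one of the $2m$ possible values of $\h_n$ cancels the last letter, so $|w_n|=|w_{n-1}|+1$ with probability $(2m-1)/2m$ and $|w_n|=|w_{n-1}|-1$ with probability $1/2m$. Hence $|w_n|$ stochastically dominates a drift-$\tfrac{m-1}{m}>0$ nearest-neighbour walk on $\Z_{\ge0}$, and the law of large numbers gives $|w_n|\to\infty$ a.e. As soon as $\min_{k\ge n}|w_k|>j$ the length-$j$ prefix of $w_n$ can never again be cancelled, so it is final; letting $j\to\infty$ produces a consistent sequence of prefixes, i.e.\ a well-defined limit $\o=\bnd(w_\bullet)\in\pt F\cong\bA^\infty_r$. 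This gives a measurable, $F$-equivariant boundary map and a.e.\ convergence.

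\emph{The hitting measure is $\m$.} Let $\nu$ be the law of $\o$; it suffices to check that $\nu$ and $\m$ agree on cylinders $C_g$, $g\in F\setminus\{e\}$. By transience the walk has a last visit to any fixed vertex. At the last visit to $e$ the next step is to one of the $2m$ neighbours, after which the walk stays in the corresponding cone, so by the symmetry of the $(2m)$-regular tree fixing $e$ one gets $\nu(C_a)=1/2m$ for $|a|=1$. For $|g|\ge2$ write $g=g^-a$ with $|g|=|g^-|+1$; on the event $\{\o\in C_{g^-}\}$ the walk eventually stays in the cone beyond $g^-$, so after its last visit to $g^-$ it must step to one of the $2m-1$ children of $g^-$, each equally likely by the symmetry fixing $g^-$ and permuting its children's subtrees. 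Thus
\[
\nu(C_g)=\nu(C_{g^-})\cdot\frac1{2m-1},
\]
and iterating gives $\nu(C_g)=\tfrac1{2m}(2m-1)^{-(|g|-1)}=\m(C_g)$ by \eqref{eq:cylmeas}. (Equivalently, the first-passage probability on the tree is $\beta=1/(2m-1)$, whence the Martin kernel is $(2m-1)^{b_\o(g)}$, matching the Radon--Nikodym cocycle of \propref{pr:RN}; since the $F$-action on $(\pt F,\m)$ is ergodic, two probability measures sharing this cocycle coincide.) As cylinders generate the Borel structure, $\nu=\m$, and stationarity of $\m$ is automatic.

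\emph{$(\pt F,\m)$ is the Poisson boundary.} By the two previous steps $(\pt F,\m)$ is a $\mu$-boundary, hence a measurable quotient of the Poisson boundary; the remaining --- and main --- task is maximality. I would establish it through the entropy criterion of Kaimanovich--Vershik \cite{Kaimanovich-Vershik83}: $\mu$ has finite support (so finite entropy and finite first moment), the walk has positive speed $\tfrac{m-1}{m}$ and positive Avez entropy $h=\lim_n H(w_n)/n$, and a $\mu$-boundary is maximal precisely when its Furstenberg entropy equals $h$. Using the explicit cocycle $dg\m/d\m=(2m-1)^{-b_\o(g)}$ from \propref{pr:RN}, the Furstenberg formula expresses the boundary entropy as an integral of the Busemann function against $\mu\otimes\m$; comparing it with $h$ and obtaining equality is the crux of the argument. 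Alternatively one may invoke the original computation of Dynkin--Maljutov identifying the full Martin compactification of $(F,\mu)$ with $\wh F$ and showing every kernel $(2m-1)^{b_\o(\cdot)}$ is minimal harmonic, so that the minimal Martin boundary with its harmonic measure is exactly $(\pt F,\m)$. The entropy equality (equivalently, minimality of all Martin kernels) is where the real work lies; the convergence and hitting-measure parts are essentially bookkeeping on the tree.
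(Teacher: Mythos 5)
The paper does not prove this statement at all: Theorem~\ref{th:DM} is quoted as a known result of Dynkin and Maljutov \cite{Dynkin-Malutov61}, merely ``reformulated in modern terms''. So any comparison is between your argument and the literature, not a proof in the text. Your first two steps are correct and complete. The drift computation ($|w_n|$ increases with probability $(2m-1)/2m$ off the identity, hence tends to infinity linearly, hence prefixes stabilize) is the standard and right way to get a.e.\ convergence, and the last-exit/symmetry computation $\nu(C_g)=\nu(C_{g^-})/(2m-1)$ correctly recovers \eqref{eq:cylmeas}; the only care needed there is the observation (which you implicitly use) that on $\{\o\in C_{g^-}\}$ the nearest-neighbour walk must actually visit $g^-$ and that the step taken at the last visit cannot be towards $e$ --- both fine on a tree.

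The third step, however, is a strategy rather than a proof, and you say so yourself. Declaring that ``comparing the Furstenberg entropy with $h$ and obtaining equality is the crux'' leaves the crux undone: to close the argument via \cite{Kaimanovich-Vershik83} you must actually compute both quantities and check they agree. For the record this is routine here: the speed is $\ell=(m-1)/m$, the Avez entropy is $h=\ell\log(2m-1)$ (the walk at time $n$ is asymptotically equidistributed, up to subexponential factors, on the sphere of radius $\ell n$), and the Furstenberg entropy of $(\pt F,\m)$ is $\log(2m-1)\int\! b_\o(g^{-1})\,d\mu(g)\,d\m(\o)=\ell\log(2m-1)$ by \propref{pr:RN}, so equality does hold --- but this computation is the content of the theorem's hardest part and should appear explicitly. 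Your fallback, invoking the Martin-boundary identification of \cite{Dynkin-Malutov61}, is of course legitimate, but at that point you are citing exactly what the paper cites, so nothing is being proved. In short: convergence and identification of the hitting measure are fully and correctly established by elementary tree bookkeeping; maximality of the boundary is only reduced to a standard but unperformed entropy calculation (or to the original reference).
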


There is an obvious one-to-one correspondence between the harmonic functions
of the simple random walk on the Schreier graph $X$ (the definition of which
--- the same as for the simple random walk $(F,\mu)$ --- takes into account
eventual loops and multiple edges in~$X$) and $H$-invariant $\mu$-harmonic
functions on $F$. This situation is a very specific case of a general theory
of covering Markov operators developed in \cite{Kaimanovich95}. In particular,
\cite[Theorem 2.1.4]{Kaimanovich95} implies:

\begin{thm} \label{th:pois}
The space of ergodic components of the action of $H$ on $(\pt F,\m)$ is
canonically isomorphic to the Poisson boundary of the simple random walk on
the Schreier graph~$X$. In particular, the action is ergodic if and only if
this random walk is Liouville ($\equiv$ has no non-constant bounded harmonic
functions).
\end{thm}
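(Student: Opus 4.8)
The plan is to transport the question about ergodic components of the boundary action into the language of bounded harmonic functions, and then to use the correspondence, noted just above the statement, between $H$-invariant $\mu$-harmonic functions on $F$ and harmonic functions of the simple random walk on the Schreier graph $X$. The guiding principle is that, for a Lebesgue space carrying a measure class preserving action of a countable group, the (Lebesgue) space of ergodic components is encoded by the algebra of $H$-invariant functions in $L^\infty(\pt F,\m)$, while the Poisson boundary of the simple random walk on $X$ is encoded by the space of its bounded harmonic functions. It therefore suffices to produce a canonical identification of these two function algebras.

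First I would invoke \thmref{th:DM}: since $(\pt F,\m)$ is the Poisson boundary of $(F,\mu)$, the Poisson formula furnishes an isomorphism $\hat f\mapsto f$ between $L^\infty(\pt F,\m)$ and the space of bounded $\mu$-harmonic functions on $F$, given by
\[
f(g)=\int_{\pt F}\hat f(\o)\,dg\m(\o)\;,\qquad g\in F\;,
\]
where $g\m=g_*\m$ is the harmonic measure of the walk started at $g$. The heart of the argument is to check that under this isomorphism $H$-invariant harmonic functions correspond precisely to $H$-invariant boundary functions. Using $(hg)\m=h_*(g\m)$ one computes $f(hg)=\int\hat f(h\o)\,dg\m(\o)$; comparing this with $f(g)=\int\hat f(\o)\,dg\m(\o)$ for all $g\in F$, and appealing to injectivity of the Poisson representation (the defining faithfulness property of the boundary $(\pt F,\m)$), one sees that $f(hg)=f(g)$ for all $g$ holds if and only if $\hat f(h\o)=\hat f(\o)$ for $\m$-a.e.\ $\o$. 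Thus $f$ is $H$-invariant exactly when $\hat f$ is $H$-invariant $\m$-\textup{mod 0}.

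It remains to assemble the identifications. On one side, the $H$-invariant elements of $L^\infty(\pt F,\m)$ are, by the very definition of the ergodic decomposition, exactly the functions measurable with respect to the partition into ergodic components, i.e.\ $L^\infty$ of the space of ergodic components of the action of $H$ on $(\pt F,\m)$. On the other side, the $H$-invariant bounded $\mu$-harmonic functions on $F$ are in canonical bijection with the bounded harmonic functions of the simple random walk on $X=H\bs F$ (via $\bar f(Hg)=f(g)$, the correspondence recalled before the statement), which in turn is $L^\infty$ of the Poisson boundary of that walk. Chaining these isomorphisms of $L^\infty$-algebras yields the desired canonical isomorphism. The final assertion is then immediate: ergodicity of the boundary action means the space of ergodic components is trivial, equivalently every bounded harmonic function on $X$ is constant, i.e.\ the walk is Liouville. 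I expect the main obstacle to be the measure-theoretic bookkeeping: passing rigorously from an isomorphism of $L^\infty$-algebras to a (mod 0) isomorphism of the underlying Lebesgue spaces, and verifying that the intermediate maps are genuinely $H$-equivariant and canonical rather than merely abstract identifications.
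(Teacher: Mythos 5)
Your proposal is correct and follows essentially the same route as the paper, which only sketches this argument: it invokes \thmref{th:DM} together with the correspondence between $H$-invariant $\mu$-harmonic functions on $F$ and harmonic functions of the simple random walk on $X$, deferring the details to \cite{Kaimanovich95}. Your write-up simply makes explicit the Poisson-transform bookkeeping that the paper leaves implicit.
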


Yet another corollary of the general theory (see \cite[Theorem
3.3.3]{Kaimanovich95}) is

\begin{thm} \label{th:normalcons}
The action of any non-trivial normal subgroup $H\lhd F$ on $(\pt F,\m)$ is
conservative.
\end{thm}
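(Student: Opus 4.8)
The plan is to combine the description of the conservative part from \thmref{th:2} with the ergodicity of the \emph{ambient} group $F$ on its own Poisson boundary, thereby reducing the claim to the single statement that the $H$-action is not completely dissipative. First I would record that, by \propref{pr:RN}, \thmref{thp:3} and \thmref{th:2}, the conservative part of the $H$-action coincides $\m$-mod $0$ with the set
$$
\Si=\Bigl\{\o\in\pt F:\ \sum_{h\in H}(2m-1)^{-b_\o(h)}=\infty\Bigr\}
$$
of \eqref{eq:Si}; here one uses that the boundary action is essentially free and that $(2m-1)^{-b_\o(h)}=\tfrac{dh\m}{d\m}(\o)$ is exactly the orbit-measure weight $\mu_\o(h\o)$ of \thmref{thp:3}.

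The first key point is that, \emph{because $H$ is normal}, the set $\Si$ is invariant under the whole group $F$, not merely under $H$. Indeed, for $g\in F$ normality gives $g(H\o)=H(g\o)$, so $g$ permutes the $H$-orbits; and since $|b_\psi(g)|\le|g|$ for every $\psi\in\pt F$, one has
$$
(2m-1)^{-|g|}\le\frac{dg\m}{d\m}(\psi)\le(2m-1)^{|g|}
\qquad\forall\,\psi\in\pt F,
$$
so the cocycle identity shows that the series defining $\Si$ at $\o$ and at $g\o$ are comparable up to a factor depending only on $g$, hence converge or diverge simultaneously. Thus $\Si$ is $F$-invariant. Since $(\pt F,\m)$ is the Poisson boundary of the simple random walk on $F$ (\thmref{th:DM}) and the action of a group on its own Poisson boundary is ergodic, I conclude $\m(\Si)\in\{0,1\}$. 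Consequently it suffices to prove $\m(\Si)>0$, i.e.\ that the $H$-action is not completely dissipative; the zero-one law then upgrades this to $\m(\Si)=1$, and \thmref{th:2} yields conservativity.

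The heart of the argument, and the step I expect to be the main obstacle, is precisely this non-degeneracy. Here I would pass to the random-walk picture of \thmref{th:pois}: for normal $H$ the Schreier graph $X=H\bs F$ is the Cayley graph of the quotient $G=F/H$, the space of ergodic components of the $H$-action is the Poisson boundary $B$ of the simple random walk on $G$, and complete dissipativity would mean that $\m$-almost every fibre of the projection $(\pt F,\m)\to B$ is a single $H$-orbit carrying a purely atomic conditional measure. To exclude this I would invoke the theory of covering Markov operators of \cite{Kaimanovich95}: the covering $\G(F,\bA)\to\G(G,\bA)$ is regular with deck group $H$, and for such a cover the conditioned walks over $B$-almost every $b\in B$ still wander through infinitely many $H$-translates, so the conditional measures are non-atomic and the deck-group action on the boundary of the cover is conservative. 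It is worth stressing why no softer argument replaces this. Integrating the orbit series only gives $\int_{\pt F}\sum_{h\in H}(2m-1)^{-b_\o(h)}\,d\m=\sum_{h\in H}1=\infty$, which fails to force $\m(\Si)>0$ since an everywhere-finite series may still have infinite integral. Nor is the issue recurrence of the walk on $G$: for $G=\Z^d$ with $d\ge3$ this walk is transient (so the walk on $F$ returns to $H$ only finitely often), yet $G$ is Liouville and the action is even ergodic by \thmref{th:pois}. Conservativity is therefore produced by the abundance of elements $h\in H$ with $b_\o(h)$ merely bounded rather than by a few with $b_\o(h)$ very negative (as in \propref{pr:RNOmega}), and capturing this moderate-term contribution is exactly the delicate information supplied by the covering theory.
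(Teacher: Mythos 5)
Your argument is sound step by step, but at its decisive moment it does exactly what the paper does: the paper states \thmref{th:normalcons} as a ``direct corollary of the general theory of Poisson boundaries of covering Markov operators from \cite{Kaimanovich95}'', and your ``heart of the argument'' is likewise an invocation of that same theory (``the conditional measures are non-atomic and the deck-group action on the boundary of the cover is conservative''). Since what you import from \cite{Kaimanovich95} is already the full statement --- non-atomicity of the conditional measures over the ergodic components is, via the description of the Hopf decomposition recalled before \thmref{thp:3}, precisely conservativity --- your preliminary reduction (the $F$-invariance of $\Si$ for normal $H$ via the bound $|b_\psi(g)|\le|g|$ and conjugation, plus ergodicity of the ambient $F$-action to get a zero--one law) is correct but ends up redundant: you reduce conservativity to the weaker ``not completely dissipative'' and then cite a result that gives the stronger conclusion anyway. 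Your diagnosis of why softer arguments fail (the integrated orbit series being infinite proves nothing; transience of the walk on $F/H$ is not the obstruction) is accurate and matches the paper's implicit logic. Do note, however, that the paper also contains a genuinely self-contained second proof which you could have used to close the gap internally: since $v_{F/H}<v_F=2m-1$ for every non-trivial normal subgroup (by \cite{Grigorchuk-delaHarpe01}), \corref{cor:pr5} (equivalently \thmref{th:cor3}, i.e.\ $|S_X^n|/|S_F^n|\to 0$) yields conservativity directly; combined with your zero--one law observation this ``quantitative'' route would make the whole argument independent of the covering-operator machinery.
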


\begin{rem} \label{rem:s3}
A similar result in the hyperbolic setup is due to Matsuzaki: the boundary
action of any normal subgroup of a divergent type discrete group $G$ of
isometries of the hyperbolic space $\HH^{d+1}$ is conservative with respect to
the associated Patterson measure class. It was first established when the
critical exponent $\d$ of the group $G$ is $d$ (i.e., the corresponding
Patterson measure class coincides with the boundary Lebesgue measure class)
\cite{Matsuzaki93}, and later extended to the case of an arbitrary $\d$
\cite{Matsuzaki02}. For $\d\ge d/2$ this result also readily follows from the
theory of covering Markov operators \cite[Theorem~4.2.4]{Kaimanovich95}.
\end{rem}

The situation when $\m\La=0$ can be completely characterized in terms of the
simple random walk on the Schreier graph.

\begin{prop} \label{pr:RWL}
$\m\La=0$ if and only if a.e.\ sample path of the simple random walk on~$X$
eventually stays inside a certain hanging branch.
\end{prop}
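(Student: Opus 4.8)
The plan is to read off both sides of the equivalence from the asymptotic behaviour of the simple random walk on $F$ and its projection to the Schreier graph. Let $(Z_n)$ denote the simple random walk on $F$ issued from $e$, and let $\ov Z_n$ be its image in $X$ (issued from $o$); the covering structure $F\to X$ makes $(\ov Z_n)$ the simple random walk on $X$ (cf.\ \thmref{th:pois}). By \thmref{th:DM} the paths $(Z_n)$ converge a.e.\ to a boundary point $\o\in\pt F$ whose distribution is the uniform measure $\m$; in particular $|Z_n|\to\infty$ a.e., and, since convergence in the tree compactification means precisely that the Gromov product grows, $(Z_n\,|\,\o)\to\infty$ a.e. Consequently $\m\La=\P(\o\in\La)$, and it suffices to prove the a.e.\ equality of events
\begin{equation*}
\{\ov Z_n \text{ eventually stays in a hanging branch}\} = \{\o\notin\La\}\;.
\end{equation*}
Granting this, $\P(\ov Z_n \text{ eventually in a hanging branch})=\P(\o\notin\La)=1-\m\La$, which equals $1$ exactly when $\m\La=0$.

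To identify the two events I would use \thmref{th:full}, by which $\o\notin\La$ if and only if the non-backtracking path $\pi(\o)\in\Paths_o^\infty(X)$ eventually stays inside a (maximal) hanging branch $B$. The key geometric input is that, since $B$ is isomorphic with its labelling to a branch of the Cayley tree of $F$, the covering map lifts $B$ to a disjoint family of genuine branches $\wt B\subset F$, each mapped isomorphically onto $B$ and each with interior vertices of full degree $|\bA|$. For the inclusion $\supseteq$, suppose $\o\notin\La$ and let $p=[\o]_j$ be a point of the ray $[e,\o)$ lying deep in the interior of the lift $\wt B$ determined by $\o$. Because $\liminf_n (Z_n\,|\,\o)=\infty$, for all large $n$ one has $(Z_n\,|\,\o)>j$, i.e.\ $p$ lies on the geodesic $[e,Z_n]$, so $Z_n$ is a descendant of $p$ and hence lies in $\wt B$ (a branch contains all descendants of its interior vertices). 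Projecting, $\ov Z_n$ eventually stays in $B$.

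For the reverse inclusion, suppose $\ov Z_n\in B$ for all $n\ge N$. Lifting the path from time $N$, and using that $B$ lifts isomorphically, we see that $Z_n$ stays, for $n\ge N$, inside a single branch $\wt B\subset F$. Since $Z_n\to\o$ while $Z_n\in\wt B$ eventually, the limit satisfies $\o\in\pt\wt B$, so $\pi(\o)\in\pt B$; thus $\pi(\o)$ eventually stays in $B$ and $\o\notin\La$ by \thmref{th:full}. This establishes the a.e.\ equality of the two events and hence the proposition. The main point to get right is the \emph{confinement} (``eventually stays'' rather than merely ``visits''): it is supplied in one direction by the a.e.\ relation $\liminf_n(Z_n\,|\,\o)=\infty$ and in the other by the fact that hanging branches lift isomorphically, so that eventual confinement of $(\ov Z_n)$ to $B$ and of $(Z_n)$ to a single lift $\wt B$ are equivalent.
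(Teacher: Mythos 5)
Your argument is correct and is essentially the paper's own proof: both identify the simple random walk on $X$ with the projection (equivalently, the label-lift) of the simple random walk on $F$, invoke \thmref{th:DM} to get a.e.\ convergence to a $\m$-distributed boundary point $\o$, and then reduce to \thmref{th:full}(i) via the equivalence ``the sample path is eventually confined to a hanging branch iff $\pi(\o)$ is.'' The only difference is that you spell out this last equivalence (via the Gromov product and the isomorphic lifting of branches), which the paper asserts in a single sentence.
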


\begin{proof}
Using the labelling of edges, any sample path $(x_n)$ of the simple random
walk on $X$ can be uniquely lifted to a sample path $(g_n)$ of the simple
random walk on $F$. The latter a.e.\ converges to a boundary point $\o\in\pt
F$, and the distribution of $\o$ is precisely the measure $\m$
(\thmref{th:DM}). A sample path $(x_n)$ eventually stays inside a hanging
branch if and only if the path $\pi(\o)$ does so, whence the claim in view of
\thmref{th:full}(i).
\end{proof}

\begin{cor} \label{cor:RWD}
If the simple random walk on the Schreier graph $X$ is such that a.e.\ sample
path eventually stays inside a certain hanging branch, then the boundary
action is completely dissipative.
\end{cor}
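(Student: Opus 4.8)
The plan is to observe that the hypothesis is, via \propref{pr:RWL}, exactly the statement that the full limit set is $\m$-null, and then to feed this into the identification of the conservative part with the Schreier limit set provided by \thmref{th:2}. No wandering set needs to be produced by hand: complete dissipativity will follow formally once the conservative part is shown to be null.

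First I would translate the probabilistic assumption into a measure-theoretic statement on the boundary. By \propref{pr:RWL}, the condition that a.e.\ sample path of the simple random walk on $X$ eventually stays inside a certain hanging branch is equivalent to $\m\La=0$. Thus the hypothesis of the corollary is precisely $\m\La=0$.

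Next I would use the inclusion $\O\subseteq\La$. Indeed, \thmref{th:full}(ii) identifies the full limit set as the closure $\ov\O$ of the Schreier limit set, so in particular $\O\subseteq\La$, and therefore $\m\O\le\m\La=0$, i.e.\ $\m\O=0$. Now \thmref{th:2} asserts that $\O$ coincides \textup{(mod 0)} with the conservative part $\C$ of the boundary action of $H$ with respect to $\m$. Combining these gives $\m\C=\m\O=0$, so the conservative part is $\m$-negligible.

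Finally, since the Hopf decomposition reads $\pt F=\C\sqcup\D$ \textup{(mod 0)}, the vanishing of $\C$ means that the dissipative part $\D$ carries full measure, which is exactly the definition of complete dissipativity. The argument is thus a short chain of already-established equivalences, and I do not anticipate any genuine obstacle; the only point worth keeping in mind is the definitional one that complete dissipativity \emph{is} the \textup{(mod 0)} vanishing of the conservative part, so that exhibiting an explicit wandering set is unnecessary once $\m\C=0$ is known.
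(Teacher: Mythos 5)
Your argument is correct and is essentially the paper's intended one: the paper states this as an immediate corollary of \propref{pr:RWL}, the point being exactly that the hypothesis forces $\m\La=0$ and the conservative part is a subset of $\La$ (mod~0), here realized through $\O\subseteq\ov\O=\La$ and \thmref{th:2}. Your closing remark that complete dissipativity is just the (mod~0) vanishing of $\C$ in the Hopf decomposition, so no explicit wandering set is needed, is also the right way to finish.
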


\begin{rem}
The converse of \corref{cor:RWD} is not true, see \exref{ex:mingrowthdiss}.
\end{rem}

\corref{cor:6} implies that the boundary action of any finitely generated
group $H$ of infinite index has a non-trivial dissipative part. Indeed, since
$H$ is finitely generated, the core $X_*$ of the Schreier graph $X$ is finite
(\propref{pr:finite}). As $H$ is of infinite index, the graph $X$ is infinite
and so necessarily has a hanging branch.

In fact, the following dichotomy completely describes the conservativity
properties of the boundary action for finitely generated groups:

\begin{thm} \label{th:pr7}
If $H$ is finitely generated, then the Hopf alternative holds: either
\begin{itemize}
\item[(i)] $H$ is of finite index and its boundary action is ergodic (therefore,
conservative),
\end{itemize}
or
\begin{itemize}
\item[(ii)] $H$ is of infinite index and its boundary action is
completely dissipative.
\end{itemize}
\end{thm}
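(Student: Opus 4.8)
The plan is to split along the value of the index $[F:H]$, using throughout \propref{pr:finite} (finite generation of $H$ is equivalent to finiteness of the core $X_*$) together with \thmref{th:2}, which identifies the conservative part of the action with the Schreier limit set $\O$ ($\m$-mod~$0$). In the finite-index case I would produce ergodicity via random walks, and in the infinite-index case I would prove complete dissipativity by showing that the relevant limit set is $\m$-null.

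Case~(i): $H$ of finite index. Then $X=H\bs F$ has exactly $[F:H]<\infty$ vertices, so it is finite. The simple random walk on a finite connected graph is an irreducible finite-state Markov chain, and by the maximum principle every bounded harmonic function for it is constant; thus the walk is Liouville, and by \thmref{th:pois} the boundary action of $H$ is ergodic. An ergodic action cannot be completely dissipative unless it reduces (mod~$0$) to a single orbit; since $\m$ is purely non-atomic and the action is essentially free, this is impossible, so ergodicity here forces conservativity, giving alternative~(i).

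Case~(ii): $H$ of infinite index. Now $X$ is infinite while, by \propref{pr:finite}, its core $X_*$ is finite, so $X\neq X_*$ and $X$ carries a hanging branch. Since $H$ is finitely generated, \thmref{th:full}(iv) gives $\La=\O$, whence by \thmref{th:2} the conservative part coincides $\m$-mod~$0$ with $\La$; thus complete dissipativity is equivalent to $\m(\La)=0$, and it suffices to prove this. By \thmref{th:full}(i) and \lemref{lem:dec1}, $\o\in\La$ exactly when the non-backtracking path $\pi(\o)$ has an infinite component inside $X_*$. A hanging branch is a subtree attached to the rest of $X$ along its single stem edge, so a non-backtracking path can never leave a branch once it has entered it; therefore $\o\in\La$ if and only if $\pi(\o)$ is eventually confined to the finite vertex set $X_*$, never stepping into a branch.

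It then remains to evaluate this as a survival probability. Under the identification of $(\pt F,\m)$ with non-backtracking paths from $o$ weighted by \eqref{eq:cylmeas}, the successive oriented edges of $\pi(\o)$ form a Markov chain in which, from an oriented edge $[u,v]$, each of the $2m-1$ continuations $[v,w]$ with $w\neq u$ is taken with probability $1/(2m-1)$. Restricting to the finitely many oriented edges lying inside $X_*$ yields a sub-stochastic chain that ``leaks'' precisely when a continuation enters a hanging branch, and $\{\o\in\La\}$ is exactly the event that this finite chain survives forever. The hard part will be the no-trap step: I must rule out a closed communicating class of oriented edges in $X_*$ from which no continuation ever leaks. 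In such a class every visited vertex $v$ would have all $2m$ of its labels returning to $X_*$ (the back-edge together with the $2m-1$ leak-free continuations), so $v$ would have full degree and no attached branch, and all its neighbours would again lie in the class; the vertex set of the class would then be a union of connected components of $X$. As $X$ is connected this set would be all of $X$, contradicting the finiteness of $X_*$ against the infiniteness of $X$. Hence no leak-free closed class exists, the sub-stochastic chain is absorbed into a leak with probability one from every state, $\m(\La)=0$, and the action is completely dissipative. (Alternatively, one may reach $\m(\La)=0$ through \propref{pr:RWL} and \corref{cor:RWD}, the same conclusion following from transience of the simple random walk inside each hanging branch.)
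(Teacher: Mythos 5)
Your case (i) is the paper's argument essentially verbatim (finite Schreier graph, hence Liouville, hence ergodic by \thmref{th:pois}), but your case (ii) takes a genuinely different route. The paper disposes of the infinite-index case in two lines: since the core is finite and the simple random walk in each hanging branch is transient, the simple random walk on $X$ is transient, so almost every sample path is eventually trapped in a single hanging branch, and \propref{pr:RWL} together with \corref{cor:RWD} gives complete dissipativity. You instead work with the non-backtracking edge chain (the chain of \thmref{th:markovedges}): you first reduce complete dissipativity to $\m(\La)=0$ via \thmref{th:full}(iv) and \thmref{th:2}, and then kill the survival probability of the finite sub-stochastic chain on the oriented edges of $X_*$ by ruling out a leak-free closed class. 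Both arguments rest on the same geometric picture (finite core plus absorbing hanging branches); yours is more self-contained in that it avoids transience of the simple random walk and exhibits the combinatorial obstruction explicitly, at the price of some bookkeeping in the no-trap step. One point there should be made explicit: for a vertex $v$ that is the head of an edge $[u,v]$ of the class, the $2m-1$ continuation-heads are visibly heads of edges of the class, but the remaining neighbour $u$ is a priori only a \emph{tail}; to conclude that the vertex set of the class is closed under taking neighbours you need every tail to be a head as well, which holds precisely because the class may be taken to be a closed \emph{communicating} class (equivalently, a terminal strongly connected component of a leak-free closed set), so that every edge of the class is a continuation of another edge of the class. With that understood, the contradiction with the connectivity of $X$ and the finiteness of $X_*$ is correct, and your parenthetical alternative via \propref{pr:RWL} and \corref{cor:RWD} is exactly the proof given in the paper.
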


\begin{proof}
If a subgroup $H$ has a finite index, then the associated Schreier graph $X$
is finite, and therefore the simple random walk on it is Liouville, so that
the boundary action of~$H$ is ergodic by \thmref{th:pois}.

If $H$ is finitely generated of infinite index, then $X$ in an infinite graph
consisting  of a finite core $X_*$ and some hanging branches glued to it (see
\propref{pr:finite}). Since the simple random walk in any hanging branch is
transient, the simple random walk on $X$ is also transient, which implies that
a.e.\ sample path eventually stays inside a certain hanging branch, whence the
claim by \corref{cor:RWD}.
\end{proof}

\begin{rem}
In view of \propref{pr:RWL} the dichotomy from \thmref{th:pr7} can be
reformulated in the following way: for a finitely generated group $H$ either
the Schreier graph~$X$ is finite, or else $\m\La=0$.
\end{rem}

\begin{rem}
For infinitely generated subgroups this dichotomy does not hold, for instance,
see \exref{ex:consdiss}.
\end{rem}

\begin{rem} \label{rem:KarSol}
An unexpected application of \thmref{th:pr7} is a one line conceptual proof of
an old theorem of Karrass and Solitar \cite{Karras-Solitar57}: if $H\le F$ is
finitely generated and contains a non-trivial normal subgroup, then $H$ is of
finite index in $F$ (if $H$ itself is a normal subgroup, this was first proved
by Schreier in his famous 1927 paper \cite{Schreier27}). Indeed, if~$H$
contains a normal subgroup, then its boundary action is conservative by
\thmref{th:normalcons} and \remref{rem:consup}. Since $H$ is finitely
generated, by \thmref{th:pr7} it must be of finite index in $F$. Note that a
recent far-reaching generalization of the Karrass--Solitar theorem
\cite[Corollary 5.13]{Peterson-Thom07} in particular extends it to all
subgroups $H\le F$ with the property that the set $gH\cap Hg$ is infinite for
all $g\in F$. It would be interesting to compare this property with the
conservativity of the boundary action.
\end{rem}

\begin{rem}
Any infinitely generated subgroup $H$ of infinite index with conservative
boundary action readily provides the following example: the action of any
finitely generated subgroup of $H$ is completely dissipative by
\thmref{th:pr7} in spite of conservativity of the action of the whole group
$H$.
\end{rem}

\subsection{Extensions of the boundary action}\label{sec:ergo}

There are two extensions of the boundary action which have natural geometric
interpretations. The first one is the action of $H$ on the space $\pt^2
F:=(\pt F\times\pt F)\setminus\diag$, or, in other words, on the space of
bi-infinite geodesics in the Cayley graph $\G(F,\bA)$. We shall endow it with
the square $\m^2$ of the measure~$\m$.

Ergodicity of this action is equivalent to ergodicity of the (discrete)
geodesic flow on~$X$. The study of the ergodic properties of the action of a
discrete group of hyperbolic isometries of $\mathbb H^n$ on $\pt ^2 \mathbb
H^n$ has a long history beginning with the pioneering works of Hedlund and
E.~Hopf in the 30's for Fuchsian groups. Its current state is given by the
so-called \emph{Hopf--Tsuji--Sullivan theorem}, e.g., see \cite{Sullivan81,
Nicholls89, Kaimanovich94} and the references therein. Analogous results for
the action of a subgroup $H$ of a free group on $\pt ^2 F$ with respect to the
measure $\m^2$ were obtained by Coornaert and Papadopoulos
\cite[Corollaire~D]{Coornaert-Papadopoulos96}. The results from
\cite{Coornaert-Papadopoulos96} actually follow from more general
considerations of Kaimanovich \cite[Theorem~3.3 and the discussion in
Section~3.3.3]{Kaimanovich94}, where the general case of the harmonic measure
of a covering Markov operator on a Gromov hyperbolic space was treated. In our
situation these results can be summarized as the following analogue of the
Hopf--Tsuji--Sullivan theorem:

\begin{thm}[\cite{Kaimanovich94,Coornaert-Papadopoulos96}] \label{th:HTS}
The action of $H$ on $(\pt^2 F,\m^2)$ is either ergodic (therefore,
conservative) or completely dissipative (the \emph{Hopf alternative}). If the
action is ergodic, then
\begin{itemize}
    \item[(i)]
    $\m\La^{rad}=1$;
    \item[(ii)]
    The simple random walk on the Schreier graph $X$ is recurrent;
    \item[(iii)]
    The \emph{Poincar\'e series} $\sum_{h\in H} (2m-1)^{-|h|}$ diverges.
\end{itemize}
Alternatively, if the action is completely dissipative, then
\begin{itemize}
    \item[(i$'$)]
    $\m\La^{rad}=0$;
    \item[(ii$'$)]
    The simple random walk on the Schreier graph $X$ is transient;
    \item[(iii$'$)]
    The Poincar\'e series converges.
\end{itemize}
\end{thm}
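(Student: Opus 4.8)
The plan is to prove the Hopf--Tsuji--Sullivan alternative (\thmref{th:HTS}) by assembling the pieces that are already available in the excerpt, while supplying the genuinely dynamical input separately. The overall strategy rests on a standard principle in the theory of discrete group actions on Gromov hyperbolic spaces: the action of $H$ on the square $\pt^2 F$ with the measure $\m^2$ behaves essentially like the \emph{geodesic flow} on the Schreier graph $X$, and for such flows the Hopf alternative (ergodicity or complete dissipativity, with no intermediate conservative-but-non-ergodic behaviour) is the Hopf--Tsuji--Sullivan dichotomy. Since the paper states (just before \thmref{th:HTS}) that these results are due to Coornaert--Papadopoulos \cite{Coornaert-Papadopoulos96} and Kaimanovich \cite{Kaimanovich94}, the cleanest route is to \emph{cite} the dichotomy itself and then prove the equivalences (i)--(iii) and (i$'$)--(iii$'$) using the combinatorial machinery developed earlier. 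So the first step is to record the dichotomy as the external input, and the remaining work is to tie together the three characterizations within each branch.

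First I would establish the equivalence of (iii)/(iii$'$) with ergodicity/dissipativity via the Poincar\'e series. The natural object is the cocycle description: the Radon--Nikodym derivative of $\m^2$ under $H$ is governed by the Busemann cocycle through \propref{pr:RN} (applied in each coordinate), so the finiteness of the orbital sums $\sum_{h\in H}(2m-1)^{-|h|}$ is exactly the criterion from \thmref{thp:3} for dissipativity of the diagonal action on $\pt^2 F$, after reducing the two Busemann functions at the two endpoints of a generic bi-infinite geodesic to the single distance $|h|$ (this reduction is where the geometry of the tree enters: for $\m^2$-a.e.\ pair $(\o_-,\o_+)$ one has $b_{\o_-}(h)+b_{\o_+}(h)$ comparable to $2|h|-2(\text{Gromov products})$, and the Gromov products are bounded on a.e.\ geodesic passing near a fixed orbit). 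This is the step where I expect the main obstacle to lie: making the passage from ``finiteness of $\mu_x$'' in \thmref{thp:3} to ``convergence of the Poincar\'e series'' fully rigorous requires controlling the Busemann functions at both endpoints simultaneously, and one must check that the relevant shadow/Gromov-product corrections are $\m^2$-a.e.\ bounded so that the two-sided sum converges iff $\sum_h (2m-1)^{-|h|}$ does.

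Next I would link (ii)/(ii$'$), recurrence versus transience of the simple random walk on $X$, to the Poincar\'e series. The Poincar\'e series $\sum_{h\in H}(2m-1)^{-|h|}$ is, up to the standard Green-function identity on the tree, comparable to $\sum_{h\in H} G(o,o\cdot h)$, i.e.\ to the expected number of visits of the lifted random walk to the origin coset; divergence of this sum is precisely recurrence of the simple random walk on the Schreier graph $X$ (here I would invoke \thmref{th:DM} and the correspondence between sample paths on $X$ and on $F$ used in the proof of \propref{pr:RWL}). Finally, for (i)/(i$'$) I would argue that in the ergodic case $\m\La^{rad}=1$: ergodicity of the geodesic flow forces a.e.\ geodesic to return to the core infinitely often within bounded distance of the origin orbit, which by \propref{pr:LaRad} means $\o\in\La^{rad}$ for $\m$-a.e.\ $\o$; conversely in the dissipative case a.e.\ sample path escapes, giving $\m\La^{rad}=0$. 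The dichotomy then glues the three equivalent conditions together, and the zero--one nature of $\m\La^{rad}$ follows from $H$-invariance of $\La^{rad}$ together with ergodicity or dissipativity of the underlying action.
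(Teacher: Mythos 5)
The paper itself gives no proof of \thmref{th:HTS}: it is imported verbatim from Coornaert--Papadopoulos and Kaimanovich (the references cited in the paragraph preceding the statement), and the text never returns to prove any part of it. So your decision to cite the dichotomy as external input is consistent with what the paper actually does, and to that extent there is nothing to compare. Your equivalence (ii)$\Leftrightarrow$(iii) via the Green function is correct and is in fact already present in the paper: on the $2m$-regular tree $F(e,g)=(2m-1)^{-|g|}$, so $\sum_{h}G(e,h)$ is a constant multiple of the Poincar\'e series and equals the Green function $G_X(o,o)$ of the lifted walk; this is exactly the cogrowth criterion $\mathcal G_H(1/(2m-1))<\infty$ quoted at the end of \secref{sec:grcogr}.

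The genuine gap is in your treatment of (iii)/(iii$'$), and it is not merely the technical nuisance you flag. The orbital sum from \thmref{thp:3} for the diagonal action on $(\pt^2F,\m^2)$ is $\sum_{h}(2m-1)^{-b_{\o_1}(h)-b_{\o_2}(h)}$, and in a tree one has the exact identity $b_{\o_1}(h)+b_{\o_2}(h)=2\,d(h,\gamma)-2(\o_1|\o_2)$, where $\gamma$ is the bi-infinite geodesic $(\o_1,\o_2)$. Hence the orbital sum is, up to the fixed factor $(2m-1)^{2(\o_1|\o_2)}$, equal to $\sum_h(2m-1)^{-2d(h,\gamma)}$ --- not to the Poincar\'e series. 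Your claim that ``the Gromov products are bounded on a.e.\ geodesic passing near a fixed orbit'' is false as stated ($(h|\o_1)+(h|\o_2)$ is unbounded in $h$ whenever $H$ is infinite), and bridging $\sum_h(2m-1)^{-2d(h,\gamma)}$ with $\sum_h(2m-1)^{-|h|}$ for $\m^2$-a.e.\ $\gamma$ is precisely the content of the Hopf--Tsuji--Sullivan theorem: the convergence direction is a Borel--Cantelli/shadow estimate, but the divergence direction (Poincar\'e series diverges $\Rightarrow$ the orbital sum diverges a.e., hence conservativity, hence ergodicity) requires a second-moment or inductive shadow argument that your sketch does not supply and that cannot be absorbed into the citation once you have committed to proving the equivalences yourself. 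A second, smaller error: the zero--one law for $\m\La^{rad}$ does not follow from ``$H$-invariance together with ergodicity or dissipativity of the underlying action,'' since the action of $H$ on $(\pt F,\m)$ is in general neither ergodic nor completely dissipative (that is the whole point of the paper); the dichotomy for $\m\La^{rad}$ must come from the $\pt^2F$-alternative itself, e.g.\ via the convergence-case Borel--Cantelli bound $\m\{(h|\o)\ge|h|-N\}\asymp(2m-1)^{N-|h|}$ and, in the divergence case, from ergodicity of the square action.
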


\begin{cor} \label{cor:Pconv}
If the action of $H$ on $(\pt F,\m)$ is dissipative, then the Poincar\'e
series converges.
\end{cor}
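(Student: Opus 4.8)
The plan is to deduce this corollary from the Hopf alternative for the action on the square $\pt^2 F$ (\thmref{th:HTS}) by transferring dissipativity from $\pt F$ up to $\pt^2 F$ through a simple product construction. First I would unwind the hypothesis: by definition, the action of $H$ on $(\pt F,\m)$ being dissipative means that there is a \emph{wandering} set $A\subset\pt F$ with $\m(A)>0$, i.e.\ the translates $hA$, $h\in H$, are pairwise disjoint.

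The key step is to promote $A$ to a wandering set for the action on $\pt^2 F$. I would consider the product $A\times A$, viewed as a subset of $\pt^2 F=(\pt F\times\pt F)\setminus\diag$. Since $\m$ is purely non-atomic, the diagonal is $\m^2$-null, so that $\m^2(A\times A)=\m(A)^2>0$. Moreover $h(A\times A)=hA\times hA$, and for $h\neq h'$ one has $(hA\times hA)\cap(h'A\times h'A)=(hA\cap h'A)\times(hA\cap h'A)=\emptyset$ because the $hA$ are pairwise disjoint in $\pt F$. Hence $A\times A$ is a wandering set of positive $\m^2$-measure for the $H$-action on $(\pt^2 F,\m^2)$, so this action is dissipative.

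Finally I would invoke \thmref{th:HTS}: the action of $H$ on $(\pt^2 F,\m^2)$ obeys the Hopf alternative, hence is either ergodic (therefore conservative, and so admitting no wandering set of positive measure) or completely dissipative. Having just produced such a wandering set, the ergodic alternative is excluded, so the action on $\pt^2 F$ is completely dissipative; item (iii$'$) of \thmref{th:HTS} then yields convergence of the Poincar\'e series $\sum_{h\in H}(2m-1)^{-|h|}$, which is the assertion.

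The argument is short and I do not expect a serious obstacle. The only points requiring care are verifying that $A\times A$ is genuinely wandering (rather than merely having translates disjoint off the diagonal) and that deleting $\diag$ does not change its measure; both follow at once from the pure non-atomicity of $\m$. Conceptually, the substance of the corollary is entirely contained in \thmref{th:HTS}, and the present statement is just the easy observation that dissipativity of the action on the boundary forces dissipativity of the action on the square, never the converse.
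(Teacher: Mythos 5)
Your proof is correct and follows essentially the same route as the paper, which simply notes that the action on $(\pt^2 F,\m^2)$ projects onto the action on $(\pt F,\m)$, so that a wandering set $A$ of positive measure downstairs lifts to one upstairs (the paper implicitly uses the preimage $(A\times\pt F)\setminus\diag$ where you use $A\times A$; both work), and then invokes the Hopf alternative of \thmref{th:HTS}. Your extra checks (non-atomicity of $\m$ making $\diag$ null, pairwise disjointness of the products) are exactly the right ones and raise no issues.
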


\begin{proof}
Since the action of $H$ on $(\pt^2 F,\m^2)$ projets to the action on $(\pt
F,\m)$, dissipativity of the latter implies dissipativity of the former.
\end{proof}

\begin{rem}
The action of $F$ on $\pt^3 F$ (and therefore on all the higher products) is
properly discontinuous in view of the existence of an equivariant
\emph{barycenter map} $\pt^3 F\to F$. Hence, it is dissipative for any purely
non-atomic measure.
\end{rem}

Yet another extension of the boundary action is obtained by taking the skew
action of~$H$ on $\wt{\pt F}:=\pt F\times\Z$ determined by the Busemann
cocycle. Geometrically this action is just the action of $H$ on the space of
horospheres in $F$ (see \secref{sec:horolim}). The space~$\wt{\pt F}$ is
endowed with the natural measure $\wt\m$ which is the product of $\m$ and the
counting measure on $\Z$. The ergodic properties of this action essentially
coincide with the ergodic properties of the boundary action, namely:

\begin{thm} \label{th:horocycle}
Let $\pt F=\C\cup\D$ be the decomposition of the boundary $\pt F$ into the
conservative and the dissipative parts of the $H$-action. Then the
conservative and the dissipative parts of the action of $H$ on the space
$\wt{\pt F}$ are $\C\times\Z$ and $\D\times\Z$, respectively. The conservative
ergodic components of the $H$-action on $\wt{\pt F}$ are the preimages of the
ergodic components of the $H$-action on $\C$ under the projection $\wt{\pt
F}\to\pt F$. In particular, the action of $H$ on $\C\times\Z$ is ergodic if
and only if the action of $H$ on $\C$ is ergodic.
\end{thm}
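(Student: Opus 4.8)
The plan is to read off both assertions from the orbit-sum description of the Hopf decomposition (\thmref{thp:3}), exploiting that the cocycle defining the skew action is, up to the constant $\log(2m-1)$, controlled by the Radon--Nikodym cocycle of the boundary action.

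Writing the skew action as $h\cdot(\o,n)=(h\o,\,n+b_{h\o}(h))$ on $\wt{\pt F}=\pt F\times\Z$, the fibrewise shifts preserve the counting measure on $\Z$, so $\wt\m$ is quasi-invariant with
\[
\frac{dh\wt\m}{d\wt\m}(\o,n)=\frac{dh\m}{d\m}(\o)=(2m-1)^{-b_\o(h)}
\]
by \propref{pr:RN}. Because the boundary action is essentially free and the projection $\wt{\pt F}\to\pt F$ already separates the points of any orbit, the skew action is essentially free as well, so \thmref{thp:3} applies verbatim. The total mass of the orbit measure of $(\o,n)$ is then
\[
\sum_{h\in H}\frac{d\wt\m(h(\o,n))}{d\wt\m(\o,n)}=\sum_{h\in H}(2m-1)^{-b_\o(h^{-1})}=\sum_{h\in H}(2m-1)^{-b_\o(h)},
\]
which is exactly the series \eqref{eq:Si} defining $\Si$ and does not depend on $n$. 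Since this series diverges precisely on $\C$ (mod $0$) by \thmref{th:2}, the dissipative part of the skew action is $\D\times\Z$ and its conservative part is $\C\times\Z$; this proves the first assertion.

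For the ergodic components I would work over the conservative part. The projection $p\colon\wt{\pt F}\to\pt F$ is $H$-equivariant, hence carries ergodic components of the skew action onto those of the boundary action, while the fibrewise translation $(\o,n)\mapsto(\o,n+1)$ commutes with $H$. Over a fixed ergodic component $Y\subset\C$ the ergodic decomposition of $Y\times\Z$ is governed by the essential range in $\Z$ of the horocycle cocycle $h\mapsto b_{h\o}(h)$: if this range is all of $\Z$ then $p^{-1}(Y)=Y\times\Z$ is a single ergodic component, whereas if the cocycle were a coboundary on $Y$ then straightening it by a transfer function would exhibit $Y\times\Z$ as the disjoint union of the $\Z$-many invariant slices $Y\times\{k\}$. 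Thus the second assertion is equivalent to the essential range being all of $\Z$ on every conservative ergodic component, and the heuristic reason to expect this is that, by \propref{pr:RN} and \eqref{eq:GB}, the boundary action has Radon--Nikodym cocycle $(2m-1)^{-b_\o(h)}$ and is genuinely of type $\mathrm{III}$, so that its Maharam-type extensions carry no extra invariant structure.

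The step I expect to be the main obstacle is the computation of this essential range. Placing a prescribed $k\in\Z$ in it amounts to finding, for each positive-measure $B\subset Y$, an element $h\in H$ and a positive-measure subset $B'\subset B\cap h^{-1}B$ on which $b_{h\o}(h)=k$; here the conservativity already proved supplies the recurrence returning $h\o$ to $B$, and the explicit Nielsen--Schreier form of the generators, through \eqref{eq:GB}, is what lets the Busemann value be prescribed. The indispensable qualitative conclusion --- and all that is actually needed for the application in \thmref{th:coinc} --- is merely that this range be \emph{nontrivial}, equivalently that the horocycle cocycle be no coboundary on any positive-measure invariant set; this already rules out a measurable equivariant section and with it the $\Z$-many thin components. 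Promoting nontriviality to the full equality with $\Z$, and hence to the verbatim ``in particular'' clause, additionally requires an aperiodicity (no-proper-subgroup) property of the cocycle, and it is precisely there that the fine combinatorics of the Schreier graph $X$ must be used most carefully.
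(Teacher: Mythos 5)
Your treatment of the first assertion is sound and is essentially the intended argument: the fibrewise translations preserve the counting measure on $\Z$, so the Radon--Nikodym cocycle of the skew action coincides with that of the base action, the orbit sums of \thmref{thp:3} reduce to the series \eqref{eq:Si} independently of the fibre coordinate, and the Hopf decomposition of $\wt{\pt F}$ is $(\C\times\Z)\sqcup(\D\times\Z)$ by \thmref{th:2}. The paper does not write this out --- its proof is a reference to the Fuchsian analogue in Kaimanovich's earlier work, based on Sullivan's argument that the boundary action of a Kleinian group is of type $\textup{III}_1$ on its conservative part --- but what you wrote is exactly the computation hiding behind that reference.

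The second assertion, however, you have reduced but not proved, and you say so yourself: everything hinges on showing that the essential range of the $\Z$-valued Busemann cocycle over each conservative ergodic component is all of $\Z$, and that is precisely the step you defer as ``the main obstacle.'' This is a genuine gap, not a formality; the entire content of the statement about ergodic components (and of the ``in particular'' clause) lives in that essential-range computation, which in the paper is supplied by the adaptation of Sullivan's type $\textup{III}_1$ argument to the tree. Your worry about an aperiodicity property is moreover well founded and shows the step cannot be finessed: by \eqref{eq:GB} one has $b_\o(h)=|h|-2(h|\o)$, hence $b_\o(h)\equiv|h|\pmod 2$, so for a subgroup $H$ all of whose elements have even length (for instance the index-two subgroup of even-length words, whose Schreier graph is finite and whose boundary action is therefore ergodic) the cocycle takes only even values and $\pt F\times 2\Z$ is a proper invariant subset of the preimage of the single ergodic component of the base. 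So the aperiodicity is exactly where the difficulty (and, in this degenerate case, an apparent exception to the literal statement) sits, and a proof that stops short of establishing it is incomplete. Your remark that mere nontriviality of the essential range suffices for the application in \thmref{th:coinc} is correct, but that is a strictly weaker statement than the theorem you were asked to prove.
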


The proof of this theorem almost \emph{verbatim} coincides with the proof of
the analogous result for the boundary actions of Fuchsian groups
\cite[Theorem~4.2]{Kaimanovich00} based in turn on Sullivan's proof in
\cite{Sullivan82} (see also \cite{Sullivan81}) of the fact that the boundary
action of a Kleinian group is of type $\textup{III}_1$ on its conservative
part with respect to the Lebesgue measure. The only difference is that in our
situation the range of the Busemann cocycle is $\Z$ (rather than $\Re$ as in
the case of manifolds), so that, in particular, the type of the boundary
action of $H$ on the conservative part is $\textup{III}_\l$ with
$\l=\log(2m-1)$.

\subsection{Examples} \label{sec:exincl}

We shall now give examples showing that all the sets from \thmref{th:3} are,
in general, pairwise distinct.

\begin{ex}
$\La^{rad}\neq\La^{horS}$. By \thmref{th:2}, \thmref{th:coinc},
\thmref{th:pois} and \thmref{th:HTS}, if the simple random walk on $X$ is
transient, but still has the Liouville property, then $\m\La^{rad}=0$, whereas
$\m\La^{horS}=1$. Such examples are readily available already in the case when
the subgroup $H$ is normal and $X$ is the quotient group, see
\cite{Kaimanovich-Vershik83}.
\end{ex}

\begin{ex}
$\La^{horS} \neq \O$. Let us take two geodesic rays $\r_1,\r_2\cong\Z_+$
joined at the origin~$o$, so that $\r_1(0)=\r_2(0)=o$. We construct the
Schreier graph $X$ by first adding the edges $[\r_1(n+1),\r_2(n)]$ for all
$n>0$ and then filling all the deficient valencies with hanging branches. The
geodesic spanning tree $T$ is obtained from $X$ by removing all the edges
$[\r_1(n),\r_1(n+1)]$ for $n>0$, see \figref{fig:ex17}. Let $\o_1\in\pt F$ be
the boundary point which corresponds to the ray $\r_1$ considered as a path
without backtracking in $X$. Then $\o_1\in\O$ because $\r_1$ passes through an
infinite number of edges from $\Edges(X)\setminus\Edges(T)$. On the other
hand, since $\o_1$ corresponds to a geodesic in $X$, $\o\notin\La^{horS}$ by
\propref{pr:geod}.
\end{ex}

\begin{figure}[h]
\begin{center}
     \psfrag{o}[tr][tr]{$o$}
     \psfrag{r1}[Bl][Bl]{$\rho_1(n+1)$}
     \psfrag{r2}[tl][tl]{$\rho_2(n)$}
     \psfrag{o1}[tl][l]{$\o_1$}
     \psfrag{o2}[tl][l]{$\o_2$}
          \includegraphics[scale=.75]{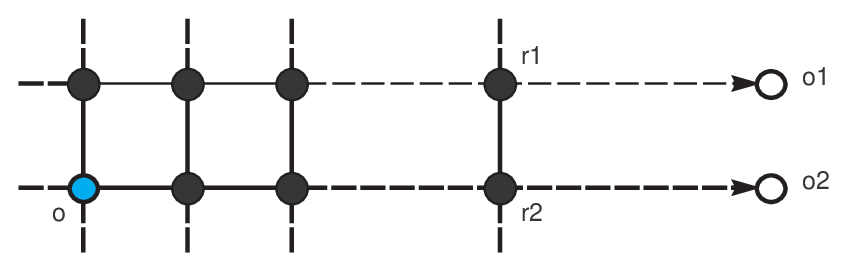}
          \end{center}
          \caption{}
          \label{fig:ex17}
\end{figure}

\begin{ex}
$\O \neq \La^{horB}$. Let $X$ be the same graph as in the previous example.
Take the boundary point $\o_2\in\pt F$ which corresponds this time to the ray
$\r_2$, see \figref{fig:ex17}. Then $\o_2\notin\O$. On the other hand, for any
$n>0$ the generator $s_n$ corresponding to the edge $[\r_1(n),\r_1(n+1)]$ has
the property that $b_{\o_2}(s_n)=2$, whence $\o_2\in\La^{horB}$.
\end{ex}

\begin{ex}
$\La^{horB} \neq \Si$. Let us take a geodesic ray $\r\cong\Z_+$ starting at
the origin $o=\r(0)$ and an integer sequence $d_n$ (to be specified later). We
construct the Schreier graph $X$ by first attaching to each vertex
$\r(n),\,n>0,$ a loop of length $2d_n+1$ and then filling all the deficient
valencies with hanging branches. The spanning geodesic tree $T$ is obtained by
removing from each such loop the middle edge $\E_n$, so that in $T$ there are
two segments of length $d_n$ attached to each point $\r(n)$, see
\figref{fig:ex19}. Denote by $s_n$ the corresponding generators. Let $\o\in\pt
F$ be the boundary point corresponding to the ray~$\r$. Then for any $h\in
H\cong\bS^*_r$ one has the inequality $b_\o(h)\ge\sum_n t_n(2d_n+1)$, where
$t_n$ is the number of occurrences of $s_n^{\pm 1}$ in $h$. Indeed, by
\eqref{eq:GB} $b_\o(h)=|h|-2(h|\o)$. We can write $|h|=l+\sum_n t_n(2d_n+1)$,
where $l$ is the sum of the lengths of the pieces of the associated path in
$X$ which correspond to moving along the ray $\r$. The latter sum contains the
term $(h|\o)$ which corresponds to the confluent of $h$ and $\r$, and, since
$h$ is a cycle, one has $l\ge 2(h|\o)$, which implies the desired inequality.
Now, if $d_n\uparrow\infty$, then $\o\notin\La^{horB}$. On the other hand,
$b_\o(s_n)=2d_n+1$, and one can still choose $d_n$ is such a way that $\sum_n
(2m-1)^{-2d_n} = \infty$, so that $\o\in\Si$.
\end{ex}

\begin{figure}[h]
\begin{center}
     \psfrag{o}[tr][tr]{$o$}
     \psfrag{r1}[t][t]{$\rho(1)$}
     \psfrag{r2}[t][t]{$\rho(n)$}
     \psfrag{E1}[b][b]{$\E_1$}
     \psfrag{E2}[b][b]{$\E_n$}
     \psfrag{o2}[tl][l]{$\o$}
          \includegraphics[scale=.75]{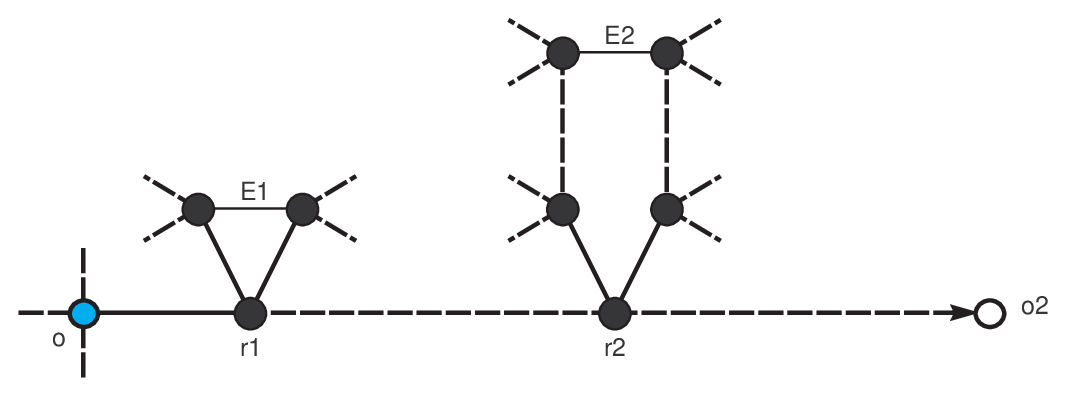}
          \end{center}
          \caption{}
          \label{fig:ex19}
\end{figure}

\begin{ex}
$\Si \neq \La$. Since $\Si$ coincides (mod 0) with the conservative
part of the action (see \thmref{th:coinc}), whenever the boundary
action is completely dissipative we have $\m\Si=0$. On the other
hand, in this situation it is still possible that $\La=\pt F$, i.e.,
that the Schreier graph has no hanging branches
(\corref{cor:limit}), see \exref{ex:mingrowthdiss}.
\end{ex}

\section{Geometry of the Schreier graph} \label{sec:Sch-graph-geometry}

In this Section we shall study the relationship between the ergodic
properties of the action of the subgroup $H$ on the boundary $(\pt
F,\m)$ and the geometry of the Schreier graph $X=H\bs F$ (in
particular, its quantitative characteristics).

\subsection{Growth and cogrowth} \label{sec:grcogr}

Denote by $S_F^n$ and $B_F^n$ (resp., $S_X^n$ and $B_X^n$) the
sphere and the ball of radius $n$ in the Cayley graph $\G(F,\bA)$
(resp., in the Schreier graph $X=\G(H\bs F,\bA)$) centered at the
group identity $e$ (resp., at the point $o=H$). The
\emph{exponential volume growth rate} of $X$ is
$$
v_X=\limsup\left|B_X^n\right|^{1/n} \le v_F=2m-1 \;.
$$
The \emph{cogrowth rate} of $X$ ($\equiv$ the growth rate of $H$ in
$F$) is
$$
v_H = \limsup_{n\to\infty} \left|H\cap B_F^n\right|^{1/n} \le 2m-1
\;,
$$
and it is the inverse of the radius of convergence of the
\emph{cogrowth series}
$$
\mathcal G_H(z)=\sum_n |H\cap S_F^n|z^n \;.
$$

Denote by $\r_F$ (resp., $\r_X$) the spectral radius of the simple random walk
on $F$ (resp., on $X=\G(H\bs F,\bA)$), so that $\sqrt{2m-1}/m=\r_F\le\r_X\le
1$ \cite{Kesten59}. By \cite{Grigorchuk78,Grigorchuk80a}
\begin{equation} \label{eq:rhov}
\r_X = \left\{%
\begin{array}{ll}
    \frac{\sqrt{2m-1}}{m} &, \quad 1\le v_H\le \sqrt{2m-1} \;,
    \vphantom{\text{\Huge (}}\\
    \frac{\sqrt{2m-1}}{2m}\left( \frac{\sqrt{2m-1}}{v_H}
    + \frac{v_H}{\sqrt{2m-1}} \right)
    &, \quad \sqrt{2m-1}\le v_H\le 2m-1 \;, \vphantom{\text{\Huge (}} \\
\end{array}%
\right.
\end{equation}
which implies that $\r_X=\r_F$ if and only if $v_H\le\sqrt{2m-1}$, and
$\r_X=1$ (i.e., the graph~$X$ is \emph{amenable}) if and only if $v_H=2m-1$.
Recall that an infinite connected $(2m)$-regular graph $X$ is called
\emph{Ramanujan} if $\r_X=\sqrt{2m-1}/m$. Therefore, a Schreier graph $X=H\bs
F$ is Ramanujan if and only if the growth of the corresponding subgroup $H\le
F$ satisfies the inequality $v_H\le \sqrt{2m-1}$.

\subsection{Dissipativity}

\begin{thm} \label{th:4}
If $v_H<\sqrt{2m-1}$ then the boundary action of $H$ is completely
dissipative.
\end{thm}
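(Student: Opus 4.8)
The plan is to reduce the statement to an a.e.\ convergence of an orbital series, and then to prove that convergence by a \emph{fractional moment} estimate in which the exponent $1/2$ is dictated precisely by the threshold $\sqrt{2m-1}$.

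First I would pass to measure theory. As established in the proof of \thmref{th:2} (see also \thmref{th:coinc}), the conservative part $\C$ of the boundary action coincides $\m$-mod~0 with the set $\Si$ of \eqref{eq:Si}, the action being essentially free because every nontrivial element of $F$ has exactly two boundary fixed points. Thus, by \thmref{thp:3} and \propref{pr:RN}, the action is completely dissipative if and only if $\m(\Si)=0$, i.e.\ if and only if the orbital series
\[ S(\o)=\sum_{h\in H}(2m-1)^{-b_\o(h)} \]
converges for $\m$-a.e.\ $\o\in\pt F$.

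The main obstacle is that the first moment of $S$ is useless: by \propref{pr:RN} one has $\int_{\pt F}(2m-1)^{-b_\o(h)}\,d\m=h\m(\pt F)=1$ for every $h$, so $\int_{\pt F}S\,d\m=\infty$. The divergence is concentrated on the atypical points $\o$ that lie close to many elements of $H$, so a pure $L^1$ argument cannot succeed. The remedy I propose is to integrate $S^{1/2}$ instead. Since $t\mapsto t^{1/2}$ is subadditive on $[0,\infty)$,
\[ \int_{\pt F}S(\o)^{1/2}\,d\m \le \sum_{h\in H}\int_{\pt F}(2m-1)^{-b_\o(h)/2}\,d\m \;. \]
Writing $b_\o(h)=|h|-2(h|\o)$ via \eqref{eq:GB}, so that $(2m-1)^{-b_\o(h)/2}=(2m-1)^{(h|\o)-|h|/2}$, and evaluating $\int_{\pt F}(2m-1)^{(h|\o)}\,d\m$ by splitting $\pt F$ according to the value $j=(h|\o)$ of the Gromov product (each level $\{(h|\o)=j\}$ being a difference of cylinders of $\m$-measure of order $(2m-1)^{-j}$), one obtains
\[ \int_{\pt F}(2m-1)^{(h|\o)}\,d\m \le C\,(|h|+1) \;, \]
the linear factor arising from the $|h|$ intermediate levels, each contributing an amount of order $1$.

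It then remains to sum over $h$. Combining the last two displays gives
\[ \int_{\pt F}S(\o)^{1/2}\,d\m \le C\sum_{n\ge0}(n+1)\,\bigl|H\cap S_F^n\bigr|\,(2m-1)^{-n/2} \;, \]
and this is exactly where the hypothesis enters. Since $v_H<\sqrt{2m-1}$, I would fix $\e>0$ with $v_H+\e<\sqrt{2m-1}$ and bound $|H\cap S_F^n|\le C_\e(v_H+\e)^n$, so that the general term is dominated by $(n+1)\bigl((v_H+\e)/\sqrt{2m-1}\bigr)^n$ with ratio $<1$; hence the series converges. Consequently $\int_{\pt F}S^{1/2}\,d\m<\infty$, so $S(\o)<\infty$ for $\m$-a.e.\ $\o$, i.e.\ $\m(\Si)=0$, which is complete dissipativity. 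The subtle point to watch is purely the choice of exponent: the full moment matches the critical cogrowth $2m-1$ and yields nothing, whereas the half moment matches $\sqrt{2m-1}$ and converges precisely under the stated strict inequality, the extra polynomial factor $|h|$ being harmless thanks to the strictness.
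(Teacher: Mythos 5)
Your proposal is correct, but it takes a genuinely different route from the paper. The paper never touches the orbital series directly: it covers the Schreier limit set $\O$ by the cylinders $C_g$ with $g=\s_-(s_1)\s_0(s_1)\a(s_1,s_2)\cdots\s_0(s_n)$ attached to the Nielsen--Schreier expansion of $h=s_1\cdots s_n$, uses the geodesic-spanning-tree inequality $|g|\ge|\s(h)|/2$ (from the proof of \propref{pr:RNOmega}) to get $\m C_g\le(2m-1)^{-|\s(h)|/2}$, sums over $H$ and applies Borel--Cantelli, concluding via $\C\subset\O$ from \thmref{th:1}. You instead work with the conservative-part criterion $\C=\Si$ (mod 0) from \thmref{thp:3} and \propref{pr:RN}, and kill $\Si$ by an $L^{1/2}$ bound: your estimate $\int(2m-1)^{(h|\o)}\,d\m\le C(|h|+1)$ is correct (the level $\{(h|\o)\ge j\}$ is the cylinder $C_{[h]_j}$ of measure $\asymp(2m-1)^{-j}$), and the resulting series $\sum_n(n+1)|H\cap S_F^n|(2m-1)^{-n/2}$ indeed converges exactly under $v_H<\sqrt{2m-1}$. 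The same halving appears in both arguments --- as the geometric fact $|g|\ge|h|/2$ in the paper, and as the exponent $1/2$ in your fractional moment --- but what each buys is different. The paper's proof yields the slightly stronger topological statement $\m\O=0$ and stays entirely within the combinatorics of the spanning tree; yours uses only that the Radon--Nikodym cocycle is $(2m-1)^{-b_\o(h)}$, so it transfers verbatim to any quasi-conformal boundary measure on a Gromov hyperbolic space, and is in effect the measure-theoretic counterpart of the Patterson-type shadow argument that \remref{rem:qq} cites as the ``completely different methods'' used for Fuchsian groups.
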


\begin{proof}
For $h=s_1s_2\dots s_n\in\bS_r^*\cong H$ let $C_h\subset\pt H$ be
the corresponding cylinder set, so that
$$
\O = \bigcup_{h\in H} \s^\infty(C_h) \;.
$$
By \eqref{eq:asymp}
$$
\s^\infty(C_h) \subset C_g \;,
$$
where $C_g\subset\pt F$ is the cylinder set based at the word
$$
g = \s_-(s_1)\s_0(s_1)\a(s_1,s_2)\s_0(s_2) \dots \a(s_{n-1},s_n)\s_0(s_n) \in
\bA^*_r\cong F \;.
$$
Since $|g|\ge |\s(h)|/2$ (see the first half of the proof of
\propref{pr:RNOmega}),
$$
\m\s^\infty(C_h) \le \m C_g = \frac{1}{2m(2m-1)^{|g|-1}} \leq
\frac{1}{(2m-1)^{|\s(h)|/2}} \;,
$$
whence
$$
\sum_{h\in H} \m\s^\infty(C_h) < \infty \;,
$$
which implies the claim by Borel--Cantelli lemma, because any point
from $\pt H$ belongs to infinitely many cylinders $C_h$.
\end{proof}

\begin{rem}
The converse of \thmref{th:4} is not true, see \exref{ex:amendiss}.
\end{rem}

\begin{rem} \label{rem:q}
The upper bound $\sqrt{2m-1}$ in \thmref{th:4} is optimal. Indeed, if $N\lhd
F$ is a non-trivial normal subgroup, then its boundary action is always
conservative, see \thmref{th:normalcons}. On the other hand, $N$ being
non-amenable, $\r_{F/N}>\r_F$ by Kesten's theorem \cite{Kesten59}, whence by
formula \eqref{eq:rhov} $v_N>\sqrt{2m-1}$. There are numerous examples of
normal subgroups $N$ for which $\r_{F/N}$ is arbitrarily close to $\r_F$, and
therefore by \eqref{eq:rhov} $v_N$ is arbitrarily close to $\sqrt{2m-1}$. For
instance, if $N_l$ is the kernel of the natural homomorphism
$F=\Z*\Z*\dots*\Z\to \Z_l*\Z_l*\dots*\Z_l$, then $\r_{F/N_l}\to\r_F$ as
$l\to\infty$ by explicit formulas from \cite[Section~9]{Woess00}.
\end{rem}

\begin{rem}
We do not know whether there exist subgroups $H\le F$ whose boundary action is
not completely dissipative (or, even better, is conservative), but
$v_H=\sqrt{2m-1}$. The forthcoming paper \cite{Abert-Glasner-Virag11p}
contains a family of examples of subgroups with $v_H=\sqrt{2m-1}$ (described
in terms of the associated Schreier graphs). However, for all these examples
the action of $H$ is completely dissipative, because the corresponding
Schreier graphs satisfy condition of \propref{pr:RWL}. It is also the case
when the Schreier graph $X$ is radially symmetric. [For, then the radial part
of the simple random walk on $X$ would also have spectral radius $\r_F$ and
would therefore have a positive $\r_F$-invariant function, e.g., see
\cite{Mohar-Woess89} and the references therein. On the other hand, an
explicit calculation shows that if such a function exists, then the number of
cycles in $X$ must be finite, and therefore the action must be completely
dissipative by \thmref{th:pr7}.]
\end{rem}

\begin{rem} \label{rem:qq}
Results similar to \thmref{th:4} are known for Fuchsian groups (where
completely different methods were used). The fact that if the critical
exponent $\d=\d_H$ ($\equiv$ the logarithmic rate of growth with respect to
the Riemannian metric) of a Fuchsian group~$H$ satisfies inequality
$\d<\frac12$, then the boundary action is completely dissipative with respect
to the Lebesgue measure class goes back to Patterson \cite{Patterson77}.
Another proof is given in a recent paper of Matsuzaki \cite{Matsuzaki05},
where examples of Fuchsian groups with conservative boundary action for which
$\d$ is arbitrarily close to $\frac12$ are constructed. However, it is unknown
whether such an example exists with the critical exponent precisely $\frac12$
(cf. \remref{rem:q}).
\end{rem}

\subsection{Conservativity}

Denote by $\g(x)$ the number of edges from the set
$\Edges(X)\setminus \Edges(T)$ incident with a vertex $x\in X$, so
that the degree of $x$ in the spanning tree $T$ is
\begin{equation} \label{eq:deg}
\deg_T(x)=\deg_X(x)-\g(x)=2m-\g(x) \;.
\end{equation}

\begin{prop} \label{pr:5}
\begin{equation} \label{eq:***}
\m\De\ = 1 - \frac{1}{2m}\sum_{x\in X} \frac{\g(x)}{(2m-1)^{|x|}}
\;.
\end{equation}
\end{prop}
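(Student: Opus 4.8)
The plan is to compute the complementary measure $\m(\pt F\setminus\De)$ directly and then subtract from $1$. By the very definition of the Schreier fundamental domain (\defref{def:od}; see also \propref{pr:2}), a word $\o\in\pt F\cong\bA^\infty_r$ fails to lie in $\De$ precisely when it begins with one of the initial segments $\s_-(s)\s_0(s)$, $s\in\bS$ --- equivalently, when the associated path $\pi(\o)$ follows $T$ until it crosses $\E_s$ as its first non-tree edge. Hence
$$
\pt F\setminus\De = \bigcup_{s\in\bS} C_{\s_-(s)\s_0(s)} \;,
$$
a union of cylinder sets indexed by the free generators and their inverses.

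First I would verify that this union is disjoint, which amounts to the assertion that the words $\s_-(s)\s_0(s)$, $s\in\bS$, form a prefix-free family. This follows from the \emph{isolated} (Nielsen) property established in the proof of \thmref{th:bdrymap}: no segment $\s_-(s)\s_0(s)$ occurs as an initial segment of $\s(s')=\s_-(s')\s_0(s')\s_+(s')$ for $s'\neq s$, so in particular it cannot be a prefix of $\s_-(s')\s_0(s')$. Consequently the cylinders are pairwise disjoint and
$$
\m(\pt F\setminus\De)=\sum_{s\in\bS}\m\,C_{\s_-(s)\s_0(s)} \;.
$$

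Next I would evaluate each term and regroup the sum by vertices. The word $\s_-(s)\s_0(s)$ is freely reduced of length $|\s_-(s)|+1$, and by \eqref{eq:s-+} together with the geodesic property of $T$ one has $|\s_-(s)|=d_T(o,x)=d_X(o,x)=|x|$, where $\E_s=[x,y]$; so \eqref{eq:cylmeas} gives $\m\,C_{\s_-(s)\s_0(s)}=\frac{1}{2m(2m-1)^{|x|}}$. By \thmref{th:span} the map $s\mapsto\E_s$ is a bijection between $\bS$ and the oriented edges of $X$ not in $T$, and the number of such oriented edges with initial vertex $x$ is, by the definition of $\g(x)$ in \eqref{eq:deg}, exactly $\g(x)$. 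Summing yields
$$
\m(\pt F\setminus\De)=\frac{1}{2m}\sum_{x\in X}\frac{\g(x)}{(2m-1)^{|x|}} \;,
$$
and subtracting from $1$ proves \eqref{eq:***}.

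The one point requiring genuine care, rather than bookkeeping, is the disjointness step: without the Nielsen/isolated property the cylinders could overlap (or one could be nested inside another) and the sum would overcount. Everything else --- the length computation and the regrouping by initial vertex --- is routine once the correspondence of \thmref{th:span} and the geodesic property of $T$ are invoked. In particular, convergence of the resulting series is automatic, since it equals the measure of a subset of $\pt F$ and is therefore bounded by $1$.
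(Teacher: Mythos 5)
Your proof is correct and follows essentially the same route as the paper's: both write $\pt F\setminus\De$ as the disjoint union of the cylinders $C_{\s_-(s)\s_0(s)}$, apply \eqref{eq:cylmeas} with $|\s_-(s)|=|x|$ (using that $T$ is geodesic), and regroup the sum by the initial vertex of $\E_s$ via the bijection of \thmref{th:span}. The only cosmetic difference is that you justify disjointness through the prefix-free (isolated) property of the words $\s_-(s)\s_0(s)$, whereas the paper cites \propref{pr:trans} and \thmref{th:1} for the same decomposition.
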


\begin{proof}
By \propref{pr:trans} and \thmref{th:1}
$$
\pt F \setminus \De = \bigsqcup_{s\in \bS} C_{\s_-(s)\s_0(s)} \;,
$$
whence by formula \eqref{eq:cylmeas}
$$
\m\De = 1 - \frac1{2m} \sum_{s\in\bS} \frac1{(2m-1)^{|\s_-(s)|}} \;,
$$
which implies the claim in view of the one-to-one correspondence between the
set $\bS$ and the oriented edges not in $T$ established in \thmref{th:span}.
\end{proof}

\begin{thm} \label{th:5}
The sequence
$$
a_n = \frac{|S_X^n|}{|S_F^n|} =
\frac{\left|S_X^n\right|}{2m(2m-1)^{n-1}}
$$
monotonically decreases and converges to $\m\De$.
\end{thm}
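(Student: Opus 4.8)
The plan is to realize $a_n$ itself as the measure of an explicit ``level-$n$ approximation'' $\De_n$ of the Schreier fundamental domain $\De$, and then to pass to the limit $n\to\infty$. First I would introduce, for each $n$, the set
$$
\De_n = \{\o\in\pt F : \text{the first } n \text{ edges of } \pi(\o) \text{ lie in } T\} \;,
$$
where $\pi(\o)\in\Paths_o^\infty(X)$ is the non-backtracking path associated to $\o$ (see \propref{pr:2}). Since the first $n$ edges of $\pi(\o)$ are determined by the initial segment $[\o]_n$, the condition defining $\De_n$ depends only on $[\o]_n$; hence $\De_n$ is a finite disjoint union of $n$-dimensional cylinder sets $C_g$, with $g$ ranging over those length-$n$ words read along non-backtracking paths of length $n$ in $T$ issued from $o$. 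By \eqref{eq:cylmeas} each such cylinder has measure $1/|S_F^n|$, so $\m\De_n = N_n/|S_F^n|$, where $N_n$ is the number of such tree-paths.

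The key combinatorial step is to identify $N_n$ with $|S_X^n|$. Here I would use that in a tree a non-backtracking path issued from a root is automatically a geodesic: arriving at a vertex from its parent, the non-backtracking condition forces the next step to a child, so the $T$-distance to $o$ strictly increases along the path. Consequently a length-$n$ non-backtracking path in $T$ from $o$ is exactly the unique geodesic from $o$ to its (distance-$n$) endpoint, which sets up a bijection between such paths and the vertices at $T$-distance $n$ from $o$. Because $T$ is a \emph{geodesic} spanning tree, $d_T(o,\cdot)=d_X(o,\cdot)$, so these are precisely the vertices of $S_X^n$; thus $N_n=|S_X^n|$ and $\m\De_n = |S_X^n|/|S_F^n| = a_n$.

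Finally, monotonicity and convergence follow from elementary measure theory. A path staying in $T$ for its first $n+1$ edges stays in $T$ for its first $n$ edges, so $\De_{n+1}\subseteq\De_n$, whence $a_n=\m\De_n$ is non-increasing; and by \propref{pr:2} a path stays in $T$ forever precisely when it lies in $\De$, that is, $\De=\bigcap_n\De_n$. Continuity of the probability measure $\m$ from above then yields $a_n=\m\De_n\downarrow\m\De$, as claimed. The only point requiring genuine care is the identification $N_n=|S_X^n|$, which is exactly where the geodesicity of $T$ enters (via $d_T=d_X$); everything else is bookkeeping with cylinder sets and a routine application of continuity of measure.
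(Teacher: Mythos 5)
Your argument is correct, and it organizes the proof differently from the paper. The paper first proves the closed formula $\m\De = 1 - \frac{1}{2m}\sum_{x\in X}\g(x)(2m-1)^{-|x|}$ (\propref{pr:5}) by writing $\pt F\setminus\De$ as the disjoint union of the cylinders $C_{\s_-(s)\s_0(s)}$, $s\in\bS$, and then derives the recursion $|S_X^{n+1}| = (2m-1)|S_X^n| - \g_n$ (each vertex of the geodesic tree $T$ at distance $n$ has $\deg_T(x)-1$ children), which gives monotonicity of $a_n$ directly and, after substituting back into the series and telescoping, the convergence $a_n\to\m\De$. You instead approximate $\De$ from above by the nested sets $\De_n$, identify $\m\De_n=a_n$ via the bijection between length-$n$ non-backtracking paths in $T$ and the vertices of $S_X^n$ (correctly isolating geodesicity of $T$ as the point where $d_T=d_X$ is needed), and finish by continuity of measure. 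The two proofs rest on the same two inputs --- the uniform measure of $n$-cylinders and the coincidence of $T$-spheres with $X$-spheres --- and indeed your decrement $\m\De_n-\m\De_{n+1}$ equals the paper's term $\g_n/\bigl(2m(2m-1)^n\bigr)$, the measure of the set of paths that first exit $T$ at step $n+1$. Your version is more direct and does not need \propref{pr:5} at all; the paper's version yields as a by-product the explicit series \eqref{eq:***} for $\m\De$, which is of independent interest.
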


\begin{proof}
By \propref{pr:5}
\begin{equation} \label{eq:Dsum}
\m\De = 1 - \frac{1}{2m}\sum_{n=0}^{\infty}\frac{\g_n}{(2m-1)^n} \;,
\end{equation}
where
$$
\g_n = \sum_{x\in S_X^n} \g(x) \;.
$$
In view of \eqref{eq:deg} the numbers $|S_X^n|$ and $\g_n$ are
connected by the formulas
$$
|S_X^1| = \deg_T o = \deg_X o - \g_0 = 2m - \g_0
$$
and
$$
|S_X^{n+1}| = \sum_{x\in S_X^n} (\deg_T x - 1) = (2m-1) |S_X^n| -
\g_n \;, \qquad n>0 \;,
$$
which imply monotonicity of $a_n$. Substituting
$$
\g_0 = 2m - |S_X^1| \;,
$$
and
$$
\g_n = (2m-1) |S_X^n| - |S_X^{n+1}| \;, \qquad n>0
$$
into formula \eqref{eq:Dsum} yields the claim.
\end{proof}

As a corollary we immediately obtain:

\begin{thm} \label{th:cor3}
The boundary action of $H$ is conservative if and only if
$$
\lim_n \frac{|S_X^n|}{|S_F^n|} = \lim_n
\frac{|S_X^n|}{2m(2m-1)^{n-1}} = 0 \;.
$$
\end{thm}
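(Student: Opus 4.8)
The plan is to read this statement off directly from \thmref{th:5} together with the Hopf decomposition established in \thmref{th:2}. The central observation is that conservativity of the boundary action is equivalent to the dissipative part being null, and that the dissipative part is accounted for exactly by the translates of the Schreier fundamental domain $\De$. Thus the whole problem reduces to computing $\m\De$, which is precisely the content of the previous theorem.

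First I would recall from \thmref{th:2} that the Schreier limit set $\O$ coincides $\m$-(mod 0) with the conservative part $\C$ of the action. Combined with the disjoint decomposition $\pt F = \left(\bigsqcup_{h\in H}h\De\right)\sqcup\O$ from \thmref{th:1}, this identifies the dissipative part as $\D=\bigsqcup_{h\in H}h\De$ (mod 0). Since the Hopf decomposition is the partition into a conservative and a completely dissipative part, the action is conservative if and only if $\D$ is trivial, i.e.\ if and only if $\m\D=0$.

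Next I would reduce the condition $\m\D=0$ to the single equation $\m\De=0$. Because the translates $h\De$ are pairwise disjoint (\thmref{th:1}), one has $\m\D=\sum_{h\in H}\m(h\De)$; and since $\m$ is quasi-invariant under the $H$-action (\propref{pr:RN}), each translate $h\De$ is $\m$-null precisely when $\De$ itself is. Hence $\m\D=0$ if and only if $\m\De=0$, the $h=e$ term alone already detecting positivity of the sum.

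Finally I would invoke \thmref{th:5}, according to which the monotone sequence $a_n=|S_X^n|/|S_F^n|=|S_X^n|/\bigl(2m(2m-1)^{n-1}\bigr)$ converges to $\m\De$. Therefore $\m\De=\lim_n|S_X^n|/|S_F^n|$, and chaining the equivalences above shows that the boundary action is conservative if and only if this limit vanishes. I do not expect a genuine obstacle here: all the analytic work is already packaged into \thmref{th:2} and \thmref{th:5}, and the only point requiring a moment's care is the passage from $\m\D=0$ to $\m\De=0$, which rests solely on the disjointness of the translates and on quasi-invariance rather than on any new estimate.
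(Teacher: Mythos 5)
Your argument is correct and is exactly the derivation the paper intends: Theorem~\ref{th:cor3} is stated there as an immediate corollary of Theorem~\ref{th:5}, with the identification of the dissipative part as $\bigsqcup_{h\in H}h\De$ (via Theorems~\ref{th:1} and~\ref{th:2}) and the reduction to $\m\De=0$ left implicit. You have simply spelled out those implicit steps, including the correct observation that quasi-invariance makes $\m\De=0$ equivalent to the nullity of the whole dissipative part.
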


\begin{rem}
In different terms this result is also independently proved in the recent
paper \cite[Theorem 9]{Bahturin-Olshanskii10}.
\end{rem}

\begin{cor} \label{cor:pr5}
If $v_X<2m-1$, then the boundary action of $H$ is conservative.
\end{cor}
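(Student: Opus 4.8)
The plan is to deduce the corollary directly from the conservativity criterion of \thmref{th:cor3}, which reduces the statement to showing that $v_X<2m-1$ forces $|S_X^n|/|S_F^n|\to 0$. Thus the entire task becomes an elementary growth estimate comparing the sphere sizes in the Schreier graph $X$ with those in the Cayley graph of $F$; all the substantive analytic work has already been done in \thmref{th:5} and its corollary \thmref{th:cor3}.

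First I would recall that, by definition, $v_X=\limsup_n |B_X^n|^{1/n}$, and that trivially $|S_X^n|\le |B_X^n|$. Assuming $v_X<2m-1$, I would fix an intermediate number $r$ with $v_X<r<2m-1$. By the definition of the upper limit there is an $N$ such that $|B_X^n|<r^n$ for all $n\ge N$, and hence $|S_X^n|\le |B_X^n|<r^n$ for all such $n$.

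Next I would substitute this bound into the ratio appearing in \thmref{th:cor3}. Since $|S_F^n|=2m(2m-1)^{n-1}$, for every $n\ge N$ one obtains
$$
\frac{|S_X^n|}{|S_F^n|}<\frac{r^n}{2m(2m-1)^{n-1}}=\frac{2m-1}{2m}\left(\frac{r}{2m-1}\right)^n .
$$
Because $r/(2m-1)<1$, the right-hand side tends to $0$ as $n\to\infty$, so $\lim_n |S_X^n|/|S_F^n|=0$, and \thmref{th:cor3} then yields conservativity of the boundary action of $H$.

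There is essentially no obstacle in this argument. The only point that requires a little care is that $v_X$ is defined through a $\limsup$ rather than a genuine limit, which is precisely why one passes to the auxiliary exponent $r$ strictly between $v_X$ and $2m-1$ before comparing the two sphere-growth rates; without this intermediate $r$ one could not convert the $\limsup$ bound into a uniform geometric decay of the ratio $a_n$.
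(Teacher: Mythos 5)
Your proposal is correct and is exactly the intended argument: the paper states this as an immediate consequence of \thmref{th:cor3}, and your passage from the $\limsup$ definition of $v_X$ through an intermediate exponent $r$ to the decay of $|S_X^n|/|S_F^n|$ is the standard deduction the paper leaves to the reader.
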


\begin{rem}
The converse of \corref{cor:pr5} is not true, see
\exref{ex:maxgrowthcons}.
\end{rem}

\begin{rem} \label{rem:s2}
\thmref{th:cor3} can also be reformulated as saying that the
boundary action is conservative if and only if $|B_X^n|/|B_F^n|\to
0$. In the context of Fuchsian groups this result (obtained by
entirely different means) with $B_F^n$ (resp., $|B_X^n|$) replaced
with the area of the $n$-ball in the hyperbolic plane (resp., in the
quotient surface) was proved by Sullivan
\cite[Theorem~IV]{Sullivan81}, and essentially goes back to Hopf
\cite{Hopf39}.
\end{rem}

\thmref{th:4} and \thmref{th:cor3} imply

\begin{cor}
If $|S_X^n|/|S_F^n|\to 0$ (in particular, if $v_X<2m-1$), then
$v_H\ge\sqrt{2m-1}$.
\end{cor}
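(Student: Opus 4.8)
The plan is to obtain this corollary as a direct logical consequence of \thmref{th:4} and \thmref{th:cor3}, combined by contraposition; no substantially new argument is needed beyond these two results, together with an elementary growth comparison to handle the parenthetical clause.

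First I would assume the hypothesis $|S_X^n|/|S_F^n|\to 0$ and apply \thmref{th:cor3} verbatim to conclude that the boundary action of $H$ on $(\pt F,\m)$ is conservative. The crucial elementary point is then that a conservative action on a \emph{probability} space cannot be completely dissipative. Indeed, if the action were completely dissipative there would be a wandering set $W$ with $\pt F=\bigsqcup_{h\in H}hW$ (mod 0); since $\m(\pt F)=1$ and null sets are preserved under the quasi-invariant action, $W$ would necessarily have positive measure. But a wandering set of positive measure fails to be recurrent (for a.e.\ $x\in W$ there is no $h\neq e$ with $hx\in W$, as $W\cap hW=\varnothing$), contradicting conservativity. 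Hence the action is \emph{not} completely dissipative.

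I would then invoke \thmref{th:4} in contrapositive form: since that theorem asserts $v_H<\sqrt{2m-1}\Rightarrow$ complete dissipativity, the failure of complete dissipativity just established forces $v_H\ge\sqrt{2m-1}$, which is precisely the claim. For the parenthetical case $v_X<2m-1$, I would reduce it to the hypothesis already treated: writing $v_X=\limsup_n|B_X^n|^{1/n}<2m-1=v_F$, there is $\e>0$ with $|B_X^n|\le(2m-1-\e)^n$ for all large $n$, so that the ratios
$$
\frac{|S_X^n|}{|S_F^n|}\le\frac{|B_X^n|}{2m(2m-1)^{n-1}}\le\frac{2m-1}{2m}\left(\frac{2m-1-\e}{2m-1}\right)^{n}
$$
tend to $0$ as $n\to\infty$, and the main case applies (alternatively, this case is immediate from \corref{cor:pr5}). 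The only points that require any care are the mutual exclusivity of conservativity and complete dissipativity on a finite measure space and the exponential estimate above; the substantive content is entirely carried by \thmref{th:4} and \thmref{th:cor3}, so I expect no genuine obstacle in the deduction itself.
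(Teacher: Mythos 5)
Your deduction is correct and is exactly the paper's intended argument: the corollary is stated there as an immediate consequence of \thmref{th:4} and \thmref{th:cor3}, via the observation that conservativity (from \thmref{th:cor3}) excludes complete dissipativity, so the contrapositive of \thmref{th:4} gives $v_H\ge\sqrt{2m-1}$, with the parenthetical case reduced to the main one just as you do (or via \corref{cor:pr5}). No difference in approach worth noting.
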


\begin{rem}
Since $v_{F/N}<v_F$ for any non-trivial normal subgroup $N\lhd F$ (e.g., see
\cite{Grigorchuk-delaHarpe01}), as another corollary one obtains a
``quantitative'' proof of \thmref{th:normalcons}.
\end{rem}

\subsection{Examples} \label{sec:exx}

\begin{ex}[counterexample to the converse of \corref{cor:6}] \label{ex:mingrowthdiss}
The group $H$ is infinitely generated, the graph $X$ has no hanging
branches, but $v_H$ is arbitrarily close to 1, so that the action of
$H$ on $(\pt F,\m)$ is completely dissipative (see \thmref{th:4}).
\end{ex}

We construct the graph $X$ inductively by starting from the homogeneous tree
$X_0=T_{2m}$ of degree $2m$ with origin $o$. We shall think of the spheres
$S^R_{X_k}$ centered at $o$ in the graphs~$X_k$ as \emph{levels}, and follow
the perverse tradition according to which trees are allowed to grow downwards,
so that the level 0 (consisting just of the origin) is the highest (actually,
in \figref{fig:ex22} below we shall draw it from left to right). In this
construction we shall need an increasing integer sequence $1=d_1<d_2<\dots$ to
be specified later.

The inductive step of the construction consists in choosing two
points $x_{k+1}\neq y_{k+1}\in S^{d_{k+1}}_{X_k}$ for a certain
integer $d_{k+1}$ to be specified later in such a way that their
``predecessors'' in $S^{d_k}_{X_k}$ are distinct. The graph
$X_{k+1}$ is then obtained from $X_k$ in the following way:
\begin{itemize}
    \item
    remove from $X_k$ one of the branches growing from $x_{k+1}$ downwards;
    \item
    do the same with the point $y_{k+1}$;
    \item
    add the edge $\E_{k+1}$ joining $x_{k+1}$ and $y_{k+1}$.
\end{itemize}
Thus, all the graphs $X_k$ are also $2m$-regular and have the same origin $o$.
Finally, the graph $X$ is the limit of the sequence $X_k$, see
\figref{fig:ex22}.

\begin{figure}[h]
\begin{center}
     \psfrag{o}[rt][rt]{$o$}
     \psfrag{s1}[t][t]{$S^{d_1}$}
     \psfrag{s2}[t][t]{$S^{d_2}$}
     \psfrag{s3}[t][t]{$S^{d_3}$}
     \psfrag{s4}[t][t]{$S^{d_4}$}
     \psfrag{x2}[B][B]{$x_2$}
     \psfrag{x3}[B][B]{$x_3$}
     \psfrag{x4}[B][B]{$x_4$}
     \psfrag{y2}[t][t]{$y_2$}
     \psfrag{y3}[t][t]{$y_3$}
     \psfrag{y4}[t][t]{$y_4$}
     \psfrag{E2}[r][r]{$\E_2$}
     \psfrag{E3}[r][r]{$\E_3$}
     \psfrag{E4}[r][r]{$\E_4$}
          \includegraphics[scale=.75]{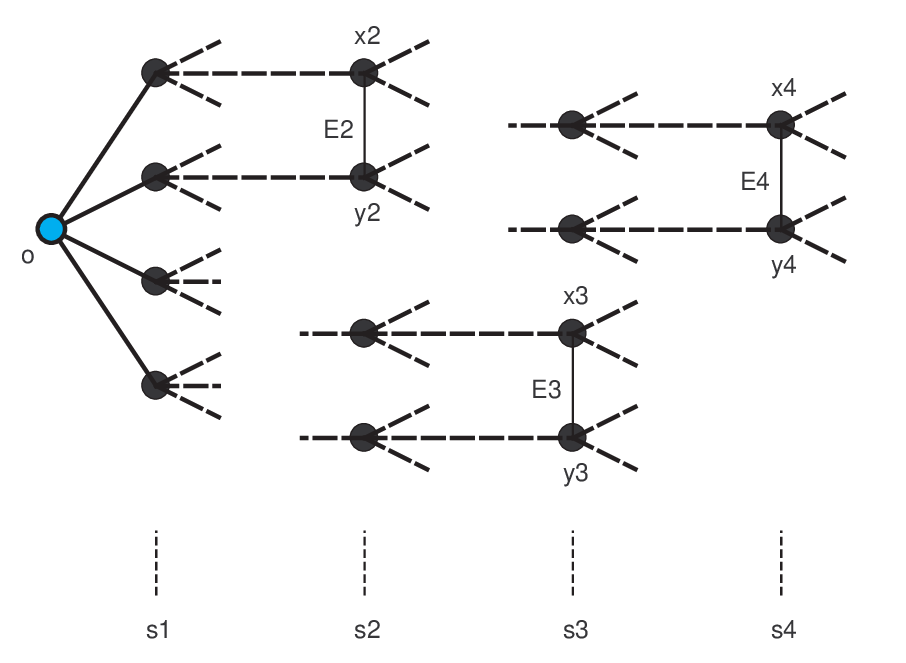}
          \end{center}
          \caption{}
          \label{fig:ex22}
\end{figure}

The graph $X$ has a natural geodesic spanning tree $T$ which is obtained by
removing all the horizontal edges $\E_k$ added during the construction of $X$.
Denote by $s_k$ the corresponding generator of $H$, and let $H_k=\la
s_1,s_2,\dots,s_k\ra$.

Obviously, the sequences of points $x_k,y_k$ can be chosen in such a way that
$X$ has no hanging branches as the latter condition is equivalent to the
property that the intersection of any shadow in the spanning tree $T$ (with
respect to the origin $o$) with the set $\{x_2,y_2,x_3,y_3,\dots\}$ is
non-empty.

\medskip

We shall now explain (once again inductively) how to choose the
sequence $d_k$. More precisely, we shall show that once the numbers
$d_1,d_2,\dots,d_k$ and the group $H_k$ have already been chosen,
and the group $H_k$ has the property that
\begin{equation} \label{eq:gr}
|H_k \cap B_F^n| \le C \a_k^n \qquad\forall\, n\ge 0
\end{equation}
for certain constants $C,\a_k>1$, then for any $\a_{k+1}>\a_k$ the
distance $d_{k+1}$ can be chosen in such a way that for the group
$H_{k+1}$ also
\begin{equation} \label{eq:grr}
|H_{k+1} \cap B_F^n| \le C \a_{k+1}^n \qquad\forall\, n\ge 0
\end{equation}
(with the same constant $C$!). Then, starting from the group
$H_1\cong\Z$ (which has subexponential growth), and taking an
arbitrary sequence
$$
1<\a_1<\a_2<\dots<\a_k\dots\nearrow\a
$$
we would be able to conclude that $v_H\le\a$.

\medskip

For notational simplicity we put $s=s_{k+1}$. Any element of the
group $H_{k+1}=\la H_k,s\ra$ can be presented as
\begin{equation} \label{eq:prod}
g = h_0 s^{\e_1} h_1 s^{\e_2} \dots h_{t-1} s^{\e_t} h_t
\end{equation}
for certain $t\ge 0,\, h_i\in H$ and $\e_i=\pm 1$ such that
$\e_i=\e_{i+1}$ whenever $h_i=e$. As it follows from the definition
of $s$, the length of cancellations on each side between any two
consecutive terms in the above expansion does not exceed $d_k$.
Since $|s|=2d_{k+1}+1$, we obtain the inequality
\begin{equation} \label{eq:est}
|g| \ge \sum_{i=0}^t |h_i| + t|s| - 4t d_k = \sum_{i=0}^t |h_i| + tD
\;,
\end{equation}
where
\begin{equation} \label{eq:D}
D=|s|-4d_k = 2 d_{k+1} + 1 - 4 d_k \;.
\end{equation}

\medskip

We shall now estimate $|H_{k+1}\cap B_F^n|$, i.e., the number of
elements $g$ of the form \eqref{eq:prod} with $|g|\le n$, by using
the inequality \eqref{eq:est}. We have to control the following
numbers:
\begin{itemize}
    \item[(a)]
    The number $t$ of occurrences of $s^{\pm 1}$ in the expansion
    \eqref{eq:prod};
    \item[(b)]
    The number $N_b$ of choices of the signs $\e_i$ for a given value of $t$;
    \item[(c)]
    The number $N_c$ of possible sets of lengths $|h_i|=l_i$ of the words $h_i$ for
    given $t$;
    \item[(d)]
    The number $N_d$ of the choices of the words $h_i\in H_k$ with the
    prescribed lengths $l_i$.
\end{itemize}
Let us find the corresponding estimates one by one.

\medskip

(a) By \eqref{eq:est}
$$
t \le \t = n/D \;.
$$

\medskip

(b) Trivially,
$$
N_b \le 2^\t \;.
$$

\medskip

(c) By \eqref{eq:est}, $N_c$ does not exceed the number of ordered
partitions of $n-tD$ into not more than $t+1$ integer summands, so
that
$$
N_c \le \binom{n-tD+t}{t} \;.
$$

\medskip

(d) Finally, by \eqref{eq:gr} and \eqref{eq:est}
$$
N_d \le C^\t \a_k^n \;.
$$

Thus,
$$
|H_{k+1}\cap B_F^n| \le \t 2^\t  \max_{t\le\t} \binom{n-tD+t}{t}C^\t
\a_k^n \;,
$$
and in order to conclude it is sufficient just to estimate the
$\max$ in the above product. Since
$$
\begin{aligned}
  &\frac1n \log\binom{n-tD+t}{t}
 = \frac1n \log\binom{n-tD'}{t}\\
 &\le \frac1n \log\frac{(n-tD')^t}{(t/e)^t}
 =\frac{t}{n} \left( 1 + \log\left( \frac{n}{t} - D' \right) \right)
 = \frac{1+\log(z-D')}{z} \;,
\end{aligned}
$$
where $D'=D-1$ and $z=n/t\ge D$, we can choose $D$ (equivalently,
$d_{k+1}$, in view of formula \eqref{eq:D}) in such a way that
$$
\sup_{z\ge D} \frac{1+\log(z-D')}{z}
$$
is arbitrarily small, whence the claim.

\begin{ex}[counterexample to the converse of \thmref{th:4}] \label{ex:amendiss}
The cogrowth $v_H=2m-1$ is maximal, i.e., the Schreier graph $X$ is
amenable (see \secref{sec:grcogr}), but the boundary action is
completely dissipative.
\end{ex}

Take the homogeneous tree $T_{2m}$ with the root $o$, remove one of the
branches rooted at~$o$, and replace it with a geodesic ray $\r\cong\Z_+$. Then
attach to all the vertices of $\r$ other than the origin $m-1$ length 1 loops,
so that the resulting graph $X$ is $2m$-regular: it is a union of $2m-1$
hanging branches and the ray $\r$ (with attached loops) joined at the point
$o$, see \figref{fig:ex28}. The graph $X$ is obviously amenable because of the
presence of the~$\r$ branch. On the other hand, the simple random walk on it
eventually stays inside one of the hanging branches (since the simple random
walk on $\Z_+$ is recurrent), whence the claim in view of \corref{cor:RWD}.

\begin{figure}[h]
\begin{center}
     \psfrag{o}[rb][rb]{$o$}
          \includegraphics[scale=.75]{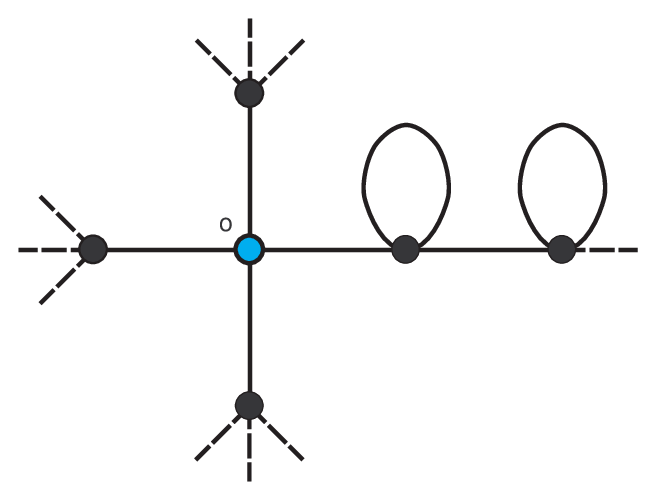}
          \end{center}
          \caption{}
          \label{fig:ex28}
\end{figure}

\begin{ex}[counterexample to the converse of \corref{cor:pr5}] \label{ex:maxgrowthcons}
The growth $v_X=2m-1$ is maximal, but the boundary action is
conservative.
\end{ex}

Take the homogeneous tree $T_{2m}$ with the root $o$ and take an increasing
sequence $d_n$ with density 0, i.e., such that $d_n/n\to\infty$. Then add one
new vertex in the middle of each edge of $T_{2m}$ joining the spheres of radii
$d_n$ and $d_n+1$ and attach to every such vertex $m-1$ length 1 loops. Then
the resulting graph $X$ is radially symmetric, $2m$-regular, and has the
growth $v_X=2m-1$, although it satisfies the condition of \thmref{th:cor3}.
The radial part of this graph is presented on \figref{fig:ex29}.

\begin{figure}[h]
\begin{center}
     \psfrag{o}[rt][rt]{$o$}
     \psfrag{d}[rb][rb]{$d_n$}
     \psfrag{e}[lb][lb]{$d_n+1$}
          \includegraphics[scale=.75]{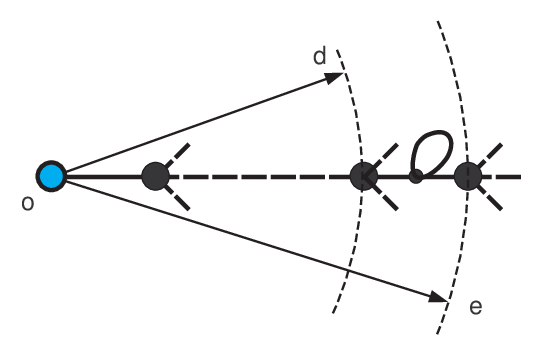}
          \end{center}
          \caption{}
          \label{fig:ex29}
\end{figure}

\begin{ex}[counterexample to an extension of \thmref{th:pr7}] \label{ex:consdiss}
An infinitely generated subgroup $H$ such that its boundary action
has both conservative and dissipative parts of positive measure.
\end{ex}

Take a group $\wt H\le F$ such that the simple random walk on the associated
Schreier graph $\wt X$ is transient and the boundary action of $\wt H$ is
conservative (for instance, this is the case when $\wt H$ is a normal subgroup
with transient quotient, see \thmref{th:normalcons}). Then necessarily $\wt H$
is infinitely generated by \thmref{th:pr7}. Let $\wt\S$ be the system of
generators of $\wt H$ determined by a geodesic spanning tree $\wt T$. Then for
any $s\in\wt\S$ the group $H=\la\S\ra,\,\S=\wt\S\setminus\{s\}$ has the
desired property.

Indeed, by \lemref{lem:smaller} the Schreier graph $X=H\bs F$ has hanging
branches, which implies that the dissipative part of the boundary action of
$H$ is non-trivial (see \corref{cor:6}).

Let us now prove non-triviality of the conservative part. Let
$\E=\E_s\in\Edges(\wt X)\setminus\Edges(\wt T)$ be the edge corresponding to
the generator $s$, and let $A\subset\pt F$ be the set of all points $\o$ such
that the associated path $\wt\pi(\o)$ in $\wt X$ never passes through the edge
$\E$ (in either direction). Since the boundary action of $\wt H$ is
conservative, by \thmref{th:2} for a.e.\ point $\o\in A$ the path $\wt\pi(\o)$
passes nonetheless through infinitely many edges not in~$\wt T$.

Transience of the simple random walk implies that $\m A>0$. Namely, by using
labelling along edges every trajectory $(x_n)$ of the random walk on $X$ lifts
to a trajectory $(g_n)$ of the simple random walk on $F$. Denote by
$g_\infty\in\pt F$ its limit point. Then the set of edges through which the
path $\wt\pi(g_\infty)$ passes is contained in the analogous set for the
original sample path $(x_n)$. Thus, if $(x_n)$ never passes through $\E$, then
the path $\wt\pi(g_\infty)$ also has this property. By transience the
probability of the former event is positive, and since the image of the
measure in the space of sample paths under the above transformation is $\m$,
the claim follows.

Now, for any $\o\in A$ the associated path $\pi(\o)$ in the Schreier graph $X$
passes through the same edges as the path $\wt\pi(\o)$ in $\wt X$ (under the
natural identification described in \lemref{lem:smaller}), i.e., it passes
through the edges which are not in the spanning tree $T$ of $X$ infinitely
many times. Since $\m A>0$, it means that the conservative part of the
boundary action of $H$ is also non-trivial.

\begin{rem} \label{rem:Liouv}
One can also show that in the above construction the ergodic components of the
boundary action of $\wt H$ are in one-to-one correspondence with the ergodic
components of the conservative part of the boundary action of $H$. In
particular, if the boundary action of $\wt H$ is ergodic, then the
conservative part of the boundary action of $H$ is also ergodic. It follows
from the fact that the Poisson boundary of the simple random walk on $\wt X$
does not change after removing the edge $\E$.
\end{rem}

\begin{rem}
In the context of covering Markov operators a similar example with both
conservative and dissipative components in the Poisson boundary was
constructed in \cite{Kaimanovich95}.
\end{rem}

\begin{rem}
In the above example the measure $\m$ of the full limit set $\La$ is
intermediate between 0 and 1. Indeed, $\m\La<1$ because the Schreier graph has
a hanging branch. On the other hand, $\m\La>0$ because the boundary action has
a non-trivial conservative part.
\end{rem}

\section{Associated Markov chains} \label{sec:asso-chains}

As we have already seen, the simple random walk on the Schreier graph plays
the crucial role in understanding the ergodic properties of the boundary
action. In this Section we shall look at two other Markov chains closely
connected with the considered problems.

\subsection{Random walk on edges}

The uniform boundary measure $\m$ can be interpreted as the measure in the
path space of the following Markov chain $(\E_n)$ on the set of oriented edges
of the Schreier graph $X$. Its initial distribution is uniform on the set of
$2m$ edges issued from the origin $o$, and the transition probability from an
arbitrary edge $\E$ is equidistributed on the set of $2m-1$ edges $\E'$ issued
from the endpoint of $\E$ without backtracking (in other words, the labels on
$\E$ and $\E'$ do not cancel). Then \propref{pr:2} and \thmref{th:2} imply

\begin{thm} \label{th:markovedges}
The boundary action of $H$ is conservative (resp., dissipative) if a.e.\
sample path of the chain $\E_n$ visits the set $\Edges(X)\setminus\Edges(T)$
infinitely often (resp., finitely many times).
\end{thm}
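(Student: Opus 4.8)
The plan is to recognize this statement as an immediate consequence of the path-space interpretation of $\m$ combined with \propref{pr:2} and \thmref{th:2}. First I would make precise the assertion, stated just before the theorem, that $\m$ is the law of the chain $(\E_n)$ under the identification $\pi$ of $\pt F\cong\bA^\infty_r$ with $\Paths_o^\infty(X)$. This is a one-line cylinder computation: the initial edge is chosen with probability $1/2m$ and each subsequent non-backtracking edge with probability $1/(2m-1)$, so the probability that the first $n$ edges of a sample path read off a prescribed word $\s(g)$ with $|g|=n$ equals $\tfrac1{2m}\cdot\tfrac1{(2m-1)^{n-1}}$, which is exactly $\m C_g$ by \eqref{eq:cylmeas}. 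Since these cylinder probabilities determine both measures, the law of $(\E_n)$ coincides with $\m$.

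Next I would translate the geometric event into a statement about $\O$. By \propref{pr:2}, under $\pi$ the Schreier limit set $\O$ is precisely the set of infinite non-backtracking paths from $o$ that pass through $\Edges(X)\setminus\Edges(T)$ infinitely often. Hence the event ``the sample path of $(\E_n)$ visits $\Edges(X)\setminus\Edges(T)$ infinitely often'' has probability $\m(\O)$, while its complement ``visits $\Edges(X)\setminus\Edges(T)$ only finitely often'' has probability $\m(\pt F\setminus\O)$.

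Finally I would invoke \thmref{th:2}, which identifies $\O$ (mod $0$) with the conservative part $\C$ of the boundary action, so that $\pt F\setminus\O$ coincides (mod $0$) with the dissipative part $\D$. Conservativity of the action means $\m(\C)=1$, equivalently $\m(\O)=1$, i.e.\ a.e.\ sample path visits $\Edges(X)\setminus\Edges(T)$ infinitely often; complete dissipativity means $\m(\O)=0$, i.e.\ a.e.\ sample path makes only finitely many such visits. This yields both assertions.

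I do not expect a genuine obstacle, since everything reduces to the two cited results; the only point requiring care is the dissipative half. Because the Hopf alternative fails for infinitely generated $H$, the set of infinitely-visiting and the set of finitely-visiting paths may both carry positive $\m$-measure, so the dichotomy is not clean: the sharp equivalence is between \emph{complete} dissipativity and a.e.\ finitely many visits, whereas mere dissipativity (existence of a wandering set of positive measure) corresponds only to $\m(\O)<1$. Thus the statement is best read as the pair of one-directional implications for which the mod-$0$ identifications above suffice.
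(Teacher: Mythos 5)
Your proposal is correct and follows exactly the route the paper intends: the paper gives no separate argument beyond observing that $\m$ is the law of the edge chain and citing \propref{pr:2} together with \thmref{th:2}, which is precisely what you spell out. Your closing remark that the clean equivalence in the dissipative direction is with \emph{complete} dissipativity (since the Hopf alternative can fail for infinitely generated $H$) is a correct and worthwhile precision, but it does not change the argument.
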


\begin{rem}
Deciding whether a given set is visited or not with positive probability by
sample paths of a certain Markov chain is a classical problem in probability
(and potential theory) which goes back to Kakutani. Explicit estimates for the
probabilities of visiting the set infinitely many times or not visiting it at
all are given in terms of various kinds of \emph{capacity}, see
\cite{Benjamini-Pemantle-Peres95}.
\end{rem}

\subsection{Markov chain on cycles}

We shall now assume that the conservative part of the action of $H$
on $(\pt F,\m)$ is non-trivial, i.e., $\m\O>0$. We give its symbolic
interpretation and show that its ergodicity is equivalent to the
Liouville property for a certain naturally associated Markov chain.

Let us endow $\pt H$ with the probability measure $\m_*$ which is the preimage
of the normalized restriction $\m|_\O/\m\O$ with respect to the map
$\s^\infty$ (see \thmref{th:bdrymap}). Obviously, the dynamical systems
$(H,\O,\m|_\O)$ and $(H,\pt H,\m_*)$ are isomorphic (up to the constant
multiplier used to normalize the measure $\m|_\O$). In particular, the action
of $H$ on the space $(\pt H,\m_*)$ is conservative.

Recall that the space $\pt H$, being the set of infinite irreducible words in
the alphabet~$\bS$, is the state space of a topological Markov chain. The
alphabet $\bS$ of this chain is in one-to-one correspondence with a set of
cycles in the Schreier graph $X$ (see \secref{sec:Sch}).

\begin{thm} \label{th:pr10}
The measure $\m_*$ on $\pt H$ is Markov in the alphabet $\bS$. It
corresponds to the initial distribution
$$
\th(s) = \m_* C_s \;, \qquad s\in\bS \;,
$$
and the transition matrix
\begin{equation} \label{eq:M}
M(s,s') = (2m-1)^{|s'|-|ss'|} \cdot \frac{\th(s')}{\th(s)} \;.
\end{equation}
\end{thm}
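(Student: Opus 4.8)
The plan is to compute the $\m_*$-measure of an arbitrary cylinder $C_{s_1\cdots s_n}\subset\pt H$ and to read off both the Markov property and the transition matrix from the resulting expression. By the definition of $\m_*$ as the pull-back of $\m|_\O/\m\O$ under the homeomorphism $\s^\infty$ (\thmref{th:bdrymap}), one has $\m_* C_{s_1\cdots s_n}=\m\bigl(\s^\infty(C_{s_1\cdots s_n})\bigr)/\m\O$, so the task is to evaluate the right-hand side. First I would identify the image set. By \eqref{eq:asymp} every $\o=\s^\infty(\xi)$ with $\xi$ beginning with $s_1\cdots s_n$ begins with the word $w_n=\s_-(s_1)\s_0(s_1)\a(s_1,s_2)\cdots\a(s_{n-1},s_n)\s_0(s_n)\in\bA^*_r$, and conversely; hence, in the language of \propref{pr:2}, $\s^\infty(C_{s_1\cdots s_n})=C_{w_n}\cap\O$, where $C_{w_n}\subset\pt F$ is the cylinder based at $w_n$. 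Geometrically $w_n$ labels the path in $X$ running from $o$ to the terminal vertex $y_n$ of the oriented edge $\E_{s_n}=[x_n,y_n]$, and its \emph{last} edge is precisely $\E_{s_n}$. This last observation is what makes the whole computation work.

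The heart of the argument is a self-similarity (memorylessness) statement for $\m$. Read from the vertex $y_n$ onwards, a non-backtracking path in $C_{w_n}$ is governed by the edge chain $(\E_n)$ of \secref{sec:asso-chains}: because $\m$ assigns equal weight $1/(2m-1)$ to each of the $2m-1$ one-step non-backtracking extensions of any cylinder, conditioning $\m$ on $C_{w_n}$ makes the distribution of the continuation after $y_n$ a function of the incoming oriented edge alone, namely of $\E_{s_n}$. Given $C_{w_n}$ the first $n$ edges from $\Edges(X)\setminus\Edges(T)$ are already fixed, so membership in $\O$ is exactly the event that the continuation meets $\Edges(X)\setminus\Edges(T)$ infinitely often. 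I therefore obtain the factorization $\m\bigl(\s^\infty(C_{s_1\cdots s_n})\bigr)=\m C_{w_n}\cdot p(s_n)$, where $p(s):=\P_{\E_s}[\,\text{infinitely many non-tree edges}\,]$ depends on $s$ alone. Making this conditioning rigorous — checking that the future really is a bona fide function of the last traversed edge, so that the dependence on $s_1,\dots,s_{n-1}$ disappears — is the main point; the rest is bookkeeping.

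Granting the factorization, the ratio $\m_* C_{s_1\cdots s_{n+1}}/\m_* C_{s_1\cdots s_n}=(\m C_{w_{n+1}}/\m C_{w_n})\cdot(p(s_{n+1})/p(s_n))$ depends only on $(s_n,s_{n+1})$, which is the Markov property; combined with $\m_* C_{s_1}=\th(s_1)$ it gives $\m_* C_{s_1\cdots s_n}=\th(s_1)\,M(s_1,s_2)\cdots M(s_{n-1},s_n)$ once the ratio is matched with \eqref{eq:M}. From \eqref{eq:cylmeas} and $|w_{n+1}|-|w_n|=|\a(s_n,s_{n+1})|+1$ I get $\m C_{w_{n+1}}/\m C_{w_n}=(2m-1)^{-(|\a(s_n,s_{n+1})|+1)}$, while the case $n=1$ of the factorization yields $p(s)=\th(s)\cdot 2m(2m-1)^{|\s_-(s)|}\m\O$, whence $p(s_{n+1})/p(s_n)=(2m-1)^{|\s_-(s_{n+1})|-|\s_-(s_n)|}\,\th(s_{n+1})/\th(s_n)$. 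Substituting and invoking the length identity $|s_n s_{n+1}|=|\s_-(s_n)|+|\a(s_n,s_{n+1})|+|\s_+(s_{n+1})|+2$, which is the $n=2$ instance of \lemref{lem:dec} (the displayed expansion there being freely reduced), the powers of $(2m-1)$ collapse to $(2m-1)^{|s_{n+1}|-|s_n s_{n+1}|}$ and the factors $p$ cancel, giving exactly $M(s_n,s_{n+1})$. Finally $M(s,s^{-1})=0$ by the irreducibility constraint on $\pt H$, and $M$ is stochastic since $C_{s_1\cdots s_n}=\bigsqcup_{s'\neq s_n^{-1}}C_{s_1\cdots s_n s'}$ forces $\sum_{s'}M(s_n,s')=1$; together with $\th(s)=\m_* C_s$ this exhibits $\m_*$ as the Markov measure with initial distribution $\th$ and transition matrix $M$.
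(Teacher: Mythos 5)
Your proof is correct, but it takes a genuinely different route from the paper's. The paper works ``from the front'': it observes that the Radon--Nikodym derivative $\frac{ds_1\m_*}{d\m_*}(\xi)$ equals $(2m-1)^{|s_1s_2|-|s_2|}$ --- this comes from the Busemann-cocycle formula of \propref{pr:RN} restricted to $\O$, so the potential of $\m_*$ depends only on the first two letters --- and then telescopes the ratios $\m_* C_{s_2\dots s_n}/\m_* C_{s_1\dots s_n}$ to write $\m_* C_{s_1\dots s_n}$ as a product ending in $\th(s_n)$, comparing the expansions for $n$ and $n+1$ letters to extract \eqref{eq:M}. You instead work ``from the back'': you identify $\s^\infty(C_{s_1\cdots s_n})$ with $C_{w_n}\cap\O$, factor $\m(C_{w_n}\cap\O)=\m C_{w_n}\cdot p(s_n)$ via the Markov property of the non-backtracking edge chain of \secref{sec:asso-chains} (the conditional law of the continuation given $C_{w_n}$ depends only on the last oriented edge $\E_{s_n}$, and membership in $\O$ is a tail event of that continuation), and then compute the one-letter extension ratio directly. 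Both arguments ultimately rest on the same combinatorial input, namely that the expansion \eqref{eq:decprod} is reduced; you invoke it as the length identity $|s_ns_{n+1}|=|\s_-(s_n)|+|\a(s_n,s_{n+1})|+|\s_+(s_{n+1})|+2$, and your exponent bookkeeping does collapse correctly to $(2m-1)^{|s_{n+1}|-|s_ns_{n+1}|}$. What your route buys is the explicit probabilistic formula $\th(s)=p(s)/\bigl(2m(2m-1)^{|\s_-(s)|}\m\O\bigr)$ expressing the initial distribution through hitting probabilities of the edge chain, a byproduct the cocycle argument does not give; what the paper's route buys is that it bypasses the conditioning step entirely by reusing the already-established quasi-invariance machinery (\propref{pr:RN}, \propref{pr:RNOmega}). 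The only point to watch in your version (shared, implicitly, by the paper's, since $\th(s)$ appears in a denominator in \eqref{eq:M}) is the division by $p(s_n)$, which tacitly assumes $\th(s_n)>0$; cylinders violating this are $\m_*$-null and can be discarded.
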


\begin{proof}
The argument basically consists in noticing that, due to the special
properties of the generating set $\S$, the Radon--Nikodym derivative
\begin{equation} \label{eq:*a}
\frac{d\m_*(s_2s_3\dots)}{d\m_*(s_1s_2s_3\dots)} =
\frac{ds_1\m_*}{d\m_*}(\xi) \;, \quad \xi=s_1s_2s_3\dots\in\pt H \;,
\end{equation}
of the measure $\m_*$ is \emph{Markov} in the sense that it depends
on the letters $s_1,s_2$ only. [In the language of symbolic dynamics
this derivative, or, more rigorously, its logarithm, is called the
\emph{potential} of the measure $\m$.] Indeed, by \propref{pr:RN}
and \propref{pr:RNOmega}
$$
\frac{d\m_*(s_2s_3\dots)}{d\m_*(s_1s_2s_3\dots)} =
(2m-1)^{|s_1s_2|-|s_2|} \;,
$$
whence for an arbitrary cylinder set $C_{s_1s_2\dots s_n}$
$$
\frac{\m_* C_{s_2\dots s_n}}{\m_* C_{s_1s_2\dots s_n}} =
(2m-1)^{|s_1s_2|-|s_2|} \;.
$$
Comparing the formula
$$
\begin{aligned}
\m_* C_{s_1s_2\dots s_n} &= \frac{\m_* C_{s_1s_2\dots s_n}}{\m_*
C_{s_2\dots s_n}} \cdot \frac{\m_* C_{s_2\dots s_n}}{\m_*
C_{s_3\dots s_n}} \cdot \dots \cdot \frac{\m_* C_{s_{n-1}s_n}}{\m_*
C_{s_n}} \cdot \m_* C_{s_n} \\
&= (2m-1)^{(|s_2|-|s_1s_2|)+(|s_3|-|s_2s_3|) + \dots
+(|s_n|-|s_{n-1}s_n|)} \cdot \th(s_n)
\end{aligned}
$$
with the analogous expansion for $\m_*C_{s_1s_2\dots s_ns_{n+1}}$ we
get the claim.
\end{proof}

\subsection{Applications to the boundary action}

\begin{thm} \label{th:7}
The following measure spaces are canonically isomorphic:
\begin{itemize}
    \item[(i)]
    The Poisson boundary of the Markov chain on $\bS$ described in
    \thmref{th:pr10};
    \item[(ii)]
    The space of ergodic components of the (one-sided) time shift in the measure
    space $(\pt H,\m_*)$;
    \item[(iii)]
    The space of ergodic components of the action of the group $H$
    on the measure space $(\pt H,\m_*)$.
\end{itemize}
\end{thm}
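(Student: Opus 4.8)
The plan is to prove the two identifications $(\mathrm{i})\cong(\mathrm{ii})$ and $(\mathrm{ii})\cong(\mathrm{iii})$ separately, the second being the geometric heart of the matter and the first a general fact about boundaries of Markov chains. Throughout I regard $\m_*$, by \thmref{th:pr10}, as the measure on the path space $\pt H\cong\bS^\infty_r$ of the Markov chain with alphabet $\bS$, initial distribution $\th$ and transition matrix $M$; the one-sided time shift $T\colon s_1s_2s_3\dots\mapsto s_2s_3\dots$ is quasi-invariant for $\m_*$ (its Radon--Nikodym cocycle was computed in the proof of \thmref{th:pr10}), and the $H$-action on $(\pt H,\m_*)$ is conservative as noted above.

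For $(\mathrm{ii})\cong(\mathrm{iii})$ I would exploit the elementary but crucial observation that the shift is a \emph{piece of the group action}: for $\xi=s_1s_2\dots\in\bS^\infty_r$ one has $T\xi=s_1^{-1}\xi$, since $s_1^{-1}\in\bS\subset H$ and the reduced product $s_1^{-1}\cdot(s_1s_2\dots)$ is exactly $s_2s_3\dots$. Consequently the orbit equivalence relation of $H$ on $\pt H$ \emph{coincides} with the tail equivalence relation generated by $T$ (i.e.\ $\xi\sim\eta$ iff $T^m\xi=T^n\eta$ for some $m,n\ge 0$). One inclusion is immediate from $T\xi=s_1^{-1}\xi$; for the converse, if $\eta=h\xi$ with $h\in\bS^*_r$ of length $k$, then reducing $h\,s_1s_2\dots$ affects only finitely many initial letters, so there exist $m$ and a reduced word $w$ with $\eta=w\,s_{m+1}s_{m+2}\dots$ and $T^{|w|}\eta=T^m\xi$. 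Since $T\xi$ and $\xi$ are always $H$-equivalent, a set $A\subset\pt H$ satisfies $T^{-1}A=A$ if and only if it is saturated for the tail relation, if and only if it is $H$-invariant; these are literal (not merely mod $0$) equalities of $\sigma$-algebras. Passing to measurable hulls mod $0$, the invariant $\sigma$-algebra of the shift and the $H$-invariant $\sigma$-algebra coincide, whence the space of ergodic components of $T$ and the space of ergodic components of the $H$-action are canonically isomorphic.

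For $(\mathrm{i})\cong(\mathrm{ii})$ I would invoke the standard description of the Poisson boundary of a Markov chain as the space of ergodic components of the one-sided shift on its path space (see \cite{Kaimanovich-Vershik83,Kaimanovich95} and the references therein). Concretely, a bounded $M$-harmonic function $f$ on $\bS$ corresponds to the bounded $T$-invariant function $F(\xi)=\lim_n f(s_n)$ on $(\pt H,\m_*)$, with inverse $f(s)=\int F\,d\m_{*,s}$, where $\m_{*,s}$ is the chain started at $s$; the martingale convergence theorem guarantees that $F$ exists $\m_*$-a.e.\ and that $F\circ T=F$. This sets up an isomorphism between $L^\infty$ of the Poisson boundary and $L^\infty$ of the $T$-invariant $\sigma$-algebra, i.e.\ between the Poisson boundary and the space of ergodic components of the shift.

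The step I expect to be the main obstacle is the rigorous application of the Poisson-boundary description in $(\mathrm{i})\cong(\mathrm{ii})$: the chain $(\bS,M,\th)$ is \emph{not} stationary (the initial law $\th$ is not $M$-invariant, consistently with $T$ being only quasi- and not measure-invariant), so one must check that the harmonic/invariant correspondence and the martingale convergence argument go through in this merely quasi-invariant setting, and that conservativity of the $H$-action (hence of the shift, by $(\mathrm{ii})\cong(\mathrm{iii})$) yields a genuinely non-atomic ergodic decomposition with no loss mod $0$. By contrast, the identification $(\mathrm{ii})\cong(\mathrm{iii})$ is purely combinatorial and should require no analytic input beyond the equality of the two $\sigma$-algebras established above.
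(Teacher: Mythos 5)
Your proposal is correct and follows essentially the same route as the paper: the authors prove (i)$\cong$(ii) by appealing to the general theory of Poisson boundaries of Markov chains (citing Kaimanovich's general result that the Poisson boundary is the space of ergodic components of the time shift on the path space), and (ii)$\cong$(iii) by the coincidence of the orbit equivalence relations of the shift and of the $H$-action on $\pt H$, which is exactly your observation that $T\xi=s_1^{-1}\xi$ identifies the grand-orbit relation of $T$ with the $H$-orbit relation. Your write-up merely fleshes out both steps (the martingale correspondence and the two inclusions between the equivalence relations) that the paper leaves implicit.
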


\begin{proof}
The isomorphism of the spaces (i) and (ii) is a general fact from the theory
of Markov chains, e.g., see \cite[Definition~1.5]{Kaimanovich92}, whereas the
isomorphism of the spaces (ii) and (iii) follows from the coincidence of the
orbit equivalence relations of the time shift and of the $H$-action on $\pt
H$.
\end{proof}

\begin{rem}
Of course, \thmref{th:7} remains valid for an arbitrary Markov measure on~$\pt
H$ with the full support.
\end{rem}

\begin{cor} \label{cor:5}
The Schreier graph $X$ is Liouville if and only if the boundary action of~$H$
on $(\pt F,\m)$ is conservative and the Markov chain described in
\thmref{th:pr10} is Liouville.
\end{cor}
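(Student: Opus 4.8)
The plan is to read the claimed equivalence entirely through the dictionary already established between three objects: the simple random walk on $X$, the $H$-action on $(\pt F,\m)$, and the cycle chain of \thmref{th:pr10}. The two structural bridges are \thmref{th:pois}, which says $X$ is Liouville if and only if the action on $(\pt F,\m)$ is ergodic, and \thmref{th:7}, which identifies the ergodic components of the $H$-action on $(\pt H,\m_*)$ with the Poisson boundary of the cycle chain (so the chain is Liouville exactly when that action is ergodic). What remains is to connect the action on $(\pt F,\m)$ with the action on $(\pt H,\m_*)$, and the device that makes this passage legitimate is precisely the hypothesis of conservativity.

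First I would record the elementary observation that, because $\m$ is purely non-atomic and the boundary action is essentially free (as noted in the proof of \thmref{th:2}), ergodicity of this action forces conservativity: the unique ergodic component is then the whole of $(\pt F,\m)$, whose conditional measure is the non-atomic $\m$ itself, so by the ergodic-component description of the Hopf decomposition recalled in \secref{sec:hopf} it lies in the conservative part $\C$. Hence $\C=\pt F$ and, by \thmref{th:2}, $\m\O=1$. This is what upgrades the isomorphism $(H,\O,\m|_\O)\cong(H,\pt H,\m_*)$ recorded before \thmref{th:pr10} — a priori only defined up to the constant $\m\O$ — into a genuine isomorphism of the full probability system $(H,\pt F,\m)$ with $(H,\pt H,\m_*)$.

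For the forward implication I would assume $X$ Liouville. By \thmref{th:pois} the action on $(\pt F,\m)$ is ergodic, hence conservative by the previous step, giving one of the two assertions on the right and yielding $\m\O=1$. Transporting ergodicity along the isomorphism $(H,\pt F,\m)\cong(H,\pt H,\m_*)$, the $H$-action on $(\pt H,\m_*)$ is ergodic, so by \thmref{th:7} the Poisson boundary of the cycle chain is trivial, i.e. the chain is Liouville. For the converse I would assume the boundary action conservative and the chain Liouville; conservativity again gives $\C=\pt F$, hence $\m\O=1$ and $(H,\pt F,\m)\cong(H,\pt H,\m_*)$, while triviality of the chain's Poisson boundary forces, via \thmref{th:7}, ergodicity of the $H$-action on $(\pt H,\m_*)$, which transports back to ergodicity on $(\pt F,\m)$ and hence to Liouvilleness of $X$ by \thmref{th:pois}.

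The proof is thus a matter of chaining existing equivalences, and I expect no deep obstacle. The one point requiring genuine care — which I would treat as the crux — is exactly the bookkeeping around normalization: one must verify that conservativity really does collapse the ``partial'' identification supported on $\O$ (carrying the renormalized measure $\m|_\O/\m\O$) into a measure-class isomorphism of the full spaces, since only with $\m\O=1$ do ergodicity and the Poisson-boundary statements of \thmref{th:7} refer to the same system as the action on all of $(\pt F,\m)$.
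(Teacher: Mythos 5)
Your argument is correct and is exactly the intended derivation: the paper states \corref{cor:5} without a separate proof because it is the straightforward chaining of \thmref{th:pois}, \thmref{th:2} and \thmref{th:7} that you carry out. The one point you single out as the crux --- that conservativity (equivalently, via essential freeness and non-atomicity of $\m$, ergodicity) gives $\m\O=1$ and thus promotes the normalized identification $(H,\O,\m|_\O)\cong(H,\pt H,\m_*)$ to an isomorphism with the full system $(H,\pt F,\m)$ --- is indeed the only step requiring care, and you handle it correctly.
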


\begin{rem}
There are examples when the Markov chain described in
\thmref{th:pr10} is Liouville although the Schreier graph $X$ is
not. They correspond to the situation when the boundary action of
$H$ on $(\pt F,\m)$ has both conservative and dissipative parts, and
the conservative part is ergodic, see \exref{ex:consdiss} and
\remref{rem:Liouv}.
\end{rem}

\begin{rem}
Yet another example of a Markov measure on $\pt H$ is provided by the harmonic
(hitting) measure of a random walk $(H,\mu)$ with $\supp\mu=\bS$. It was known
already to Dynkin and Malyutov \cite{Dynkin-Malutov61} (also see
\cite{Levit-Molchanov71}), that if $H$ is finitely generated, then the sample
paths converge a.e.\ to the boundary $\pt H$, the hitting measure $\l$ is
Markov, and the space $(\pt H,\l)$ is the Poisson boundary of the random walk
$(H,\mu)$. A recent addition to these facts is the observation that the
hitting measure $\l$ is in fact \emph{multiplicative Markov}, and not just
plain Markov \cite{Mairesse05, Mairesse-Matheus07}, i.e., the transition
probability from any $s\in\bS$ is the normalized restriction of the initial
distribution onto the set $\bS\setminus\{s^{-1}\}$ of all letters admissible
from $s$. These results readily generalize to the situation when $H$ is
infinitely generated by using the approximation of $H=\la s_1,s_2,\dots\ra$ by
finitely generated subgroups $H_n=\la s_1,s_2,\dots,s_n\ra$. In particular,
the hitting measure $\l$ on $\pt H$ is just the projective limit of the
hitting measures $\l_n$ on $\pt H_n$; since each of the measures $\l_n$ is
multiplicative Markov, the limit measure $\l$ is also multiplicative Markov.
This fact can be used to show that, generally speaking, the measure $\m_*$ is
not equivalent to the hitting measure of a random walk on $H$. Indeed, the
Markov measure with the transition probabilities \eqref{eq:M} is
multiplicative only if for any $s'\neq s''\in\bS$ the difference
$|ss'|-|ss''|$ is the same for all $s\in\bS$ with $s',s''\neq s^{-1}$, which
is a quite restrictive condition.
\end{rem}

\bibliographystyle{amsalpha}
\bibliography{C:/Sorted/MyTex/mine}

\enddocument

\bye